\def\namedlabel#1#2{\begingroup
    #2%
    \def\@currentlabel{#2}%
    \phantomsection\label{#1}\endgroup
}
\newtheorem{theorem}{Theorem}[section]
\newtheorem*{theorem*}{Theorem}
\newtheorem{proposition}[theorem]{Proposition}
\newtheorem{lemma}[theorem]{Lemma}
\newtheorem{corollary}[theorem]{Corollary}
\newtheorem{fact}[theorem]{Fact}
\newtheorem{question}[theorem]{Question}
\newtheorem{thmx}{Theorem}
\theoremstyle{definition}
\newtheorem{definition}[theorem]{Definition}
\newtheorem{example}[theorem]{Example}
\theoremstyle{remark}
\newtheorem{remark}[theorem]{Remark}
\newtheorem{notation-num}{Notation}
\newtheorem*{notation}{Notation}
\newcommand{\R}{\mathbb{R}}
\newcommand{\F}{\mathcal{F}}
\newcommand{\Scal}{\mathcal{S}}
\newcommand{\hull}{\mathsf{hull}}
\newcommand{\VC}{\mathsf{VC}\text{-}\mathsf{dim}}
\newcommand{\pow}{\mathcal{P}}
\newcommand{\collin}{\mathsf{collin}}
\newcommand{\sm}{\setminus}
\newcommand{\es}{\emptyset}
\newcommand{\sub}{\subseteq}
\newcommand{\CF}{{\mathcal F}}
\newcommand{\Abb}{\ensuremath{\mathbb{A}}}
\newcommand{\Nbb}{\ensuremath{\mathbb{N}}}
\newcommand{\Rbb}{\ensuremath{\mathbb{R}}}
\title{Exact VC-Dimensions of Certain Geometric Set Systems} 
\subjclass[2020]{Primary: 52C35. Secondary: 03C98.}
\keywords{VC-dimension, $k$-fold unions, shatter isomorphisms}
\author[P.E. Eleftheriou] {Pantelis  E. Eleftheriou}
\address{School of Mathematics, University of Leeds, Leeds LS2 9JT, United Kingdom}
\email{\href{mailto:P.Eleftheriou@leeds.ac.uk}{P.Eleftheriou@leeds.ac.uk}}
\author[A. Papadopoulos ]{Aris Papadopoulos}
\address{Department of Mathematics, University of Maryland, College Park}
\email{\href{mailto:aris@umd.edu}{aris@umd.edu}}
\author[F. Westhead ]{Francis Westhead}
\address{Department of Mathematics, University of Maryland, College Park}
\email{\href{mailto:westhead@umd.edu}{westhead@umd.edu}}
\thanks{P.E. was partially supported by an EPSRC Early Career Fellowship (EP/V003291/1). A.P. was partially supported by a Leeds Doctoral Scholarship from the University of Leeds. F.W. was partially supported by an LMS undergraduate research bursary (URB-2022-54), a Dean's Fellowship from the University of Maryland, and NSF grant DMS-2154101.}
\date{\today}
\begin{document}

\begin{abstract} 
The VC-dimension of a family of sets is a measure of its combinatorial complexity used in machine learning theory, computational geometry, and even model theory. Computing the VC-dimension of the $k$-fold union of geometric set systems has been an open and difficult combinatorial problem, dating back to \cite{BEHWM1989}, who ask about the VC-dimension of $k$-fold unions of half-spaces in $\R^d$.

Let $\F_1$ denote the family of all lines in $\R^2$. It is well-known that $\VC(\F_1) = 2$. In this paper, we study the $2$-fold and $3$-fold unions of $\F_1$, denoted $\F_2$ and $\F_3$, respectively. We show that $\VC(\F_2) = 5$ and $\VC(\F_3) = 9$. Moreover, we give complete characterisations of the subsets of $\R^2$ of maximal size that can be shattered by $\F_2$ and $\F_3$, showing they are exactly two and five, respectively, up to isomorphism in the language of the point-line incidence relation.
\end{abstract}

\maketitle

\section{Introduction}\label{sec:intro}

The \emph{Vapnik-Chervonenkis dimension} (\emph{VC-dimension}) of a family of sets is a well-established measure of ``combinatorial complexity'' which has found immense applications in statistical learning theory, machine learning, computational geometry, and model theory. The notion of VC-dimension was introduced in \cite{VapnikChervonenkis1971} as a condition in statistical learning. One of the fundamental results about families with finite VC-dimension is the celebrated \emph{VC-theorem}, which is roughly a uniform version of the weak law of large numbers for families of subsets of a probability space. See \cite[Theorem~$6.6$]{Simon2015} for a modern exposition of the VC-theorem. Independently, and around the same time as Vapnik and Chervonenkis, Shelah introduced in \cite{Shelah1971} a notion of model-theoretic tameness, which he called \emph{NIP} (\emph{Not the Independence Property}). Roughly twenty years later, Laskowski observed, in \cite{Laskowski1992}, that the model-theoretic notion of NIP for a definable family of sets corresponds precisely to the finiteness of its VC-dimension.

In the years following the works of both Vapnik and Chervonenkis, and Shelah, the notion of VC-dimension has found wide application in machine learning. In the seminal paper \cite{BEHWM1989}, the authors argue that the finiteness of VC-dimension is the ``essential condition for distribution-free learning''. On the other hand, in model theory, NIP has turned out to be one of the core notions of tameness and has been the focus of a lot of work in recent years. Simon's book \cite{Simon2015} gives a good account of recent developments in the study of NIP theories.

Let us now formally introduce the notion of VC-dimension. First of all, by a \emph{set system} we mean a pair $(X,\Scal)$, where $X$ is a (possibly infinite) set and $\Scal\subseteq\pow(X)$ is a family of subsets of $X$. Given a set system $(X,\Scal)$ and $Y\subseteq X$, we say $\Scal$ \emph{shatters} $Y$ if for each $Z\subseteq Y$ there is $S\in \mathcal{S}$ such that $S\cap Y= Z$. More generally, we say that some $Z\subseteq Y$ is \emph{isolated} by $\mathcal{S}$ if there is $S\in \mathcal{S}$ such that $S\cap Y=Z$. 
In particular, $Y\subseteq X$ is shattered by $\mathcal{S}$ if and only if every $Z\subseteq Y$ is isolated by $\mathcal{S}$.   The \emph{VC-dimension} of a set system $(X,\mathcal{S})$ is defined to be the largest positive integer $n$ for which there is some $Y\subseteq X$ of size $n$ that is shattered by $\mathcal{S}$, if one exists. In this case, we write $\VC((X, \mathcal{S})) = n$, or more simply, $\VC(\mathcal S)=n$, if $X$ is clear from the context. It could well be the case that for all $n\in\mathbb{N}$, there is some $Y\subseteq X$ of size $n$ which is shattered by $\mathcal{S}$. Then, we say that $\mathcal{S}$ has infinite VC-dimension, and write $\VC(\mathcal{S}) = \infty$. In the context of our paper, all families have (unless otherwise stated) finite VC-dimension. 

In this paper, we are interested in calculating the exact VC-dimension of certain set systems which arise as \emph{$k$-fold unions} of simpler ones. To fix notation, if $(X,\mathcal{S})$ is a set system, then we define the \emph{$k$-fold union of $\mathcal{S}$}, denoted $\mathcal{S}^{\cup k}$, as follows
\[
    \mathcal{S}^{\cup k} := \left\{\bigcup_{i=1}^k S_i : S_i\in\mathcal{S}\right\}.
\]
In particular, $\mathcal{S}\subseteq\mathcal{S}^{\cup k}$.

Let $\F_1$ denote the family of all lines in $\R^2$. To simplify notation, we denote $\F_1^{\cup k}$ by $\F_k$, for $k\in\mathbb{N}_{\geq 1}$. In this notation, our first main result is the following theorem.

\begin{thmx}[\Cref{vc-2-lines,vc-3-lines}]\label{thm:vc-intro}
    $\VC(\F_2)= 5$ and $\VC(\F_3) = 9$.
\end{thmx}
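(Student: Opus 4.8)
The plan is to translate the shattering condition into a purely combinatorial covering problem depending only on the point--line incidence structure of the candidate configuration $Y$. For $Z \sub Y$, call a line $\ell$ \emph{feasible for $Z$} if $\ell \cap Y \sub Z$, and let $\gamma(Z)$ be the least number of feasible lines whose union covers $Z$ (with $\gamma(\es)=0$). Since a line through a single point $p\in Z$ can always be chosen to avoid the finitely many points of $Y\sm Z$, and since any line meeting $Y$ in at least two points is determined by (hence contains all of) the maximal collinear $Y$-set through them, one checks that $Z$ is isolated by $\F_k$ if and only if $\gamma(Z)\le k$; thus $Y$ is shattered by $\F_k$ exactly when $\gamma(Z)\le k$ for every $Z\sub Y$. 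Two consequences drive everything: taking $Z=Y$ shows $Y$ must be covered by at most $k$ lines, and taking $Z$ to be any $(m-1)$-subset of a line carrying $m$ points of $Y$ shows (such a $Z$ excludes a point of that line, forbidding its use, so each of the $k$ available lines meets it in at most one point) that every line carries at most $k+1$ points of $Y$.

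For the lower bounds I would exhibit one shattered configuration of the required size and verify $\gamma(Z)\le k$ for all $Z$ using these structural features. For $\F_2$ and $n=5$ a collinear triple together with two further points in general position works: the only nontrivial constraint is that the two extra points cannot be merged onto the triple's line, and two lines always suffice. For $\F_3$ and $n=9$ I would take three lines in general position meeting pairwise in three points lying in $Y$, with two further private points on each line, so that each line carries $4$ points and $|Y|=3+6=9$. The key fact making this shatter is that any transversal line through points on two different lines of the configuration automatically misses every other point of $Y$; combined with the shared vertices this produces enough feasible lines that a short matching/packing argument gives $\gamma(Z)\le 3$ for all $512$ subsets, which I would organise by the pattern of excluded points.

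For the bound $\VC(\F_2)\le 5$ the reformulation makes the argument clean. If $|Y|=6$ were shattered, then $Y$ is covered by two lines, each carrying at most $3$ points; since two lines sharing one of the six points cover at most $5$, the configuration must be two disjoint collinear triples $\{1,2,3\}$ and $\{4,5,6\}$. Taking $Z=\{1,2,3,4,5\}$, covering $\{1,2,3\}$ without their line is impossible within the remaining budget, so one feasible line is spent on that triple; the points $4,5$ lie on no common feasible line (their line carries the excluded point $6$) and so require two further lines, giving $\gamma(Z)=3>2$, a contradiction.

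The real work is the bound $\VC(\F_3)\le 9$, and this is where I expect the main obstacle. Assuming a shattered $Y$ with $|Y|=10$, the basic bounds force $Y$ onto three lines each carrying at most $4$ points, and a short count shows at most two of the three pairwise intersection points can belong to $Y$, leaving a small list of structural types (no shared point, with distributions $\{4,4,2\}$ or $\{4,3,3\}$; one shared point; two shared points on a common line; and the concurrent case). In each type I would produce a $Z$ with $\gamma(Z)\ge 4$: the generic obstruction deletes one point from each of the three lines, leaving seven points no three of which lie on a feasible line, so that at least $\lceil 7/2\rceil=4$ lines are needed; when vertices are shared one instead uses a ``two full lines plus a split remainder'' subset, as in the $\F_2$ case. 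The delicate and error-prone part --- the crux of the whole theorem --- is certifying $\gamma(Z)\ge 4$ uniformly: extra, non-generic collinearities among the ten points create additional long feasible lines that could in principle rescue a covering, so for each structural type one must choose the excluded points so as to destroy all such long feasible lines simultaneously and argue that the remaining incidences can never conspire to cover the chosen $Z$ with only three lines. Managing this case analysis, rather than any single clever subset, is the principal difficulty.
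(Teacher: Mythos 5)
Your $\gamma$-reformulation and both halves of the $\F_2$ computation are sound and essentially the paper's argument, but both halves of the $\F_3$ computation have genuine gaps, one of them fatal. The lower bound fails outright: the configuration you propose --- three lines forming a triangle with all three vertices in $Y$ and two private points on each side, in otherwise general position --- is \emph{not} shattered by $\F_3$. Take $Z = Y\setminus\{v_{23},p_1\}$, where $v_{23}$ is the vertex $L_2\cap L_3$ and $p_1$ is a private point of $L_1$. Then $|Z|=7$, and all three of $L_1,L_2,L_3$ are infeasible ($L_1$ contains $p_1$; $L_2$ and $L_3$ contain $v_{23}$). By your own ``key fact,'' every other line meets $Y$ in at most two points, so every feasible line covers at most two points of $Z$ and $\gamma(Z)\ge\lceil 7/2\rceil=4$. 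The moral is that a shattered $9$-point set cannot be in general position off its three covering lines: it needs many additional $3$-point collinearities so that when the covering lines are knocked out, long feasible transversals remain. This is exactly what the paper's conditions (\ref{cond:c}) (every point of a $4$-line lies on another $\ge 3$-line) and (\ref{cond:B})/(\ref{cond:C}) (every point lies on exactly two cross-lines, suitably arranged) encode, and why the witnessing examples (\Cref{ex:set1,ex:set2}) are carefully degenerate rather than generic.

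The upper bound $\VC(\F_3)\le 9$ is also not actually proved: you set up a case analysis over structural types of a $10$-point set on three lines and then explicitly defer the crux (certifying $\gamma(Z)\ge 4$ against all possible extra collinearities). The paper avoids this entirely with a short reduction you should look for in your framework: if $|P|=10$ is shattered, then since no line carries $5$ points of $P$ and three lines cover $P$, some line $l$ has $|l\cap P|=4$; for any $A\subseteq P\setminus l$, isolating $A\cup(l\cap P)$ forces $l$ itself to be one of the three lines used (a feasible line other than $l$ meets the four collinear points at most once, so three such lines cannot cover them), whence the remaining two lines isolate $A$ inside $P\setminus l$. Thus the $6$-point set $P\setminus l$ would be shattered by $\F_2$, contradicting $\VC(\F_2)=5$. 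In your notation this is the single clever observation that $\gamma_{P}\bigl(A\cup(l\cap P)\bigr)\le 3$ for all $A$ implies $\gamma_{P\setminus l}(A)\le 2$ for all $A$, and it replaces your entire case analysis.
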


Although the arguments in our paper do not use any heavy mathematical machinery and are essentially just elementary combinatorial arguments, the approach we take towards proving \Cref{thm:vc-intro} is novel. This is because we tackle the problem of calculating the VC-dimension of $\F_2$ and $\F_3$ by axiomatising the necessary and sufficient conditions for a set of points to be shattered by $\F_2$ and $\F_3$, respectively. 

In particular, the only essential geometric property of lines in $\R^2$ we use is the \emph{unique line property}, that is, given any two distinct points $a,b\in\R^2$ there is a unique line passing through both of them. Therefore, the conditions we give in \Cref{sec:vc-2-lines} and \Cref{sec:vc-3-lines} (or possibly adapted versions of these conditions) could be applied in other contexts to compute the VC-dimension of families with the unique line property. In \Cref{sec:higher-arity}, we illustrate this idea by showing that our result transfers essentially to shattering $(n-2)$-dimensional subspaces of $\R^n$ by $(n-1)$-dimensional subspaces (see \Cref{thm:linear-alg}, for a more precise formulation). 

At the heart of our paper is the new notion of a \emph{shatter-isomorphism} (see \Cref{def:shatter-isomorphic} for the precise statement), which we introduce in \Cref{sec:shatter-isomorphism}. This abstract notion is powerful enough to allow us to: (a) compare the complexity of set systems on different sets; and (b) examine the finer structure of shatterable sets. The proofs of the already mentioned results of \Cref{sec:higher-arity} illustrate (a). To illustrate (b), in \Cref{sec:counting}, we use the axiomatisations obtained in Sections \ref{sec:vc-2-lines} and \ref{sec:vc-3-lines} to completely classify (up to shatter-isomorphism) the sets of size $5$ and $9$ which are shattered by $\F_2$ and $\F_3$, respectively. This culminates in the following result.

\begin{thmx}[\Cref{iso2,iso3}]\label{thm:classification-intro}
    There are, up to shatter-isomorphism, exactly two non-isomorphic sets of $5$ points in $\R^2$ that can be shattered by $\F_2$ and exactly five non-isomorphic sets of $9$ points in $\R^2$ that can be shattered by $\F_3$. 
\end{thmx}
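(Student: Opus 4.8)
The plan is to reduce the classification to a finite combinatorial enumeration of \emph{incidence types}, and then carry out that enumeration separately for $\F_2$ and $\F_3$. First I would pin down what shatter-isomorphism means for these particular configurations: since by \Cref{def:shatter-isomorphic} a shatter-isomorphism matches the traces $\{S\cap Y : S\in\F_k\}$ of one shattered set with those of another, and since the trace of $\F_k$ on a finite point set is governed entirely by which sub-tuples of points are collinear (this is exactly the content of the unique line property, as already exploited in the axiomatisations of \Cref{sec:vc-2-lines} and \Cref{sec:vc-3-lines}), two maximal shattered sets are shatter-isomorphic precisely when their induced point-line incidence structures — equivalently, their collinearity patterns — are isomorphic. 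The whole problem then becomes: list, up to isomorphism, the collinearity patterns on $5$ (resp. $9$) points that satisfy the relevant shattering axioms, and check that each is realised by genuine points of $\R^2$.

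For $\F_2$ and $|Y|=5$ this enumeration is short. To isolate $Y$ itself we must cover all five points by two lines, so by pigeonhole some line carries at least three of them; hence $Y$ contains a collinear triple. Conversely, if four or more points lay on a single line $L$, then no member of $\F_2$ could isolate a three-element subset of them: any line through two points of $L$ equals $L$ by the unique line property, while a second line meets $L$ in at most one point, so two lines cannot hit exactly three of the collinear points. Thus $Y$ consists of exactly one collinear triple on a line $L$ together with two further points $p,q\notin L$, and the only remaining freedom is whether the line through $p$ and $q$ passes through one of the three collinear points (it cannot pass through two, as that would force it to equal $L$). These two possibilities have different numbers of collinear triples and are therefore non-isomorphic; verifying both against the axioms of \Cref{sec:vc-2-lines} shows that each is genuinely shattered, which yields the count of two in the statement.

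For $\F_3$ and $|Y|=9$ I would run the same strategy, now covering $Y$ by three lines. Pigeonhole again forces a covering line carrying at least three points, and arguments in the spirit of the four-collinear obstruction above bound how many points a single line may contain and constrain how the three covering lines may meet. The enumeration is then organised by the combinatorial type of the three covering lines — general position, concurrent, containing one or more parallel pairs, or sharing points of $Y$ at their mutual intersections — together with the collinearities these force among the nine points. Imposing the shattering axioms of \Cref{sec:vc-3-lines} on each case prunes the list, and the surviving types, once checked for realizability and pairwise non-isomorphism, number exactly five.

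The main obstacle is this final enumeration. In contrast to the five-point case, where a single binary choice survives the collinearity bounds, the nine-point case admits many a priori collinearity patterns, and the difficulty is threefold: making the case analysis \emph{exhaustive}, so that no admissible pattern is overlooked; checking that each surviving abstract pattern is actually \emph{realisable} by real points and lines, rather than merely consistent with the axioms; and confirming that the five realised types are pairwise \emph{non-isomorphic} in the language of point-line incidence. The axiomatisation of \Cref{sec:vc-3-lines} is precisely what keeps this tractable, since it restricts the admissible collinearity patterns sharply before any coordinate geometry need be attempted.
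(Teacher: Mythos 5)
Your reduction of shatter-isomorphism to isomorphism of collinearity patterns is the same one the paper uses (the predicates for $2$-point lines are determined by pairs, so only the lines through three or more points of $Y$ carry information), and your treatment of the $\F_2$ case is complete and essentially identical to the paper's proof of \Cref{iso2}: a shattered $5$-set is a collinear triple on a line $L$ plus two points off $L$, and the single surviving binary invariant is whether the line through those two points meets the triple.

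The $\F_3$ half, however, contains a genuine gap: you describe the enumeration but do not carry it out, and the enumeration \emph{is} the theorem. The paper's proof of \Cref{iso3} spends several pages deriving, from the axioms, the complete list of lines through at least three points in each admissible configuration, and that derivation is where the number five comes from; asserting that "the surviving types \dots number exactly five" without producing them proves nothing. Moreover, the organizing principle you propose for the case analysis --- the mutual position of the three covering lines (general position, concurrent, parallel pairs, shared points of $Y$) --- is not the right invariant. Whether two covering lines are parallel or meet outside $Y$ is invisible to the incidence structure on $Y$ and so cannot distinguish shatter-isomorphism types; and in every shattered configuration the three covering lines are in fact determined uniquely and pairwise disjoint on $Y$ or overlap in a controlled way, so they contribute almost nothing to the classification. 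The five types are instead distinguished by the pattern of the \emph{additional} lines through three or more points that conditions (\ref{cond:c}) and (\ref{cond:B}) force to exist --- the ordinary $3$-lines attached to a $4$-line in the first case, and the six cross-lines in the second --- and specifically by how these extra lines distribute over the two points of $Y$ lying off the first two covering lines. Your outline never mentions these forced concurrences, so even as a plan it points the case analysis in a direction that would not terminate in the correct list. To repair the argument you would need to split on whether $Y$ contains four collinear points, invoke (\ref{cond:c}) (respectively (\ref{cond:B})) to generate the forced extra lines, and then enumerate their admissible incidence patterns, which is exactly the structure of Claims 1 and 2 in the paper's proof.
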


We also introduce the notion of a \emph{maximal-shattering sequence}, denoted $(s_k(X,\mathcal{S}))_{k\in\mathbb{N}}$, which for a fixed set-system $(X,\mathcal{S})$ of finite VC-dimension counts the number of non-isomorphic (in the sense of shatter-isomorphism) structures of size $\VC(\mathcal{S}^{\cup k})$ which can be shattered by $\mathcal{S}^{\cup k}$. In this notation, \Cref{thm:classification-intro} says that $s_2(\mathcal{F}_1) = 2$ and $s_3(\mathcal{F}_1) = 5$. We leave the computing $s_k(\F_1)$ for $k\geq 4$, and determining general properties of the sequence $(s_k)_{k\in\mathbb{N}}$ as open problems.

\textbf{Structure of the paper.} Our paper is structured as follows. In \Cref{sec:background}, we give a short overview of some results in the VC-dimension of $k$-fold unions of families. In \Cref{sec:shatter-isomorphism}, we briefly introduce the notion of shatter isomorphism and discuss some easy observations. We then move on to our geometric set systems. In \Cref{sec:terminology-notation} we collect the main definitions that are needed to prove \Cref{thm:vc-intro}. Then, in \Cref{sec:vc-2-lines,sec:vc-3-lines}, we calculate the VC-dimension of $\F_2$ and $\F_3$, respectively. In \Cref{sec:counting}, we prove \Cref{thm:classification-intro}. Finally, in \Cref{sec:higher-arity} we deduce higher-dimension analogues of \Cref{thm:vc-intro}.

\section{Brief Historical Remarks}\label{sec:background}

In this paper, we are interested in \emph{exact} values of VC-dimension of $k$-fold unions. That being said, there has, since the late 1980s, been significant work carried out determining the \emph{asymptotic} growth of $\VC(\Scal^{\cup k})$. In \cite{BEHWM1989}, among other results, the authors provide general upper and lower bounds for the VC-dimension of $\mathcal{S}^{\cup k}$, in terms of $k$ and $\VC(\mathcal{S})$. More precisely, they show the following statement.

\begin{fact}[{\cite[Lemma~3.2.3]{BEHWM1989}}]\label{thm:union-bounds}
    Let $k\in\mathbb{N}$ and $(X,\mathcal{S})$ be any set system. Then:
    \[
    \VC(\mathcal{S}^{\cup k}) = O\left(\VC(\mathcal{S})\cdot k\log k\right).
    \]
    Moreover, there are set systems $(X,\mathcal{S})$ such that $\VC(\mathcal{S}^{\cup k}) = \Omega(\VC(\mathcal{S})\cdot k)$.
\end{fact}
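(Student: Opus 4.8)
The statement splits into an upper bound and a lower bound, and I would treat them separately; I expect essentially all of the difficulty to sit in a single arithmetic step of the upper bound, with the lower bound being a one-line explicit example.

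For the upper bound, the plan is the standard double-counting of traces through the Sauer--Shelah lemma. Write $d = \VC(\mathcal{S})$ and suppose $Y \subseteq X$ with $|Y| = n$ is shattered by $\mathcal{S}^{\cup k}$. Every trace $S \cap Y$ of a member $S = S_1 \cup \cdots \cup S_k$ of $\mathcal{S}^{\cup k}$ is the union of the $k$ traces $S_i \cap Y$, so the number of distinct traces that $\mathcal{S}^{\cup k}$ induces on $Y$ is at most $m^k$, where $m$ is the number of distinct traces induced by $\mathcal{S}$ itself. By Sauer--Shelah, $m \le \sum_{i=0}^{d}\binom{n}{i} \le (en/d)^{d}$. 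Since $Y$ is shattered, all $2^n$ subsets of $Y$ occur as traces, so $2^n \le (en/d)^{dk}$, which upon taking logarithms gives
\[
 n \;\le\; dk\,\log_2\!\left(en/d\right).
\]
First I would record this transcendental inequality; the remaining work is to solve it for $n$.

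The hard (or at least only non-routine) part is extracting the clean bound $n = O(dk\log k)$ from $n \le dk\log_2(en/d)$. I would substitute $m := n/d$ to reduce to $m \le k\log_2 e + k\log_2 m$, and then invoke the elementary observation that the function $m \mapsto m/\log_2(em)$ is eventually increasing: one checks that at $m = 2k\log_2 k$ this quantity exceeds $k$ for all large $k$, so the constraint $m/\log_2(em) \le k$ forces $m < 2k\log_2 k$. This yields $n = dm = O(dk\log k)$ with an absolute implied constant, as required.

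For the lower bound it suffices to produce a single family realising $\Omega(\VC(\mathcal{S})\cdot k)$, and here an explicit example does the job with no asymptotics to solve. I would take $X$ to be any set with $|X| \ge dk$ and let $\mathcal{S}$ be the family of all subsets of $X$ of size at most $d$. Then $\VC(\mathcal{S}) = d$: a $d$-set is shattered because each of its subsets already lies in $\mathcal{S}$, whereas no $(d+1)$-set can be shattered, since isolating its own full trace would require a member of $\mathcal{S}$ of size $d+1$. A $k$-fold union of sets of size $\le d$ is precisely an arbitrary set of size $\le dk$ (any such set partitions into at most $k$ blocks of size $\le d$), so $\mathcal{S}^{\cup k}$ is exactly the family of all subsets of size $\le dk$, whence $\VC(\mathcal{S}^{\cup k}) = dk = \VC(\mathcal{S})\cdot k$. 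This gives the required $\Omega(\VC(\mathcal{S})\cdot k)$ and completes the sketch.
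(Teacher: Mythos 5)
This statement is quoted in the paper as a Fact cited from \cite{BEHWM1989}, with no proof given in the paper itself, so there is no in-paper argument to compare against; your proof is correct and is essentially the standard argument from the cited source (bounding the number of traces of $\mathcal{S}^{\cup k}$ on an $n$-point shattered set by $\left(en/d\right)^{dk}$ via Sauer--Shelah and solving $2^n \le \left(en/d\right)^{dk}$, plus the family of all subsets of size at most $d$ for the $\Omega(dk)$ lower bound). Both halves check out; the only cosmetic blemish is that you use $m$ first for the number of traces and then again for $n/d$.
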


Hence, there is a very clear gap between the known lower and upper bounds for $\VC(\mathcal{S}^{k})$ and in \cite{BEHWM1989} the authors ask whether the upper bound they obtain is tight in certain geometric cases. In \cite{Eisenstat2007} it is shown via a probabilistic construction that for all $d\geq 5$ there exist abstract set systems $(X,\mathcal{S})$ such that $\VC(\mathcal{S})=5$, obtaining the upper bound in \Cref{thm:union-bounds}. Moreover, they show that if $\VC(\mathcal{S}) = 1$ then $\VC(\mathcal{S}^{\cup k})$ is bounded above by $k$, which is, of course also tight. The gap for set systems $(X,\mathcal{S})$ with $\VC(\mathcal{S})\in\{2,3,4\}$ was filled in by \cite{Eisenstat2009}, where it is shown that there exist abstract set systems with VC-dimension in $\{2,3,4\}$ witnessing that the upper bound of \Cref{thm:union-bounds} is tight.

The study of the VC-dimension of geometric set systems was initiated in \cite{BEHWM1989}. Recently, in \cite{CMK2019}, the authors prove that in the case of the $k$-fold unions of half spaces in $\R^d$, for any $d\geq 4$ the upper bound from \cite{BEHWM1989} is indeed tight, resolving that problem after thirty years. Their argument makes use of some new results concerning the existence of $\epsilon$-nets (more precisely \cite{Kupavskii2016}), and we refer the reader to \cite{CMK2019} for a discussion of VC-dimension in computational geometry.

\section{The Notion of Shatter Isomorphism}\label{sec:shatter-isomorphism}

In this relatively short section, we introduce the notion of \emph{shatter structures} and \emph{shatter isomorphisms}. We use these notions in \Cref{sec:counting} to classify the number of maximal (in cardinality) sets of points shattered by unions of $2$ and $3$ lines, and in \Cref{sec:higher-arity} to explain how our results on unions of points and lines on the plane can be translated to results about unions of affine subsets of $n$-dimensional vector spaces. The reader who wants to delve directly into the proof of \Cref{thm:classification-intro} can move on to the next section, and consult the terminology introduced below before reading \Cref{sec:counting,sec:higher-arity}.

Let $(X,\mathcal{S})$ be a set system, with $X$ not necessarily finite. Naturally, this induces a set system on any $P\subseteq X$. We are primarily interested in $P\subseteq X$ such that $\Scal$ shatters $P$. To avoid cardinality issues (and trivialities which may arise from them), we want to consider, for any $P\subseteq X$ a ``minimal subfamily'' $\Scal(P)\subseteq \Scal$ such that $(P,\Scal(P))$ captures the combinatorial complexity of $(P,\Scal)$. To this end, we define an equivalence relation $\sim_P$ on $\Scal$ as follows:
\[
S\sim_P S'\text{ if and only if }P\cap S = P\cap S',
\]
for all $S,S'\in \Scal$.

Let $\Scal(P)\subseteq \Scal$ consist of representatives for the equivalence classes in $\Scal/_{\sim_P}$, and let $\mathcal{L}_{\mathcal{S}(P)}$ be the language consisting of a unary predicate $S_i$ for each $S_i\in\mathcal{S}(P)$. We may view $(P,\mathcal{S}(P))$ as a first-order $\mathcal{L}_{\mathcal{S}(P)}$-structure, $\mathcal{P}_\mathcal{S}$, as follows. The domain of $\mathcal{P}_\mathcal{S}$ is $P$, and we interpret each $S\in\mathcal{L}_{\mathcal{S}(P)}$ as follows:
\[
    \mathcal{P}_\Scal\vDash S(x) \text{ if and only if } x\in S.
\]
We call $\mathcal{P}_\mathcal{S}$ the \emph{set-system structure} of $(P,\mathcal{S})$.

\begin{example}\label{ex:lines-points}
    Let $L$ be the set of all lines in $\Rbb^2$ and  $P = \{p_0,\dots,p_{n-1}\}\subseteq \R$. Then, $\mathcal{L}_{L(P)}$ consists of all lines which intersect (at least) two points from $P$ together with, for each $p\in P$ a unique line $l\in L$ such that $l\cap P = \{p\}$.
\end{example}

To simplify exposition, given two set system structures $A_{\Scal}=(A,\mathcal{S})$ and $B_{\Scal'}=(B,\mathcal{S}')$ such that $|\Scal(A)| = |\Scal'(B)|$, that is, $|\mathcal{L}_{\mathcal{S}(A)}| = |\mathcal{L}_{\mathcal{S}'(B)}|$, we may relabel predicates in the two languages, so that $A_{\mathcal{S}}$ and $B_{\mathcal{S}'}$ can be viewed as $\mathcal{L}$-structures, in the same language $\mathcal{L}$. Since we always work up to relabelling of set-system languages, the choice or representatives discussed in the previous paragraph does not affect any of the definitions to come.

This leads us to the main definition of this section.

\begin{definition}\label{def:shatter-isomorphic}
    We say that two set-system structures $(A,\mathcal{S})$ and $(B,\mathcal{S}')$ are \emph{shatter-isomorphic} if $|\mathcal{L}_{\mathcal{S}(A)}| = |\mathcal{L}_{\mathcal{S}'(B)}|$ and there is some relabelling of the predicates in such that $A_{\mathcal{S}}$ and $\mathcal{B}_{\mathcal{S}'}$ are isomorphic, as structures in the same language $\mathcal{L}$.    
\end{definition}

The following remark is immediate.

\begin{remark}
    If $(X_1,\mathcal{S}_1)$ and $(X_2,\mathcal{S}_2)$ are shatter-isomorphic then $\mathcal{S}_1$ shatters $X_1$ if and only if $\mathcal{S}_2$ shatters $X_2$.
\end{remark}

Recall that given a set system $(X,\mathcal{S})$ and an integer $k\in\mathbb{N}$ we write $\mathcal{S}^{\cup k}$ for the family of $k$-fold unions of sets in $\mathcal{S}$. The following is an immediate consequence of the definitions.

\begin{lemma}\label{lem:k-fold-unions}
    Suppose $(X_1,\mathcal{S}_1)$ and $(X_2,\mathcal{S}_2)$ are two shatter-isomorphic set systems. Then, for all $k\in \mathbb{N}$, $X_1$ is shattered by $\mathcal{S}_1^{\cup k}$ if and only if $X_2$ is shattered by $\mathcal{S}_2^{\cup k}$.
\end{lemma}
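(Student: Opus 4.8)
The plan is to reduce the statement to a purely combinatorial fact about the \emph{traces} (the isolated subsets) of the two families, and then to transport these traces across the point-bijection supplied by the shatter-isomorphism. First I would unwind \Cref{def:shatter-isomorphic}: after relabelling predicates so that $\mathcal{S}_1(X_1)$ and $\mathcal{S}_2(X_2)$ are indexed by a common language $\mathcal{L}$, an $\mathcal{L}$-isomorphism of the two set-system structures is a bijection $\phi\colon X_1\to X_2$ together with the induced bijection $S\mapsto\Psi(S)$ of the representative families, satisfying $x\in S\iff\phi(x)\in\Psi(S)$ for all $x\in X_1$ and all $S\in\mathcal{S}_1(X_1)$. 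From this I read off that $\phi(S\cap X_1)=\Psi(S)\cap X_2$, so the induced powerset map $Z\mapsto\phi(Z)$ restricts to a bijection between the subsets of $X_1$ isolated by $\mathcal{S}_1$ and the subsets of $X_2$ isolated by $\mathcal{S}_2$. Here I use that, by the choice of representatives, $\{S\cap X_i:S\in\mathcal{S}_i(X_i)\}=\{S\cap X_i:S\in\mathcal{S}_i\}$ is exactly the collection of subsets of $X_i$ isolated by $\mathcal{S}_i$.

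The second step is the distributive identity
\[
    (S_1\cup\dots\cup S_k)\cap X_i=(S_1\cap X_i)\cup\dots\cup(S_k\cap X_i),
\]
which shows that a subset of $X_i$ is isolated by $\mathcal{S}_i^{\cup k}$ if and only if it is a union of $k$ (not necessarily distinct) subsets each isolated by $\mathcal{S}_i$. In other words, the collection of subsets isolated by $\mathcal{S}_i^{\cup k}$ is obtained from the collection of subsets isolated by $\mathcal{S}_i$ by a fixed, purely set-theoretic operation, namely forming all $k$-fold unions.

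Finally I would combine the two steps. The powerset bijection $Z\mapsto\phi(Z)$ commutes with unions, so it carries $k$-fold unions of $\mathcal{S}_1$-isolated sets to $k$-fold unions of $\mathcal{S}_2$-isolated sets; by the first step it therefore sends the subsets isolated by $\mathcal{S}_1^{\cup k}$ bijectively onto those isolated by $\mathcal{S}_2^{\cup k}$. Since $Z\mapsto\phi(Z)$ is a bijection $\pow(X_1)\to\pow(X_2)$, every subset of $X_1$ is isolated by $\mathcal{S}_1^{\cup k}$ if and only if every subset of $X_2$ is isolated by $\mathcal{S}_2^{\cup k}$; that is, $X_1$ is shattered by $\mathcal{S}_1^{\cup k}$ if and only if $X_2$ is shattered by $\mathcal{S}_2^{\cup k}$, as required.

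I do not expect a genuine obstacle, since the lemma is essentially formal; the only point demanding care is the first step, where one must correctly translate the model-theoretic isomorphism of \Cref{def:shatter-isomorphic} into the trace-preserving point-bijection $\phi$, and verify that the identification of isolated sets with traces of the representative family $\mathcal{S}_i(X_i)$ is insensitive to the choice of representatives. Edge cases such as whether $\es$ is isolated need no separate treatment, as $\phi(\es)=\es$ and $\phi$ preserves the whole collection of isolated sets automatically.
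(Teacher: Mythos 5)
Your proof is correct and is exactly the argument the paper has in mind: the paper states this lemma without proof, calling it ``an immediate consequence of the definitions,'' and your write-up simply makes that immediacy explicit by transporting traces along the point-bijection $\phi$ and noting that traces of $k$-fold unions are $k$-fold unions of traces. No gaps; the one point you flag (independence from the choice of representatives) is indeed the only place requiring care, and the paper itself addresses it in \Cref{sec:shatter-isomorphism}.
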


Fix an infinite set $X$, and let $\mathcal{S}\subseteq \mathcal{P}(X)$ be a family of subsets of $X$ such that $\VC(\mathcal{S})<\infty$. Since $\VC(\mathcal{S})<\infty$, it follows that $\VC(\mathcal{S}^{\cup k})<\infty$ for each $k\in\mathbb{N}$. Let $d_k(X,\mathcal{S}) = \VC(\mathcal{S}^{\cup k})$. We write $s_k(X,\mathcal{S})$ for the number of non-shatter isomorphic set-system structures, $(Y,\mathcal{S})$, such that $Y\subseteq X$ has size $d_k(X,\mathcal{S})$ and $\mathcal{S}^{\cup k}$ shatters $Y$. That is, $s_k(X,\mathcal{S})$ counts the number of isomorphism-types of maximal subsets of $X$ which can be shattered by the $k$-fold union of $\mathcal{S}$. We call $(s_k(X,\Scal))_{k\in\Nbb}$ the \emph{maximal-shattering sequence} of $(X,\Scal)$. When $(X,\mathcal{S})$ is understood, we may write just $d_k$ and $s_k$, in place of $d_k(X,\Scal)$ and $s_k(X,\Scal)$, respectively.

Clearly, given $(X,\mathcal{S})$, $(d_k)_{k\in\mathbb{N}}$ is non-decreasing, since $\mathcal{S}\subseteq\mathcal{S}^{\cup k}$. But it is easy to see that it need not be strictly increasing. 

\begin{example}
    Let $\mathcal{S}=\{(a,b):a<b\in\mathbb{R}\}$, be the family of all open intervals in $\mathbb{R}$. It is an easy exercise to show that $\VC(\mathcal{S}^{\cup k}) = 2k$, for all $k\in\mathbb{N}$. It is also not too hard to see that $s_k = 1$, for all $k\in\mathbb{N}$. Indeed, given $k\in\mathbb{N}$, all sets of size $2k$ can be shattered by $\mathcal{S}^{\cup k}$, and all such sets are shatter-isomorphic since they are isomorphic as linear orders.
\end{example}

\begin{lemma}
    If $\VC(\Scal)<\infty$ and $\Scal$ is closed under arbitrary intersections, then $s_1(X,\Scal)=1$.
\end{lemma}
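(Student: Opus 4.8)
The plan is to show that any subset of $X$ of size $d := \VC(\Scal)$ that is shattered by $\Scal$ carries one and the same set-system structure up to relabelling of predicates, namely the ``complete'' structure on $d$ points, so that there can be only one shatter-isomorphism type. Since $\VC(\Scal)<\infty$, the integer $d_1 = d$ is well-defined, and by the very definition of VC-dimension it is realised by at least one shattered set; hence $s_1(X,\Scal)\ge 1$, and it remains to prove $s_1(X,\Scal)\le 1$.

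The key step is the following observation. Suppose $Y\sub X$ is shattered by $\Scal$ with $|Y|=d$. By definition of shattering, for every $Z\sub Y$ there is some $S\in\Scal$ with $S\cap Y = Z$, while conversely each trace $S\cap Y$ is a subset of $Y$. Hence sending the $\sim_Y$-class of $S$ to the trace $S\cap Y$ is a bijection between $\Scal/_{\sim_Y}$ and $\pow(Y)$. In particular $|\Scal(Y)| = 2^d$, so the language $\mathcal{L}_{\Scal(Y)}$ has exactly $2^d$ unary predicates, one for each $Z\sub Y$, and in the set-system structure $\mathcal{Y}_\Scal$ the predicate indexed by $Z$ holds at a point $p\in Y$ if and only if $p\in Z$. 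Thus $\mathcal{Y}_\Scal$ is determined, up to relabelling of its predicates, by the single parameter $d$.

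With this in hand the isomorphism is immediate. Let $Y,Y'\sub X$ be two shattered sets, each of size $d$, with set-system structures $\mathcal{Y}_\Scal$ and $\mathcal{Y}'_\Scal$. Both languages have $2^d$ predicates, so the cardinality requirement in \Cref{def:shatter-isomorphic} holds. Fix any bijection $f\colon Y\to Y'$, and relabel the predicates so that the predicate of $\mathcal{Y}'_\Scal$ indexed by $f(Z)$ is matched with the predicate of $\mathcal{Y}_\Scal$ indexed by $Z$, for each $Z\sub Y$. Under this relabelling $f$ preserves every predicate: for $p\in Y$ and $Z\sub Y$, the point $p$ satisfies the predicate indexed by $Z$ iff $p\in Z$ iff $f(p)\in f(Z)$ iff $f(p)$ satisfies the predicate indexed by $f(Z)$. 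Hence $f$ is an $\mathcal{L}$-isomorphism, $\mathcal{Y}_\Scal$ and $\mathcal{Y}'_\Scal$ are shatter-isomorphic, and $s_1(X,\Scal)=1$.

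The only point requiring care is the verification that a shattered set realises all $2^d$ traces and carries no auxiliary structure beyond them, as this is exactly what rigidifies the isomorphism type; I would therefore expect this completeness claim, rather than the construction of $f$, to be the crux. It is worth remarking that the argument above appears to use only $\VC(\Scal)<\infty$ and never the closure of $\Scal$ under arbitrary intersections: shattering already supplies every trace directly, so the conclusion $s_1(X,\Scal)=1$ seems to hold for any family of finite VC-dimension. Indeed, the open-interval family preceding the lemma satisfies $s_1 = 1$ although it is not intersection-closed (the intersection of two disjoint intervals is empty). I would accordingly present the closure hypothesis as a convenient but inessential sufficient condition, and base the proof on the completeness of the maximal shattered structure.
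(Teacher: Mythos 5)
Your proof is correct, and it takes a genuinely different --- and in fact more general --- route than the paper's. The paper uses the closure of $\Scal$ under arbitrary intersections to form the $\Scal$-hull of each subset $Y_0$ of a shattered set $Y$; this hull is a canonical representative of the $\sim_Y$-class with trace $Y_0$, and the two languages are then identified hull-by-hull. You bypass canonical representatives entirely: you observe that for a shattered $Y$ the trace map induces a bijection $\Scal/_{\sim_Y}\to\pow(Y)$, so the set-system structure of $(Y,\Scal)$ is the complete structure on $d$ points and is determined by $d$ alone, whence any bijection $Y\to Y'$ lifts to a shatter-isomorphism after the evident relabelling. The two arguments establish the same cardinality and isomorphism facts, but yours uses only $\VC(\Scal)<\infty$, confirming that the intersection-closure hypothesis is merely a convenient sufficient condition; this is consistent with (and strengthens) the paper's own remark immediately following the lemma, which notes via the example of lines in the plane that closure under intersections is not necessary for $s_1=1$. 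The one point worth being explicit about, which you do flag, is that the set-system structure depends only on the traces $S\cap Y$ and not on the particular choice of representatives --- the paper guarantees this by always working up to relabelling of the set-system languages, so your argument goes through as written.
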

\begin{proof}
    Since $\Scal$ is closed under arbitrary intersections, given $Y\subseteq X$, we define the \emph{$\Scal$-hull} of $Y$ to be:
    \[
        \Scal\text{-}\hull(Y) := \bigcap_{S\in\Scal, Y\subseteq S} S.
    \]
    In this case, it is easy to see that if $\Scal$ shatters $Y\subseteq X$, then for any $Y_0\subseteq Y$ we have that $\Scal\text{-}\hull(Y_0)$ is a representative for the equivalence class of all elements $S\in \Scal$ such that $Y\cap S = Y_0$. In particular, $\mathcal{L}_{\Scal(Y)} =  \{\Scal\text{-}\hull(Y_0):Y_0\subseteq Y\}$.
    
    Now, let $d=\VC(\Scal)$ and suppose that $\Scal$ shatters $Y = \{y_1,\dots,y_d\}$ and $Y' = \{y_1',\dots,y_d'\}$. We have that $\mathcal{L}_{\Scal(Y)} = \mathcal{L}_{\Scal(Y')}$, since they both consist of a single representative for the $\Scal$-hull of each subset of $Y$ and $Y'$, respectively. Making appropriate identifications between the points in $Y$ and $Y'$ gives the required result.
\end{proof}

\begin{remark}
    The condition that $\Scal$ is closed under (arbitrary) intersections in the previous lemma is not necessary for $s_1(X,\Scal) = 1$. Indeed, in the notation of \Cref{ex:lines-points}, we have that $s_1(P,L) = 1$, but $L$ is not closed under intersections.
\end{remark}

\begin{question}
    Let $(X,\mathcal{S})$ be a set system with $\VC(X,\mathcal{S})<\infty$. Is it the case that $(s_k)_{k\in\mathbb{N}}$ is non-decreasing? 
\end{question}

\section{Non-Standard Terminology and Notation}\label{sec:terminology-notation}

Having briefly discussed the history of VC-dimension of $k$-fold unions geometric families and introduced some of the abstract machinery that is used later on, we now turn our attention to the main problem we are concerned with. As mentioned in the previous section, we are interested in $k$-fold unions of lines in the plane, for small $k$. Let $\mathcal{F}_1$ denote the family of all lines in $\mathbb{R}^2$.
\[
    \mathcal{F}_1 := \left\{ \{(x,y):y=ax+b\right\} : a,b\in\R\}. 
\]
To fix notation, for the remainder of this paper, let $\mathcal{F}_k = (\mathcal{F}_1)^{\cup k}$. 

The arguments in this paper are essentially elementary and do not make use of any machinery heavier than pure incidence combinatorics. To make the exposition easier we introduce some (non-standard) terminology, which is all contained in this section. Thus, this section functions essentially as a small dictionary for the remainder of the paper, so the reader may skip this section in the first read, and refer back to it accordingly.

\begin{notation}
    Given two distinct points $a, b\in\R^2$, we denote by $l_{a,b}$ the unique line passing through them. Given a set $P\subseteq \R^2$ we write $\Scal_{P}^2 = \{l_{a,b}:a,b\in P, a\neq b\}$ for the set of all lines which go through (at least) two points in $P$. Clearly $|\Scal^2_{P}|\leq \binom{|P|}{2}$, and, in fact we have that $|\Scal^2_{P}| = \binom{|P|}{2}$ if and only if $P$ contains no three collinear points. 
    
    In general, for $n\geq 2$, we write $\Scal_{P}^n$ for the set of all lines $l\subseteq\R^2$ such that $|l\cap P|\geq n$. Let $\Scal_{P} = \{l\in\F_1 : l\cap P\neq \emptyset\}$ for the set of all lines in $\R^2$ that pass through some point in $P$.
\end{notation} 

\begin{definition} 
Let $P\subseteq \R^2$. We refer to elements of $P$ as \emph{$P$-points}. We say that a line $l$ is an \emph{$n$-line with respect to $P$} if $|l\cap P|=n$. When $P$ is clear from the context, we drop the reference to $P$ and speak simply of an \emph{$n$-line}. 
\end{definition}

\begin{definition}
    Let $P\subseteq\R^2$ be a set of points. Given $A\subseteq P$ and distinct $a,b\in A$ we say that $a,b$ \emph{pair inside $A$} if $l_{a,b}\cap P\subseteq A$.
\end{definition}

That is, $a,b\in A\subseteq P$ pair inside $A$ if $l_{a,b}\cap (P\setminus A) =\es$.

\begin{definition}
Let $P\subseteq\R^2$ be a set of points and $\Scal\subseteq\F_1$ a set of lines.
\begin{enumerate}
    \item A line $l\in\F_1$ is a \emph{cross-line with respect to $P$ and $\mathcal{S}$} if for each $l' \in S$ we have that $|l\cap l^\prime\cap P| = 1$. When it is clear from the context, we suppress mention of $P$ and $\mathcal{S}$.
    
    \item If $l\in\F_1$ is a cross-line and $y\in l$ we say $l$ is a \emph{$y$-cross-line}.
    
    \item A point $p\in P$ is an \emph{$n$-node} if $p$ belongs to exactly $n$ cross-lines.
    
    \item $\Scal$ \emph{covers} $P$ if $P\subseteq \bigcup_{l\in \Scal} l$.
\end{enumerate}
\end{definition}

\begin{example}
    
The following diagram depicts a $2$-node as witnessed by two $y$-cross-lines (with respect to the horizontal lines).

\begin{center}
\begin{tikzpicture}[scale=0.6]
\tikzset{dot/.style={fill=black,circle}}
{
\draw (0,0) -- (4,0);

\draw (0,1) -- (4,1);

\draw (0,2) -- (5,2);

\draw[red] (2,0) -- (2,2);

\draw[red] (4,0) -- (0,2);

}

\node[dot] at (0,0){};

\node[dot] at (2,0){};

\node[dot] at (4,0){};

\node[dot] at (0,1){};
\node[above, right] at (2.1,1.3) {$y$};
\node[dot] at (2,1){};

\node[dot] at (4,1){};

\node[dot] at (0,2){};

\node[dot] at (2,2){};

\node[dot] at (5,2){};

\end{tikzpicture} 
\end{center}

\end{example}

\begin{definition}
Let $P\subseteq\R^2$. We define $\collin(P)$ to be the maximum number of distinct collinear points contained in $P$.
\end{definition}

Observe that for any set $P\subseteq R$ of size $|P|\ge 2$ we have that $\collin(P) \geq 2$. 

\begin{definition}
Let $P\subseteq \R^2$ be a set of points and let $l$ be a line. We say that a line $l'$ is an \emph{ordinary $n$-line} with respect to $l$ if $|l'\cap l\cap P|=1$ and $|l'\cap P|=n$. 

Let $O_n(l,P)$ be the set of ordinary $n$-lines with respect to $l$ and $P$. We say that a line $l'$ is an \emph{ordinary $n$-line} if there is $l\in\mathcal{F}_1$ such that $l'\in O_n(l,P)$. In general, we do not specify $P$ when it is clear from context, and, in this case, we use the notation $O_n(l)$.

\end{definition}

We also write $O_{\geq n}(l,P)$ for the set $\bigcup_{i=n}^{\infty}O_i(l,P)$.

\begin{definition}
Let $A, B\subseteq\R^2$ be disjoint subsets of a set $P\subseteq \R^2$. 
\begin{itemize}
    \item A \emph{matching of $A$ and $B$ in $P$} is a set of lines in $\R^2$ whose union contains $A\cup B$, and such that each of the lines contains at most one element from each of $A$ and $B$, and no elements from $P\setminus (A\cup B)$.
    \item Let $a\in A$ and $b\in B$. We say that $(a,b)$ is a \textit{bad pair (with respect to $A$ and $B$, in $P$)} if $l_{a,b}\cap (P\setminus (A\cup B)) \neq \emptyset$, and a \emph{good pair} otherwise.
    \item Given distinct $a, a'\in A$ and $b, b'\in B$, we call $(a, a', b, b')$ a \emph{bad quadruple} if all of $(a, b)$, $(a, b')$, $(a', b)$, $(a', b')$ are bad pairs.
\end{itemize}
\end{definition}

The following lemma is technical but quite indicative of how our arguments are structured.

\begin{lemma}\label{3smatch-withconds}
Let $A=\{a_1, a_2, a_3\}$, $B=\{b_4, b_5, b_6\}$, and $P=A\cup B\cup \{q_1, q_2\}\subseteq \mathbb{R}^2$. Assume that: (1) $a_1,a_2,a_3$ are collinear; (2) $\collin(P)<4$; and (3) there are no bad quadruples in $P$. Then there is a matching of $A$ and $B$ in $P$. 
\end{lemma}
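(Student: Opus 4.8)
The plan is to recast the problem as a bipartite perfect-matching problem and feed the three hypotheses into Hall's marriage condition. I would set $A$ and $B$ as the two sides of a bipartite graph $G$ and join $a_i$ to $b_j$ exactly when $l_{a_i,b_j}$ is eligible to be a matching line, meaning $l_{a_i,b_j}\cap(P\sm(A\cup B))=\es$ (so $(a_i,b_j)$ is a good pair) and $l_{a_i,b_j}$ meets $B$ only in $b_j$. A perfect matching in $G$ then produces three lines, each carrying exactly one point of $A$ and one of $B$ and missing $q_1,q_2$, i.e.\ a matching of $A$ and $B$ in $P$. The constraint that each line meet $A$ at most once is automatic from hypothesis~(1): if $\ell$ is the line on which $a_1,a_2,a_3$ lie, then any line through two of the $a_i$ equals $\ell$, and hypothesis~(2) forces $\ell\cap P=A$, so $\ell$ contains no $b_j$; hence an eligible line meets $A$ in a single point.

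First I would use hypothesis~(2) to bound degrees in the good-pair graph. Fixing $a\in A$, a pair $(a,b)$ is bad only if $b$ lies on $l_{a,q_1}$ or on $l_{a,q_2}$. Each of these lines already contains the two $P$-points $a$ and $q_t$, so by $\collin(P)<4$ it contains at most one further point of $P$, hence at most one point of $B$. Thus $a$ lies in at most two bad pairs and so has at least one good neighbour; the symmetric argument shows each $b\in B$ has a good neighbour. In particular $N(A)=B$, so Hall's condition holds for $S=A$, and it holds trivially for singletons.

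The two-element instance of Hall's condition is precisely where hypothesis~(3) enters. If some distinct $a,a'\in A$ had $N(\{a,a'\})=\{b\}$, then for the remaining $b',b''\in B$ all four of $(a,b'),(a,b''),(a',b'),(a',b'')$ would be bad, a bad quadruple, contradicting~(3). Hence $|N(\{a,a'\})|\ge 2$, Hall's condition holds throughout, and the good-pair graph admits a perfect matching.

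The step I expect to be the main obstacle is upgrading such a good-pair matching to an eligible one, i.e.\ ensuring no matching line passes through a second point of $B$; this is where the collinearity bookkeeping is genuinely needed. If $b_4,b_5,b_6$ are collinear on a line $m$, then $\collin(P)<4$ gives $m\cap P=B$ with no $a_i$ on $m$, so each $l_{a_i,b_j}$ crosses $m$ only at $b_j$ and the good-pair matching is automatically eligible. If $B$ is not collinear, the only obstructions are the at most three lines $l_{b_j,b_k}$, each carrying at most one $a_i$ by $\collin(P)<4$; for such an $a_i$ the two incident pairs are spoiled, and I would either build eligibility directly into $G$ and recheck the marriage condition after deleting these few edges, or exploit the fact that a matching line is allowed to meet only one of $A$ and $B$, covering the offending $a_i$ on a separate line that avoids $q_1,q_2$ and re-matching the rest. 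Pinning down that one of these repairs always succeeds under hypotheses~(1)--(3) is the crux of the argument.
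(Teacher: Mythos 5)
Your Hall's--theorem argument is, in substance, the paper's proof in different packaging: the paper also works with the bipartite good-pair graph, derives the same degree bound (each $a\in A$ lies in at most two bad pairs because each of $l_{a,q_1}$, $l_{a,q_2}$ already carries two $P$-points and hence, by $\collin(P)<4$, at most one point of $B$), and invokes hypothesis (3) exactly where you do, to rule out two elements of $A$ sharing a single good neighbour; it simply verifies the marriage condition by an explicit three-case analysis instead of citing Hall. That part of your proposal is correct and complete.

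Where you diverge is the step you flag as the crux, and there the advice is: stop, rather than finish. The paper takes the three lines $l_{a_i,b_{\sigma(i)}}$ of a good-pair system of distinct representatives as the matching and never checks that such a line meets $B$ only in $b_{\sigma(i)}$. The stronger ``eligible'' matching you are trying to build can genuinely fail under hypotheses (1)--(3): put $a_1,a_2,a_3$ on one line, make $a_1,b_4,b_5$ collinear, place $q_1$ on $l_{a_1,b_6}$ and choose $q_2$ generically. Then $\collin(P)<4$ holds, the only bad pair is $(a_1,b_6)$ so there is no bad quadruple, yet every line through $a_1$ and a point of $B$ contains either $q_1$ or a second point of $B$; so $a_1$ has no eligible partner and neither of your proposed repairs can succeed. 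The resolution is that the extra clause is never used: in the only application of this lemma (\Cref{lem:set1-preparatory}(\ref{3collin-leq6})), all that is needed is three lines, each through one point of $A$, that cover $A\cup B$ and avoid $P\sm(A\cup B)$ --- which your good-pair matching already supplies. Read the definition of matching the way the paper uses it, end your proof at the perfect matching in the good-pair graph, and delete the final paragraph.
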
 

\begin{proof} 
Observe that, since $\collin(P)<4$ and $|P\setminus (A\cup B)|=2$ for every $a\in A$ (respectively, $b\in B$) there are at most two $b\in B$ (respectively, $a\in A$) such that $(a,b)$ is a bad pair. Of course, if each element of $A$ is in at most one bad pair, then it is easy to check that the result follows, because, by the previous observation, we must be able to find a good pair for each $b\in B$. So, we have to consider what happens when elements of $A$ are in more than one bad pair. 

If there are two elements in $A$, say $a_1, a_2$, in two bad pairs each, then there is $b\in B$, such that $(a_1, b), (a_2, b)$ are both bad pairs. But then the previous observation, $(a_3, b)$ is not a bad pair and each of $a_1$ and $a_2$ must be is in a good pair, say $(a_1, b')$ and $(a_2, b'')$, where $b', b''\ne b$. By assumption (3), we must also have $b'\neq b''$ (otherwise, either $(a_1, a_2, b, b')$ or  $(a_1, a_2, b, b'')$ would be a bad quadruple). Hence the lines $l_{a_3, b}$, $l_{a_1, b'}$ and $l_{a_2, b''}$ provide a desired matching. 

Finally, if $a_1\in A$ is the only element of $A$ that is in two bad pairs, say $(a_1, b_4)$ and $(a_1, b_5)$, then the remaining possible bad pairs can be of the form $(a_2, b)$ and $(a_3, b')$, where $b, b'$ cannot both be $b_4$ and cannot both be $b_5$. It follows that for some $b, b'\in \{b_4, b_5\}$, we have that $(a_2, b_4)$ and $(a_3, b_5)$ are both good pairs. Hence the lines $l_{a_1,b_6}$, $l_{a_2, b_4}$ and $l_{a_3,b_5}$ provide a desired matching.
\end{proof}

We also record here the following general lemma which is useful in \Cref{sec:vc-3-lines}.

\begin{lemma} \label{lemma for line uniqueness}
    Fix $m,n\in\Nbb_{\geq1}$. Let $\{L_1,...,L_m\}$ be a set of $m$ distinct lines and $\{x_1,...,x_n\} \sub \cup_{i=1}^m L_i$ a set of $n$ distinct collinear points. If $m<n$, then there is $1\leq i\leq m$ such that $L_i$ is the unique line passing through each point in the set $\{x_1,...,x_n\}$.
\end{lemma}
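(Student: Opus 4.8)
The plan is to argue by contradiction, combining the unique line property with an elementary pigeonhole count. First I would dispose of the bookkeeping: the hypotheses $m \geq 1$ and $m < n$ force $n \geq 2$, so $\{x_1,\dots,x_n\}$ consists of at least two distinct collinear points, and the unique line property supplies a single line $\ell$ containing all of them. The content of the lemma is then exactly the claim that $\ell$ is one of the $L_i$.

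To prove this, I would suppose for contradiction that $L_i \neq \ell$ for every $i \leq m$. The crucial geometric input is that two distinct lines meet in at most one point, so under this assumption each intersection $L_i \cap \ell$ has at most one element. On the other hand, every $x_j$ lies in $\bigcup_{i=1}^m L_i$ and also on $\ell$, so each $x_j$ belongs to $L_i \cap \ell$ for some $i$; hence all $n$ of the points lie in $\bigcup_{i=1}^m (L_i \cap \ell)$, a set containing at most $m$ points. This yields $n \leq m$, contradicting $m < n$, and so some $L_i$ must equal $\ell$, which by construction is the unique line passing through every $x_j$.

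There is no serious obstacle here: the heart of the argument is a single pigeonhole step once the setup is in place. The only points demanding any care are confirming that the unique line $\ell$ genuinely exists (hence that the statement is non-vacuous), which is exactly what $n \geq 2$ guarantees, and phrasing the count so that the bound ``at most one point per line'' is applied correctly to conclude that $m$ lines cannot cover $n > m$ collinear points unless one of those lines is $\ell$ itself.
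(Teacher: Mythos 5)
Your proof is correct and rests on the same pigeonhole observation as the paper's: you argue contrapositively that $m$ lines each meeting $\ell$ in at most one point cannot cover $n>m$ points, while the paper argues directly that some $L_i$ must contain two of the collinear points and hence all of them. These are the same argument in two phrasings, so there is nothing to change.
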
 

\begin{proof}
    Since $n>m$, there is some $i$ with $|L_i \cap \{x_1,...,x_n\}|\geq 2$. This line passes through the entire set since it is a set of collinear points.
\end{proof} 

We end this section with another easy lemma that we make use of often and possibly without reference in the sequel.

\begin{lemma}\label{basic-fact}
Let $n\in\mathbb{N}$.
\begin{enumerate}
    \item \label{no-n+1-collin} No $n+2$ collinear points may be shattered by $\mathcal{F}_n$. More generally, if $\collin(P)\geq n+2$ then $P$ cannot be shattered by $\F_n$.
    \item \label{n-lines} If $P\subseteq\R^2$ is shattered by $\F_n$, then there exist $n$ lines $L_1,\dots,L_n$ such that $P\subseteq \bigcup_{i\leq n}L_i$.
    \item \label{no-2-n+1-disjoint} No two disjoint sets $A$ and $B$,
    each consisting of $n+1$ collinear points,
    can be shattered by $\mathcal{F}_n$. 

\end{enumerate}
\end{lemma}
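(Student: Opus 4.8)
The three parts are of quite different difficulty, so I would treat them separately and spend almost all of the effort on (3).

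For \eqref{n-lines}, I would simply observe that shattering $P$ in particular requires isolating the subset $Z = P$ itself: there is $S \in \F_n$ with $S \cap P = P$, i.e.\ $P \subseteq S$. Since every member of $\F_n$ is by definition a union of $n$ lines $L_1 \cup \dots \cup L_n$, this is exactly the desired conclusion. For \eqref{no-n+1-collin}, the key geometric input is the unique line property: any line other than a fixed line $\ell$ meets $\ell$ in at most one point. Given $n+2$ collinear points $C$ on a line $\ell$, I would try to isolate the subset $Z \subseteq C$ obtained by deleting a single point $c_0$. To cover the $n+1$ points of $Z$ by a union of $n$ lines while avoiding $c_0$, one cannot use $\ell$ itself (it would also cover $c_0$), so every line used meets $\ell$, hence $C$, in at most one point, covering at most $n < n+1$ points of $Z$. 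Thus $Z$ cannot be isolated and $C$ is not shattered. For the ``more generally'' clause I would first record the routine fact that shattering is hereditary: if $\F_n$ shatters $P$ and $C \subseteq P$, then $\F_n$ shatters $C$ (restrict the isolating sets, since $Z \subseteq C$ forces $S \cap C = Z$). Applying this with $C$ a set of $n+2$ collinear points inside $P$ gives the claim.

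For \eqref{no-2-n+1-disjoint}, the real content, suppose towards a contradiction that $\F_n$ shatters $P = A \cup B$, with $A$ lying on a line $\ell_A$ and $B$ on a line $\ell_B$. The argument I have in mind runs in two steps. First, isolating $A$ alone forces one of the $n$ chosen lines to be $\ell_A$ (covering the $n+1$ collinear points of $A$ with $n$ lines is otherwise impossible), and since that line lies inside the isolating set $S$ we get $\ell_A \cap B \subseteq S \cap P \cap B = A \cap B = \es$, that is, $\ell_A \cap B = \es$. Second, and this is the crux, I would isolate the subset $Z = A \cup (B \sm \{b_0\})$ for a single point $b_0 \in B$. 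Covering $A$ again forces $\ell_A$ to be used, and by the first step $\ell_A$ contributes no point of $B$; the remaining $n-1$ lines must therefore cover the $n$ collinear points of $B \sm \{b_0\}$ while avoiding $b_0$, which rules out using $\ell_B$, so they cover at most $n-1 < n$ of them. This contradiction shows $Z$ cannot be isolated, so $P$ is not shattered.

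The main obstacle I anticipate is precisely identifying the witnessing subset $Z$ in \eqref{no-2-n+1-disjoint}: balanced splits with $|Z \cap A| = |Z \cap B| = n$ turn out to be isolable via an easy matching of $A$-points to $B$-points, so the argument genuinely needs the asymmetric subset $A \cup (B \sm \{b_0\})$ together with the preliminary observation $\ell_A \cap B = \es$. I would also dispatch the degenerate possibility $\ell_A = \ell_B$ separately (there $A \cup B$ is a collinear set of size $2n+2 \geq n+2$, so part \eqref{no-n+1-collin} already applies) and check that the argument is insensitive to whether $\ell_A$ and $\ell_B$ are parallel, since it only uses that each line distinct from $\ell_A$ (resp.\ $\ell_B$) meets $A$ (resp.\ $B$) in at most one point.
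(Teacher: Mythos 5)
Your proof is correct. Parts (\ref{no-n+1-collin}) and (\ref{n-lines}) coincide with the paper's argument. For part (\ref{no-2-n+1-disjoint}) you take a mirror-image route: the paper isolates the single subset $(A\setminus\{a_0\})\cup B$, notes that the $n$ isolating lines must each meet $A\setminus\{a_0\}$ in exactly one point while one of them must be $\ell_B$, and concludes that $\ell_B$ together with that point of $A$ yields $n+2$ collinear points in $P$, so that part (\ref{no-n+1-collin}) finishes the job. You instead isolate two subsets --- first $A$ itself, to force $\ell_A$ into the cover and deduce $\ell_A\cap B=\es$, then $A\cup(B\setminus\{b_0\})$, where $\ell_A$ is again forced but now contributes nothing to $B$, leaving only $n-1$ lines for the $n$ collinear points of $B\setminus\{b_0\}$. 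Both arguments rest on the same mechanism (the unique line property plus a pigeonhole count on an asymmetric subset, all of one collinear block and all but one point of the other); the paper's version is one isolation step shorter because it recycles part (\ref{no-n+1-collin}), while yours is self-contained and makes the auxiliary fact $\ell_A\cap B=\es$ explicit. Your separate treatment of the degenerate case $\ell_A=\ell_B$ and your remark that shattering is hereditary are both correct, merely more detail than the paper records.
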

\begin{proof}
For (\ref{no-n+1-collin}), any subset  of size $n+1$  of the $n+2$ collinear points cannot be isolated by $\CF_n$. For (\ref{n-lines}), unless such lines exist, the whole set $P$ cannot be isolated. For (\ref{no-2-n+1-disjoint}), in order to be able to isolate a set consisting of $n$ points from $A$ together with all of $B$, the points of $B$ must be collinear with one point in $A$. Then apply (\ref{no-n+1-collin}).
\end{proof}

\section{The VC-dimension of \texorpdfstring{$\F_2$}{F2}}\label{sec:vc-2-lines}
As an introductory result, we compute $\VC(\mathcal{F}_2) = 5$, and we identify the necessary and sufficient conditions for 
a set $P\subseteq\R^2$ to be shattered by $\F_2$.

We start with the following easy lemma, which intuitively says that given any set of at most four points $P$, such that $\collin(P)<3$, and any two points outside $P$ we can shatter $P$ using lines which avoid these two points. 

\begin{lemma}\label{4avoid2}
Let $P$ be any set of at most four points in $\R^2$, such that no three points in $P$ are collinear, and let $q_1, q_2\in\R^2\setminus P$. Then the family
\[
    \F_2^\prime = \{(l_1\cup l_2)\in\F_2:(l_1\cup l_2)\cap \{q_1, q_2\} = \emptyset\}
\]
shatters $P$. 
\end{lemma}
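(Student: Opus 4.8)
The plan is to verify directly that every subset $Z\subseteq P$ is isolated by some $l_1\cup l_2\in\F_2'$, i.e.\ by a union of two lines, neither of which passes through $q_1$ or $q_2$. I would split the argument according to $|Z|\in\{0,1,2,3,4\}$ and treat the large subsets (which force us to reuse lines through pairs of $P$-points) as the substantive cases. Throughout I would rely on the elementary flexibility principle that, through any fixed point $p$, only finitely many lines meet a prescribed finite set of other points (one ``bad'' direction per target), so one can always route a line through $p$ avoiding any prescribed finite set not containing $p$; and likewise a generic line avoids any prescribed finite set entirely. Since $P\cup\{q_1,q_2\}$ is finite, this principle disposes of the cases $|Z|\le 2$ immediately: for $|Z|=1$ or $2$ pick one free line through each point of $Z$ avoiding $(P\sm Z)\cup\{q_1,q_2\}$, and for $Z=\es$ take two free lines avoiding all of $P\cup\{q_1,q_2\}$ (padding with a throwaway line where fewer than two are needed).

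\textbf{The size-$3$ case.} Suppose $Z=\{p_1,p_2,p_3\}$ (and $p_4$ is the possible fourth point of $P$). Since two lines must cover three points, one line is forced to pass through two of them, hence to be one of the three lines $l_{p_1,p_2},l_{p_1,p_3},l_{p_2,p_3}$; by the no-three-collinear hypothesis each such line meets $P$ in exactly its two defining points, so in particular it misses $p_4$. The key observation is that these three lines pairwise intersect only at $P$-points, so any point lying off $P$ — in particular each $q_i$ — lies on \emph{at most one} of them. Thus $q_1$ and $q_2$ together block at most two of the three candidate lines, leaving one, say $l_{p_i,p_j}$, free of $q_1,q_2$; I would then cover the remaining point $p_k$ with a free line avoiding $(P\sm\{p_k\})\cup\{q_1,q_2\}$, isolating exactly $Z$.

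\textbf{The size-$4$ case (the crux).} Here $Z=P=\{p_1,p_2,p_3,p_4\}$, and two lines each covering exactly two points correspond to the three perfect matchings of $K_4$, namely the three partitions into pairs, with associated pairs of lines $\{l_{p_1,p_2},l_{p_3,p_4}\}$, $\{l_{p_1,p_3},l_{p_2,p_4}\}$, $\{l_{p_1,p_4},l_{p_2,p_3}\}$. The main obstacle is to show that $q_1,q_2$ cannot spoil all three matchings at once, and this is the heart of the proof. I would argue: if some $q_i$ lay on two of the six pair-lines, those two lines would have to correspond to \emph{disjoint} pairs, since two pair-lines sharing a $P$-point meet only at that $P$-point, forcing $q_i\in P$, a contradiction; and two distinct pair-lines through $q_i$ cannot meet three or more pair-lines, since $k$ concurrent pair-lines meeting only at $q_i\notin P$ would contain $2k$ distinct $P$-points, impossible once $2k>4$. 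Hence each $q_i$ lies on at most two pair-lines, and when on two they are exactly the two lines of a \emph{single} matching (disjoint edges of $K_4$ form one perfect matching, and distinct perfect matchings of $K_4$ are edge-disjoint). Consequently each $q_i$ blocks at most one of the three matchings, so $q_1,q_2$ block at most two; a pigeonhole step then yields a matching whose two lines both avoid $q_1$ and $q_2$, and whose union covers $P$ while meeting no further $P$-points (again by no-three-collinear). This isolates $Z=P$.

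\textbf{Wrap-up.} Finally I would note that the cases $|P|<4$ require nothing new: they simply involve a subcollection of the subsets already handled. Having isolated every $Z\subseteq P$ by a member of $\F_2'$, we conclude that $\F_2'$ shatters $P$, as claimed. The only genuinely delicate point is the $K_4$-matching counting in the size-$4$ case; everything else reduces to the flexibility principle plus the remark that distinct pair-lines meet only at $P$-points.
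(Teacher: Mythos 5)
Your proposal is correct, and it is a complete version of what the paper dismisses as ``a straightforward check'' (the paper gives no actual argument for this lemma). The only substantive points --- that the three pair-lines through a $3$-subset meet pairwise only at $P$-points, and that each $q_i$ can block at most one of the three perfect matchings of $K_4$ --- are exactly the right observations, and your pigeonhole step then closes both nontrivial cases.
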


\begin{proof}
This is a straightforward check. 
\end{proof}

\begin{proposition}\label{prop:F_2-shattering}
Let $P\subseteq \R^2$ be a set of $5$ points. Then $P$ is shattered by $\F_2$ if and only if both of the following conditions are satisfied. 
\begin{enumerate}
    \item \label{F_2-shat: cover}  $P$ is covered by two lines, that is, there exist $L_1,L_2\in\F_1$ such that $P = (L_1\cup L_2)\cap P$.
    \item \label{F_2-shat: collin} $P$ does not contain four collinear points.
\end{enumerate}
\end{proposition}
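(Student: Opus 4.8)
The plan is to prove the two directions separately, with necessity essentially immediate and sufficiency requiring a structural dichotomy followed by a case analysis on subsets. For necessity, suppose $\F_2$ shatters $P$. To isolate $Z = P$ we need a union of two lines containing all of $P$, which is precisely condition (\ref{F_2-shat: cover}); this is \Cref{basic-fact}(\ref{n-lines}) with $n = 2$. Condition (\ref{F_2-shat: collin}) follows from \Cref{basic-fact}(\ref{no-n+1-collin}) with $n = 2$, since four collinear points would give $\collin(P) \ge 4 = n+2$ and obstruct shattering.

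For sufficiency, assume (1) and (2). Fix covering lines $L_1, L_2$ and set $P_i = P \cap L_i$. Since no four points are collinear, $|P_i| \le 3$; since $L_1 \ne L_2$ (otherwise all five points would be collinear), $P_1 \cap P_2$ is contained in the single point $L_1 \cap L_2$. Inclusion–exclusion then forces exactly one of two configurations:
\begin{itemize}
\item Case A (a $3+2$ split): $P_1 \cap P_2 = \es$, with $|P_1| = 3$ and $|P_2| = 2$;
\item Case B (a $3+3$ split through a centre): $P_1 \cap P_2 = \{q\}$, with $|P_1| = |P_2| = 3$.
\end{itemize}
The crucial structural observation, proved in either case via the unique line property, is that the \emph{only} collinear triples in $P$ are $P_1$ and $P_2$ themselves; consequently any \emph{cross-line} $l_{a,b}$ joining $a \in P_1 \sm P_2$ to $b \in P_2 \sm P_1$ satisfies $l_{a,b} \cap P = \{a,b\}$.

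With this in hand I would verify shattering by sorting subsets $Z \subseteq P$ according to how they meet the distinguished points. In Case A, writing $P_1 = \{a_1, a_2, a_3\}$: every $Z$ omitting some $a_i$ lies inside the four-point set $\{a_j, a_k\} \cup P_2$, which contains no collinear triple and is therefore isolated while avoiding $a_i$ by \Cref{4avoid2} (taking the second avoided point to be an arbitrary dummy point off $P$); every $Z \supseteq P_1$ is isolated using $L_1$ to cover $P_1$ together with one further line covering $Z \cap P_2$ and avoiding its complement. In Case B, every $Z$ with $q \notin Z$ lies inside the four-point, triple-free set $P \sm \{q\}$ and is isolated avoiding $q$ by \Cref{4avoid2}; for $Z \ni q$ I split on whether $Z$ contains a full spoke ($\{a_1,a_2\}$ or $\{b_1,b_2\}$), in which case that spoke-line covers $q$ and its two neighbours while a second line mops up the remainder, or contains at most one point from each spoke, in which case I place $q$ on its own generic line and cover the at most two remaining points by a clean cross-line or by a second generic line.

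The main obstacle is precisely the last family of subsets in Case B, namely those of the form $\{q\}$, $\{q, a_i\}$, and $\{q, a_i, b_j\}$. Here the natural lines through $q$ and a neighbour are the spokes $L_1, L_2$, which drag in the unwanted third collinear point, so $q$ cannot be paired off and must be isolated on a line of its own; the argument succeeds only because the structural observation guarantees that cross-lines meet $P$ in exactly two points and that a generic line through a single point avoids the finitely many others. The same tension — a partially covered collinear triple cannot be covered by one line — is what makes the bookkeeping in Case A nontrivial, and it is resolved by exactly the same fact.
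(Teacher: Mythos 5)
Your proposal is correct and follows essentially the same route as the paper: necessity via \Cref{basic-fact}, and sufficiency via an elementary case analysis in which the subsets containing no collinear triple are handled by \Cref{4avoid2} and the remaining subsets are covered using the line through their collinear triple. The only difference is organizational — you first classify $P$ into a $3+2$ or $3+3$ configuration and sort subsets by how they meet it, whereas the paper sorts subsets directly by cardinality — but the underlying argument is the same.
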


\begin{proof} 
First, suppose $P$ is shattered by $\F_2$. We must have (\ref{F_2-shat: collin}), by \Cref{basic-fact}(\ref{no-n+1-collin}), and since $P$ is contained in the union of $2$ lines, it is clear that (\ref{F_2-shat: cover}) must hold.

For the other direction, observe that condition (\ref{F_2-shat: cover}) implies that $P$ contains three collinear points. Suppose that $P\subseteq\R^2$ satisfies conditions (\ref{F_2-shat: cover}) and (\ref{F_2-shat: collin}). Given any $A\subseteq P$, we must show that $\F_2$ isolates $A$. Of course, if $|A|\leq 2$ then this is immediate, and, if $A=P$, then condition (\ref{F_2-shat: cover}) ensures that $\F_2$ isolates $A$. So, suppose that $|A|=4$, then either $A$ contains $3$ collinear points, in which case we are done easily by condition (\ref{F_2-shat: cover}), or $A$ contains no sets of three collinear points, in which case, we are done by \Cref{4avoid2}. If $|A|=3$, then either the points in $A$ are collinear, in which case $\F_2$ isolates $A$, or the points in $A$ are not all collinear, in which case the result follows from \Cref{4avoid2} applied to $A$ with $\{q_1,q_2\}=P\setminus A$).
\end{proof}

\begin{corollary}\label{vc-2-lines}
    $\VC(\mathcal{F}_2)=5$.
\end{corollary}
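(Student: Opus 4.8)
The plan is to derive $\VC(\F_2) = 5$ directly from \Cref{prop:F_2-shattering}, which I would treat as the main engine. The proof splits into two halves: establishing the lower bound $\VC(\F_2) \geq 5$ and the upper bound $\VC(\F_2) \leq 5$.

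For the lower bound, I would exhibit a single explicit set $P$ of $5$ points that is shattered by $\F_2$. By \Cref{prop:F_2-shattering}, it suffices to produce $P$ satisfying conditions~(\ref{F_2-shat: cover}) and (\ref{F_2-shat: collin}), that is, five points covered by two lines but with no four collinear. A clean choice is to take three points on a line $L_1$ and two further points on a second line $L_2$ (arranged so that no four of the five are collinear); then $P$ is covered by $L_1 \cup L_2$, giving (\ref{F_2-shat: cover}), and $\collin(P) = 3 < 4$, giving (\ref{F_2-shat: collin}). \Cref{prop:F_2-shattering} then immediately certifies that $\F_2$ shatters this $P$, so $\VC(\F_2) \geq 5$.

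For the upper bound, I need to rule out any shattered set of size $6$. Here I would invoke \Cref{basic-fact}(\ref{n-lines}) with $n = 2$: if a set $P$ of $6$ points were shattered by $\F_2$, there would be two lines $L_1, L_2$ with $P \subseteq L_1 \cup L_2$. By pigeonhole, one of these lines, say $L_1$, contains at least $3$ of the $6$ points, and consequently the other must contain at least $3$ as well, or else $L_1$ carries at least $4$; either way the two lines jointly force some line to contain $\lceil 6/2 \rceil = 3$ points, but more usefully, distributing $6$ points over two lines forces $\collin(P) \geq 3$, and I must push further: to shatter, one must isolate every $5$-element subset. The decisive tool is \Cref{basic-fact}(\ref{no-n+1-collin}) together with a counting/pigeonhole argument showing that covering six points by two lines, where neither line may hold four collinear points (by (\ref{no-n+1-collin}) with $n=2$), is impossible, since two lines each carrying at most $3$ points cover at most $6$ points only in the configuration $3 + 3$, and I would then show a $3+3$ configuration still fails to shatter because some target subset cannot be isolated by a union of two lines.

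The main obstacle is the upper bound: the constraint ``at most three points per covering line'' alone permits a $3+3$ split, so I cannot finish by a pure collinearity count and must argue more carefully that a $3+3$ configuration of six points cannot be shattered by $\F_2$. The cleanest route is to appeal to \Cref{prop:F_2-shattering} in contrapositive spirit — any shattered set of size $\geq 6$ would contain a shattered subset of size $6$ covered by two lines, and I would show directly that isolating all subsets is obstructed. Concretely, I expect to identify a particular subset $A$ of the six points (for instance, one of size $4$ meeting both lines but straddling them so that no two lines can pick out exactly $A$ while excluding the remaining two points) that cannot be isolated, contradicting shattering. This local obstruction argument, rather than any heavy machinery, is where the real work lies; everything else follows formally from the cited results.
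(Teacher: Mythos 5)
Your overall strategy matches the paper's: the lower bound via \Cref{prop:F_2-shattering} applied to a $3+2$ configuration, and the upper bound by reducing a hypothetical shattered $6$-point set to a $3+3$ configuration using \Cref{basic-fact}(\ref{n-lines}) and \Cref{basic-fact}(\ref{no-n+1-collin}). However, there is a genuine gap at the decisive final step. You write that you ``expect to identify a particular subset $A$'' that cannot be isolated, and the candidate you float --- a set of size $4$ meeting both lines --- would not work: if $P = \{a_1,a_2,a_3\} \cup \{b_1,b_2,b_3\}$ with each triple collinear, then $\{a_1,a_2,b_1,b_2\}$ is typically isolated by the two lines $l_{a_1,b_1}$ and $l_{a_2,b_2}$, so size-$4$ subsets straddling the two lines are not the obstruction. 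The correct obstruction is a $5$-element subset such as $\{a_1,a_2\} \cup \{b_1,b_2,b_3\}$: any two lines covering these five points must include a line through at least two of $b_1,b_2,b_3$ (hence through all three, and this line cannot also meet $\{a_1,a_2\}$ without creating four collinear points, forbidden by \Cref{basic-fact}(\ref{no-n+1-collin})), leaving a single line that must pass through $a_1$ and $a_2$ but not $a_3$ --- impossible since $a_1,a_2,a_3$ are collinear. This is precisely \Cref{basic-fact}(\ref{no-2-n+1-disjoint}) with $n=2$, which the paper invokes to close the argument in one line; it was available to you alongside the two parts of that lemma you did cite, and invoking it (or reproducing its short proof) is what your proposal is missing.
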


\begin{proof}
First, it is easy to see that there are sets of $5$ points satisfying the conditions of \Cref{prop:F_2-shattering}, and hence $\VC(\F)\geq 5$. For the other inequality, suppose that $\F_2$ shatters a set $P$ of size 6. Then, by \Cref{basic-fact}(\ref{no-n+1-collin}) $P$ must consist of two disjoint sets of three collinear points, but then by \Cref{basic-fact}(\ref{no-2-n+1-disjoint}) it is clear that $\F_2$ cannot shatter $P$, hence $\VC(\F_2)\leq 5$, and the result follows.
\end{proof}

\section{The VC-dimension of \texorpdfstring{$\F_3$}{F3}}\label{sec:vc-3-lines}

We now turn our attention to $\F_3$. We start with the following easy proposition.

\begin{proposition}
    $\VC(\F_3)\leq 9$.
\end{proposition}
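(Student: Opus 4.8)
The plan is to show that $\F_3$ cannot shatter any set of $10$ points, which gives $\VC(\F_3) \leq 9$. The natural strategy is to assume, for contradiction, that some $P \subseteq \R^2$ with $|P| = 10$ is shattered by $\F_3$, and to derive a constraint that is impossible to satisfy. The key leverage comes from \Cref{basic-fact}: first, part (\ref{no-n+1-collin}) tells us that $\collin(P) \leq 4$, so no line meets $P$ in more than four points; second, part (\ref{n-lines}) tells us that $P$ itself must be coverable by three lines $L_1, L_2, L_3$. Since $|P| = 10$ and each $L_i$ covers at most $\collin(P) \leq 4$ points, three lines cover at most $12$ points, so this is feasible \emph{a priori} — the covering constraint alone does not yield a contradiction, which is why we must exploit the full shattering requirement more carefully.

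The heart of the argument should be a counting/isolation obstruction analogous to \Cref{basic-fact}(\ref{no-2-n+1-disjoint}), but adapted to $\F_3$. First I would argue that to cover all $10$ points with three lines while keeping $\collin(P) \leq 4$, the line-sizes $|L_i \cap P|$ must be fairly large (e.g. a partition like $4+4+2$ or $4+3+3$ of a set of size $10$, after accounting for possible overlaps at intersection points). I would then pick a ``hard'' subset $A \subseteq P$ to isolate and count how many lines of $\F_1$ are needed. The cleanest obstruction: to isolate $A = P \setminus \{p\}$ for a single point $p$, one needs three lines whose union contains all nine points of $A$ but misses $p$. The difficulty is that removing $p$ from a near-optimal three-line cover forces at least one $L_i$ to be ``broken'' and replaced by lines covering strictly fewer points each, and a careful count of the remaining collinearity structure should show that nine points in such a configuration cannot be covered by three lines avoiding $p$. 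I expect the main obstacle to be organizing the case analysis on how the three covering lines intersect $P$ (the sizes of $L_i \cap P$ and their pairwise intersections inside $P$) so that the isolation failure is forced uniformly rather than case-by-case.

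An alternative, and likely cleaner, route is a direct double-counting argument on the shatter function. Since $P$ is shattered, all $2^{10} = 1024$ subsets of $P$ must be isolated by distinct members of $\F_3$. Each element of $\F_3$ is a union of three lines, and each line meets $P$ in at most $\collin(P) \leq 4$ points, so any single $S \in \F_3$ satisfies $|S \cap P| \leq 12$, hence $|S \cap P| \leq 10$. This crude bound is not restrictive, so instead I would count the number of \emph{distinct traces} $(L_1 \cup L_2 \cup L_3) \cap P$ that three lines can produce. The number of distinct lines meeting $P$ in at least two points is at most $\binom{10}{2} = 45$, plus the $1$-lines through single points; bounding the number of realizable triples of traces against the required $1024$ should, after incorporating the collinearity cap, fall short. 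I would compare whichever of these two counts is tighter against $1024$.

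The step I expect to be hardest is making the combinatorial bound sharp enough: the trace-counting approach risks overcounting (many triples of lines yield the same trace, and lines through single points inflate the count), so the argument must be pruned to count only traces that could arise, using $\collin(P) \leq 4$ to limit how the points distribute across lines. If the global count is too loose, I would fall back on the localized obstruction of the previous paragraph, pinning down the forced cover type ($4+4+2$ or $4+3+3$) and exhibiting one specific subset that cannot be isolated. In either case, the essential inputs are exactly the two parts of \Cref{basic-fact} already proved, together with the elementary fact that three lines cover at most $3\collin(P)$ points of $P$.
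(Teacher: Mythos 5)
Your setup is correct and matches the paper's: assume $|P|=10$ is shattered by $\F_3$, invoke \Cref{basic-fact}(\ref{no-n+1-collin}) to get $\collin(P)\leq 4$ and \Cref{basic-fact}(\ref{n-lines}) to cover $P$ by three lines, and conclude by pigeonhole that some line $l$ satisfies $|l\cap P|=4$. But from there the proposal stalls: neither of your two proposed routes closes the argument, and the one idea that does close it is missing. The paper's key move is to consider the family of subsets $\{A\cup(l\cap P):A\subseteq P\setminus l\}$, i.e.\ the subsets that contain \emph{all four} collinear points of $l$. Any three lines isolating such a subset must cover four collinear points, so (by \Cref{lemma for line uniqueness}) one of the three lines must be $l$ itself; the other two lines then have to isolate the arbitrary $A\subseteq P\setminus l$ inside the six-point set $P\setminus l$. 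That says $\F_2$ shatters a set of size $6$, contradicting \Cref{vc-2-lines}. This reduction to the already-proved $\VC(\F_2)=5$ is the whole proof, and it is the idea your proposal does not contain.

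Your two fallback strategies each have a concrete defect. The subset $P\setminus\{p\}$ is the wrong ``hard'' subset: for many configurations it \emph{can} be isolated (e.g.\ when $p$ lies on a $3$-line whose other two points sit on other covering lines), so you are forced into exactly the open-ended case analysis you flag as the obstacle, with no guarantee it terminates in a uniform contradiction. The trace-counting route cannot work even in principle: the number of candidate line-traces is at most $\binom{10}{2}+10+1=56$, so the number of triples is on the order of $\binom{56}{3}\approx 2.8\times 10^4$, which vastly exceeds $2^{10}=1024$; since the achievable traces form a subset of $\pow(P)$ anyway, no cardinality bound of this kind can show that some subset is unreachable --- you must exhibit a structurally non-isolatable subset, which returns you to the argument above. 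So the proposal identifies the right preliminary facts but has a genuine gap where the reduction to $\VC(\F_2)$ should be.
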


\begin{proof}
Suppose for a contradiction that there is some $P\subseteq\R^2$ with $|P|=10$, which is shattered by $\F_3$. By \Cref{basic-fact}(\ref{no-n+1-collin}), $P$ cannot contain five collinear points and, thus must be contained entirely on three distinct lines. This implies, in particular, that there is a line $l$ with $|l\cap P|=4$. But then each element of $\{A \cup (l\cap P):A\subseteq (P\setminus l)\}$ is isolated by a union of three lines, and clearly, $l$ must be one of them. This implies that $P\setminus l$ is shattered by two lines, contradicting \Cref{vc-2-lines}.
\end{proof}

Hence, we may now focus on sets $P\subseteq\R^2$ such that $|P| \leq 9$. The aim of this section is to prove that such a $P\subseteq\R^2$ is shattered by $\F_3$ if and only if $P$ satisfies 

\begin{itemize}
    \item[(\namedlabel{cond:0}{O})]  $P$ is covered by at most three lines.
\end{itemize}  
and one of two collections of additional axioms listed under Case A and Case B in the text below.

For the remainder of this section, we always assume that $P\subseteq \Rbb^2$ consists of at most $9$ points and satisfies (\ref{cond:0}). We proceed by dedicating a subsection to each of the additional sets of axioms. 

Let $m_P:= \min \{|C|\in \mathbb{N}: C\subseteq \F_1, C$ covers $P\}$. In this notation, Condition (\ref{cond:0}) says $m_P\leq 3$. Condition (\ref{cond:0}) is necessary for any set $P\subseteq\mathbb{R}^2$ to be shattered by $\mathcal{F}_3$, so we assume it throughout.

\subsection*{Case A: There are 4 collinear points in \texorpdfstring{$P$}{P}} Here, we  prove that a set $P\sub \R^2$ of $9$ points satisfying condition (\ref{cond:0}) and containing $4$ collinear points is shattered by $\F_3$ if and only if the following two conditions hold. 

\begin{itemize}
    \item[(\namedlabel{cond:b}{A1})] For any $A\subseteq P$ with $|A|=4$ there are distinct $a,a'\in A$ that pair inside $A$.
    \item[(\namedlabel{cond:c}{A2})] For any line $l$ with $|l\cap P|\geq 4$ and $x\in l\cap P$, there is at least one other line $l'$ containing $x$ such that $|l'\cap P|\geq 3$.
\end{itemize}

\begin{example}\label{ex:set1}
    The following set of nine points in $\R^2$ contains $4$ collinear points and satisfies $\{$(\ref{cond:0}),(\ref{cond:b}),(\ref{cond:c})$\}$.
    \begin{center}
    \begin{tikzpicture}[scale=0.6]
\tikzset{dot/.style={fill=black,circle}}
{
\draw (3,3) -- (7,3);
\node at (1.6,3) {};
\draw (2,2) -- (8,2);
\node at (0.6,2) {};
\draw (2,2) -- (4,4);
\node at (2,1.4) {};
\draw (4,2) -- (6,4);
\node at (4,1.4) {};
\draw (6,2) -- (4,4);
\node at (6,1.4) {};
\draw (8,2) -- (6,4);
\node at (8,1.4) {};
\draw (4,4) -- (6,4);
}

\node[above, right] at (2.3,2.2) {};
\node[dot] at (2,2){};
\node[above, right] at (4.3,2.2) {};
\node[dot] at (4,2){};
\node[above, right] at (6.3,2.2) {};
\node[dot] at (6,2){};
\node[above, right] at (8.3,2.2) {};
\node[dot] at (8,2){};
\node[above, right] at (3.3,3.2) {};
\node[dot] at (3,3){};
\node[above, right] at (5.3,3.2) {};
\node[dot] at (5,3){};
\node[above, right] at (7.3,3.2) {};
\node[dot] at (7,3){};
\node[above, right] at (4.3,4.2) {};
\node[dot] at (4,4){};
\node[above, right] at (6.3,4.2) {};
\node[dot] at (6,4){};

\end{tikzpicture}      
\end{center}

\end{example}

For the remainder of Case A, we fix (unless otherwise stated) a set $P$ of nine points, containing $4$ collinear points and satisfying (\ref{cond:0}), (\ref{cond:b}) and (\ref{cond:c}). The following lemma is an easy consequence of our assumptions.

\begin{lemma}\label{cond:d} 
    For any $x\in P$ and any line $l$ such that $|l\cap P| = 4$ (which exists since we are in Case A), if $x\not\in l$, then there do not exist  three distinct lines $l_1,l_2,l_3$ such that $|l_i\cap P\cap l|=1$, $|l_i\cap P|\geq 3$ and $x\in l_i$.
\end{lemma}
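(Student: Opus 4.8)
The plan is to argue by contradiction and to manufacture, from the hypothetical configuration, a four-element subset of $P$ that violates condition (\ref{cond:b}). So suppose toward a contradiction that $x\in P$ with $x\notin l$, that $|l\cap P|=4$, and that there exist three distinct lines $l_1,l_2,l_3$, each passing through $x$, with $|l_i\cap P\cap l|=1$ and $|l_i\cap P|\geq 3$ for each $i$.

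First I would pin down how the $l_i$ meet $l$. Since $l_1,l_2,l_3$ are distinct and all pass through the single point $x\notin l$, they intersect pairwise only at $x$; in particular they meet $l$ in three \emph{distinct} points, each of which is a $P$-point by the hypothesis $|l_i\cap P\cap l|=1$. As $|l\cap P|=4$, I may therefore label $l\cap P=\{p_1,p_2,p_3,p_4\}$ so that $p_i\in l_i$ for $i=1,2,3$, leaving a fourth point $p_4\in(l\cap P)\sm\{p_1,p_2,p_3\}$. This is the only place the hypothesis $|l\cap P|=4$ enters. Next I would extract a third $P$-point from each heavy line: since $|l_i\cap P|\geq 3$ and $l_i\cap P$ already contains the two distinct points $x$ and $p_i$ (distinct because $x\notin l$ while $p_i\in l$), there is a point $z_i\in (l_i\cap P)\sm\{x,p_i\}$. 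Because $l_i$ meets $l$ in the single $P$-point $p_i$, the point $z_i$ cannot lie on $l$, so $z_i\notin\{p_1,p_2,p_3,p_4\}$ and $z_i\neq x$.

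Finally I would verify that $A:=\{x,p_1,p_2,p_3\}$ is a set of size four in which \emph{no} pair pairs inside $A$, contradicting (\ref{cond:b}). For each $i\in\{1,2,3\}$ the unique line through $x$ and $p_i$ is exactly $l_i=l_{x,p_i}$, and it contains $z_i\in P$ with $z_i\notin A$; hence $l_{x,p_i}\cap P\not\subseteq A$. For distinct $i,j\in\{1,2,3\}$ the line $l_{p_i,p_j}$ is $l$ itself, which contains $p_4\in P\sm A$; hence $l_{p_i,p_j}\cap P\not\subseteq A$. Running through all six pairs of $A$ this way shows $A$ witnesses the failure of (\ref{cond:b}), the desired contradiction.

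I do not expect a serious obstacle here: the entire content is the bookkeeping that guarantees the two kinds of witnesses outside $A$, namely the genuine fourth point $p_4\in l\cap P$ and a genuine off-$l$ point $z_i$ on each $l_i$, and both follow at once from $|l\cap P|=4$ and $|l_i\cap P|\geq 3$ together with $x\notin l$. The only point requiring care is confirming $z_i\notin l$ (so that $z_i\notin A$), which is forced by $|l_i\cap P\cap l|=1$. It is worth noting that the argument uses only (\ref{cond:b}) and the cardinality $|l\cap P|=4$; it invokes neither (\ref{cond:0}) nor (\ref{cond:c}), nor the assumption $|P|=9$.
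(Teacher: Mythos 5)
Your proof is correct and complete: the set $A=\{x,p_1,p_2,p_3\}$ is indeed a four-element subset in which every pair fails to pair inside $A$ (the pairs containing $x$ are witnessed by the off-$l$ points $z_i$, and the pairs on $l$ by the fourth point $p_4$), contradicting (\ref{cond:b}). The paper states this lemma without proof, calling it ``an easy consequence of our assumptions,'' and your argument is exactly the intended one, supplying the omitted bookkeeping; note in particular that your observation that only (\ref{cond:b}) and $|l\cap P|=4$ are needed is accurate.
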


\begin{lemma}\label{conditions-imply-no5collin}
$\collin(P) = 4$.
\end{lemma}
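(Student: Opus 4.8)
The plan is to prove the nontrivial direction $\collin(P) \le 4$; the reverse inequality $\collin(P) \ge 4$ is immediate, since we are in Case A, where $P$ is assumed to contain four collinear points. So I would argue by contradiction: assume $\collin(P) \ge 5$ and derive a violation of condition (\ref{cond:b}).

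First I would fix a line $l$ realising the maximum, so that $|l \cap P| = \collin(P) \ge 5$, and choose any four points $A = \{a_1, a_2, a_3, a_4\} \subseteq l \cap P$. Since $|A| = 4$, condition (\ref{cond:b}) applies to $A$ and produces some pair $a, a' \in A$ that pairs inside $A$. The key observation is that the unique line property collapses every connecting line inside $A$: for any two distinct $a, a' \in A$, both lie on $l$, so $l_{a,a'} = l$. Because $|l \cap P| \ge 5$, there is a fifth point $z \in (l \cap P) \setminus A$, and then $z \in l_{a,a'} \cap P$ while $z \notin A$; hence $l_{a,a'} \cap P \not\subseteq A$, i.e. $a, a'$ do \emph{not} pair inside $A$. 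As the pair $a, a'$ was arbitrary, no pair of points of $A$ pairs inside $A$, contradicting (\ref{cond:b}). Therefore $\collin(P) \le 4$, and combined with the Case A hypothesis we conclude $\collin(P) = 4$.

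I expect no serious obstacle: the whole argument rests on applying (\ref{cond:b}) to a four-element subset of a hypothetical five-point collinear set, and the only thing to get right is that every connecting line inside such an $A$ must equal $l$ itself, so the extra collinear point always escapes $A$. It is worth remarking that a more computational route also works but is considerably more delicate: using (\ref{cond:0}) to distribute $P \setminus l$ across the two remaining covering lines and then counting, via (\ref{cond:c}), how many lines can meet $l$ in distinct $P$-points. That approach quickly disposes of $|l \cap P| \ge 6$ (the crude bound $\binom{9 - |l\cap P|}{2} \ge |l \cap P|$ already fails), but the balanced split of $P \setminus l$ into two pairs at $|l \cap P| = 5$ is genuinely borderline for the counting and forces one into the geometry of the resulting complete quadrilateral. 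Invoking (\ref{cond:b}) directly sidesteps all of this, which is why I would present the short argument above.
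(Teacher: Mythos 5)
Your proposal is correct and is essentially the paper's own proof: both take four points from a hypothetical set of five collinear points, apply condition (\ref{cond:b}) to that four-element set, and note that every connecting line between two of them is the common line, which contains the fifth point and so escapes the set. The additional remark about the reverse inequality and the alternative counting route is fine but not needed.
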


\begin{proof}
Suppose $\collin(P)\geq 5$ and let $\{a_1,\dots,a_5\}\subseteq P$ be a set of $5$ collinear points. By Condition (\ref{cond:b}), every set of four points in $P$ contains two points that pair inside it. In particular, $\{a_1,\dots,a_4\}$ contains two points that pair inside $\{a_1,\dots,a_4\}$, but that is impossible, since, for any two distinct $i,j\leq 4$ we have that $l_{a_i,a_j}$ contains $a_5$.
\end{proof}

\begin{lemma}\label{conditions-imply-no2disjoint4collin}
Any two sets of four collinear points in $P$ intersect.
\end{lemma}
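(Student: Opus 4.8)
The plan is to argue by contradiction. Suppose $P$ contains two disjoint sets of four collinear points, say $A_0=\{a_1,a_2,a_3,a_4\}$ lying on a line $\ell$ and $B_0=\{b_1,b_2,b_3,b_4\}$ lying on a line $\ell'$. Since $A_0\cap B_0=\emptyset$ they cannot lie on a common line (as $\collin(P)=4$ would force $A_0=\ell\cap P=B_0$), so $\ell\ne\ell'$ and in fact $\ell\cap P=A_0$, $\ell'\cap P=B_0$. Because $|P|=9$ in Case A, exactly one point remains; call it $q$, so $P=A_0\cup B_0\cup\{q\}$. First I would record the cheap structural facts: by \Cref{conditions-imply-no5collin} we have $\collin(P)=4$, so $q\notin\ell$ and $q\notin\ell'$ (otherwise one of these lines would carry five collinear $P$-points).

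The heart of the argument is to feed $\ell$ into condition (\ref{cond:c}). For each $i$, since $|\ell\cap P|=4$ and $a_i\in\ell\cap P$, there is a line $m_i\ne\ell$ through $a_i$ with $|m_i\cap P|\ge 3$. I claim $m_i\cap P=\{a_i,q,b_{j_i}\}$ for some $b_{j_i}\in B_0$. Indeed, $m_i\ne\ell$ forces $m_i\cap\ell=\{a_i\}$, so $a_i$ is the only $A_0$-point on $m_i$; and since $a_i\notin\ell'$ we have $m_i\ne\ell'$, so $m_i$ meets $\ell'$ in at most one point, i.e. contains at most one $B_0$-point. Hence $m_i\cap P\subseteq\{a_i,q\}\cup B_0$ has at most three elements, and having at least three forces $q\in m_i$ together with exactly one $b_{j_i}\in m_i$. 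In particular each $m_i$ passes through $q$, so $m_i=l_{a_i,q}$, and these four lines are pairwise distinct (two coinciding would place two $A_0$-points on a line through $q$, forcing that line to be $\ell$, contradicting $q\notin\ell$).

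This already yields the contradiction in two interchangeable ways. Most directly, $m_1,m_2,m_3$ are three distinct lines through $q\notin\ell$, each meeting $\ell$ in exactly one $P$-point and each with $|m_i\cap P|\ge 3$; this is precisely what \Cref{cond:d} forbids (with $x=q$ and the $4$-line taken to be $\ell$). Alternatively, and without invoking \Cref{cond:d}, one can exhibit a direct violation of (\ref{cond:b}): take $A=\{a_1,a_2,a_3,q\}$. Any pair $a_i,a_{i'}$ lies on $\ell$, whose $P$-points include $a_4\notin A$, so it does not pair inside $A$; and any pair $a_i,q$ lies on $m_i$, which contains $b_{j_i}\notin A$, so it does not pair inside $A$ either. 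Thus no two points of $A$ pair inside $A$, contradicting (\ref{cond:b}).

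I expect the main obstacle to be the bookkeeping in the middle step: pinning down that $m_i$ must consist of $a_i$, the leftover point $q$, and a single $B_0$-point, rather than two $B_0$-points or some unrelated third point. This rigidity is exactly where $\collin(P)=4$ and the fact that $A_0$ and $B_0$ each exhaust the $P$-points of their lines get used, funnelling every auxiliary $3$-line through an $A_0$-point into the unique remaining point $q$. Once this is established, either form of the contradiction is immediate.
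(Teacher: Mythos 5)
Your proof is correct and follows essentially the same route as the paper's: apply (\ref{cond:c}) to the $4$-line $\ell$ to produce ordinary $\geq 3$-lines through the $a_i$, observe that each must pass through the single leftover point $q$, and contradict \Cref{cond:d}. Your alternative ending via a non-pairing four $\{a_1,a_2,a_3,q\}$ violating (\ref{cond:b}) is a nice extra that bypasses \Cref{cond:d}, but the core argument is the same.
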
 

\begin{proof}
Suppose otherwise. Let $\{a_1,a_2,a_3,a_4\}$ lying on a line $l_a$ and $\{b_1,b_2,b_3,b_4\}$ lying on a line $l_b$ be disjoint subsets of $P$. By (\ref{cond:c}), there are lines $k_1,k_2,k_3\in O_{\geq3}(l_a)$ such that $a_i \in k_i$ for each $i\in \{1,2,3\}$. By \Cref{conditions-imply-no5collin}, $k_i\neq l_b$ for each $i\in \{1,2,3\}$. Since $|k_i\cap P|\geq 3$ for each $i\in \{1,2,3\}$, we have that $(P\setminus (l_a\cup l_b))\cap k_i \neq \es$. Since $|P\setminus (l_a\cup l_b)|=1$, we may let $P\setminus (l_a\cup l_b)=\{c\}$, and, so, $c\in k_i$ for each $i\in \{1,2,3\}$. But then this contradicts \Cref{cond:d}.
\end{proof}

\begin{lemma}\label{conditions-imply-unique-Li}
There is a unique set $S\in\F_3$ which covers $P$.
\end{lemma}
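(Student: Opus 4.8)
The plan is to reduce the statement to the uniqueness of a \emph{two}-line cover of a residual five-point set, exploiting the four collinear points that Case A supplies.

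First I would pin down that $m_P = 3$: condition (\ref{cond:0}) gives $m_P \leq 3$, while $|P| = 9$ together with $\collin(P) = 4$ (\Cref{conditions-imply-no5collin}) forces $m_P \geq 3$, since any two lines cover at most $8$ points of $P$. Hence every $S \in \F_3$ covering $P$ is a union of exactly three distinct lines, and it suffices to show that this set of three lines is determined.

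Next, fix a line $l$ with $|l \cap P| = 4$ (which exists in Case A). I claim every three-line cover of $P$ must contain $l$: the four collinear points of $l \cap P$ are covered by the three lines of $S$, and since $3 < 4$, \Cref{lemma for line uniqueness} guarantees that one of those lines passes through all four of them, i.e.\ equals $l$. Writing $Q := P \setminus l$, so that $|Q| = 5$, the remaining (at most two) lines of $S$ cover $Q$. I would first record that $\collin(Q) \leq 3$: four collinear points in $Q$ would lie on a four-line whose $P$-points are disjoint from $l \cap P$, contradicting \Cref{conditions-imply-no2disjoint4collin}. In particular no single line covers $Q$, so exactly two lines remain, and $S$ restricts to a two-line cover of $Q$.

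The hard part, and the crux of the lemma, is then the following auxiliary claim: \emph{a set $Q$ of five points with $\collin(Q) \leq 3$ that is covered by two lines admits a unique two-line cover.} The key observation is that for any collinear triple $T \subseteq Q$ lying on a line $M$, every line other than $M$ meets $T$ in at most one point (two points of $T$ already determine $M$); hence in any two-line cover of $Q$ the line $M$ must appear, for otherwise the two cover lines would together miss a point of $T$. Since a two-line cover has only two lines, $Q$ has at most two distinct collinear triples, and each triple-line is forced into the cover; as $2 + 2 < 5$ there is at least one triple. A brief split into the cases of one versus two triple-lines then pins down both lines of the cover (in the one-triple case, the two points of $Q$ off the triple-line determine the second line uniquely). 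Combining this with the reduction above, the three lines $l, L_2, L_3$ covering $P$ are forced, so $S = l \cup L_2 \cup L_3$ is the unique element of $\F_3$ covering $P$.
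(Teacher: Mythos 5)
Your proof is correct and follows essentially the same route as the paper: force the $4$-line into any cover via \Cref{lemma for line uniqueness}, then use \Cref{conditions-imply-no5collin} and \Cref{conditions-imply-no2disjoint4collin} to reduce to the remaining five points, and pin down the last two lines. Your auxiliary claim about the uniqueness of a two-line cover of a five-point set with $\collin\leq 3$ is a slightly more careful packaging of the paper's final step (it explicitly handles the case of two distinct collinear triples among the residual points, which the paper's ``this process identifies $S$'' glosses over), but the underlying argument is the same.
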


\begin{proof}
By assumption, $P$ contains a set of four collinear points. By \Cref{lemma for line uniqueness} if $S=L_1\cup L_2\cup L_3\in\F_3$ covers $P$ it must contain a line passing through these four points, say $L_1$. By \Cref{conditions-imply-no5collin}, $|P\sm L_1|=5$. $P\sm L_1 \sub L_2 \cup L_3$, so $P\sm L_1$ contains a set of three collinear points. By \Cref{lemma for line uniqueness} again, one of $L_2,L_3$ passes through these points, say this is $L_2$. By \Cref{conditions-imply-no2disjoint4collin}, $|P\sm (L_1\cup L_2)|=2$. Thus, $L_3$ must be the unique line passing through both points in this set. This process identifies $S$ and thus $L_1,L_2,L_3$ uniquely up to renumbering.
\end{proof}

In virtue of \Cref{conditions-imply-unique-Li}, we fix an enumeration of the unique three lines $L_1,L_2,L_3$ covering $P$.

\begin{restatable}{lemma}{prelim}\label{lem:set1-preparatory} \,
\begin{enumerate}
    \item \label{O3line-intersect-3collin} Let $l$ be a $4$-line and $k\in O_3(l)$. Then $k$ intersects all sets of three collinear points in $P\setminus l$. In particular, for all $A\subseteq P$, if $\collin(A)\geq 4$, then $k\cap A\neq \emptyset$.
    \item  \label{nobadquadruple} Given disjoint $A,B \subseteq P$ where $A$ is a set of four collinear points and $B$ a set of (any) three points, there is no bad quadruple with respect to these sets.
    \item \label{set1-4collin} Given $A\subseteq P$, if $\collin(A) = 4$ then $A$ can be isolated by $\F_3$.
    \item \label{no3collin-leq6} Given $A\subseteq P$ with $|A|\in \{5,6\}$ which contains no three collinear points, $A$ can be isolated by $\F_3$. 
    \item \label{3collin-leq6} If $A\subseteq P$ is such that:
    \begin{enumerate} 
        \item $A\subseteq P$ has $|A|=6$ (respectively, $|A|=5)$
        \item No ordinary 3-line in $P$ contains three points in $A$
        \item No four points in $A$ are collinear.
        \item There are three collinear points in $A$. Call those points $a_1, a_2, a_3$, the rest of the points in $A$ by $b_1, b_2, b_3$ (respectively, $b_1, b_2$), and let $P\setminus A=\{q_1, q_2, q_3\}$ (resp $P\setminus A =\{q_1, q_2, q_3,q_4\}$
  \end{enumerate}
    Then $A$ can be isolated by $\F_3$.
\end{enumerate}
\end{restatable}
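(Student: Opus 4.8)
The plan is to split the five parts into two groups. Parts (1) and (2) are \emph{structural} non-existence statements, which I would prove by contradiction, exploiting the rigidity of the unique covering $L_1\cup L_2\cup L_3$ from \Cref{conditions-imply-unique-Li} together with $\collin(P)=4$ (\Cref{conditions-imply-no5collin}) and, crucially, \Cref{cond:d} (which encodes condition~\ref{cond:c}). Parts (3)--(5) are \emph{constructive} isolation statements, which I would reduce to exhibiting a union of (at most) three lines meeting $P$ in exactly $A$; the tools here are the matching machinery of \Cref{3smatch-withconds}, the avoidance lemma \Cref{4avoid2}, and parts (1)--(2) themselves.

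For part (1), the key initial observation is that any $4$-line $l$ must be one of the three covering lines: by \Cref{lemma for line uniqueness} and $\collin(P)=4$, a line through four collinear points is forced to be some $L_i$, say $l=L_1$, so that $P\setminus l\subseteq L_2\cup L_3$ consists of five points. I would prove the ``in particular'' clause first, since it is what later parts use. If $\collin(A)\ge 4$, then $A$ contains four collinear points lying on a $4$-line $n$ with $n\cap P\subseteq A$. If $n=l$, then the unique point $x$ of $k\cap l\cap P$ already lies in $A$, so $k\cap A\ne\es$. If $n\ne l$, then by \Cref{conditions-imply-no2disjoint4collin} the two $4$-lines meet in a common $P$-point, so at least three points of $n\cap P$ lie in $P\setminus l$; these are three collinear points off $l$, and two of them must share a covering line, forcing $n\in\{L_2,L_3\}$. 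The remaining task is to show $k$ meets these points, and this is exactly where the configurations possessing a second $4$-line must be excluded using \Cref{cond:d}; the statement about arbitrary collinear triples follows by the same analysis applied to triples lying on a $4$-line.

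For part (2), suppose $(a,a',b,b')$ were a bad quadruple, with $A$ on the $4$-line $l_A$ (so $l_A\cap P=A$, and $\{q_1,q_2\}=P\setminus(A\cup B)$ lies off $l_A$). The four lines $l_{a,b},l_{a,b'},l_{a',b},l_{a',b'}$ are pairwise distinct --- two of them sharing a point of $A$ would force that line to be $l_A$, contradicting $b,b'\notin l_A$ --- and each is an ordinary $\ge 3$-line with respect to $l_A$ passing through $q_1$ or $q_2$. By \Cref{cond:d}, at most two pass through each $q_i$, hence exactly two pass through each; moreover two lines sharing any of $a,a',b,b'$ cannot meet the same $q_i$, so the only admissible split sends $\{l_{a,b},l_{a',b'}\}$ through one $q_i$ and $\{l_{a,b'},l_{a',b}\}$ through the other. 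I would then derive a contradiction from this rigid incidence pattern: the five points of $P\setminus l_A$ must lie on $L_2\cup L_3$, and the forced collinearities either create a fifth collinear point (contradicting \Cref{conditions-imply-no5collin}) or violate \Cref{cond:d} at one of the points of $A$.

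Finally, for the isolation statements (3)--(5) I would, in each case, cover the ``collinear core'' of $A$ by one dedicated line --- the $4$-line in (3), the $3$-line in (5) --- and cover the remaining points of $A$ by a matching of lines through pairs that avoids $P\setminus A$. Existence of such a matching is precisely what parts (1) and (2), \Cref{4avoid2}, and \Cref{3smatch-withconds} are designed to guarantee, once one checks there are no bad pairs or bad quadruples obstructing it: part (4), where $A$ has no three collinear points, is handled by a pairing argument through \Cref{4avoid2}, while (3) and (5) split into subcases according to $\collin(A)$ and the distribution of $A$ among $L_1,L_2,L_3$. I expect the main obstacle to lie in parts (1) and (2): in both, the combinatorics forces a completely rigid incidence pattern, and converting that pattern into an actual contradiction requires invoking condition~\ref{cond:c} (through \Cref{cond:d}) simultaneously with the covering constraint --- precisely the configurations containing a second $4$-line are the ones that must be ruled out by hand.
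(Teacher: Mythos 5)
Your plan identifies the right toolkit (condition (\ref{cond:c}) via \Cref{cond:d}, $\collin(P)=4$, \Cref{3smatch-withconds}, \Cref{4avoid2}) and the same broad architecture as the paper, but at the two places you yourself identify as the crux there are genuine gaps. In part (2), your pairwise-distinctness argument only covers coincidences $l_{a,b}=l_{a',b'}$ with $a\neq a'$, where the common line picks up two points of $A$ and is forced to be $l_A$. It says nothing about $l_{a,b}=l_{a,b'}$: that line contains $a,b,b'$ and is certainly not $l_A$, so ``contradicting $b,b'\notin l_A$'' does not apply. This case is not vacuous, and the paper spends a full paragraph on it --- one must use that each of the four lines meets $\{q_1,q_2\}$ to get four collinear points $\{a,b,b',q\}$, then play the two lines through $a'$ against each other using $\collin(P)=4$ to reach a contradiction. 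Without this, the ``rigid split'' of the four lines over $q_1,q_2$ that your endgame relies on is not established. Moreover, ``violate \Cref{cond:d} at one of the points of $A$'' cannot be the right closing move, since \Cref{cond:d} constrains points \emph{off} the $4$-line; the actual contradiction applies (\ref{cond:c}) at the two remaining points of $A$ and \Cref{cond:d} at the three points of $P\setminus l_A$ already lying on two ordinary lines, forcing the two new ordinary lines to coincide and yield five collinear points.

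In part (1) the reduction is backwards: the main clause concerns an \emph{arbitrary} collinear triple in $P\setminus l$ (e.g.\ the three points of a covering line meeting $P$ in exactly three points), which need not lie on a $4$-line, so it does not ``follow by the same analysis applied to triples lying on a $4$-line.'' The paper argues in the opposite direction: assuming $k$ misses the triple, so $k\cap P=\{a_1,b_4,b_5\}$, it first \emph{produces} a $4$-line through the triple and some $a_i$ (using (\ref{cond:c}) and \Cref{cond:d} to find an ordinary line avoiding both $b_4$ and $b_5$), and then applies (\ref{cond:c}) a second time to that new $4$-line to force one of the resulting ordinary lines to contain both $b_4$ and $b_5$, i.e.\ to equal $k$ --- a contradiction with $k\in O_3(l)$. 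That second application is the missing mechanism in your sketch. Finally, for parts (3)--(5) the phrase ``once one checks there are no bad pairs or bad quadruples obstructing it'' is where essentially all of the work lives: the paper's proofs of (4) and (5) are extended case analyses on $|A\cap l|$, on whether the line through $a_1,a_2,a_3$ meets $P\setminus A$, and on which pairs pair inside $A$, and none of that is reconstructible from the proposal. The plan is sound, but the proof is not yet there.
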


\begin{proof}
This is a long and technical, yet elementary proof. The brave reader can find it in \Cref{pf:set1-preparatory}.
\end{proof}

\begin{proposition}[Sufficiency]\label{3linesleft} 
$P$ is shattered by $\F_3$.
\end{proposition}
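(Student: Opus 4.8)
The plan is to verify directly that every subset $A\subseteq P$ is isolated by $\F_3$, arguing by cases on $|A|$ and using \Cref{lem:set1-preparatory} as the main engine for the intermediate cardinalities. The two extreme cases are immediate. If $|A|\le 3$, then through each point of $A$ we may choose a line meeting $P$ only in that point (possible since only finitely many lines through a given point hit $P\setminus A$), and the union of these at most three lines isolates $A$. If $|A|=9$, then $A=P$ is isolated by the covering $L_1\cup L_2\cup L_3$ supplied by (\ref{cond:0}) and \Cref{conditions-imply-unique-Li}.

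For $|A|=4$ I would invoke (\ref{cond:b}) directly: it yields distinct $a,a'\in A$ pairing inside $A$, so $l_{a,a'}$ covers two points of $A$ while avoiding $P\setminus A$; adjoining a generic line through each of the remaining two points of $A$ (again avoiding $P\setminus A$) gives three lines isolating $A$. The heart of the intermediate range $|A|\in\{5,6\}$ is a split according to $\collin(A)$, which by \Cref{conditions-imply-no5collin} is at most $4$. If $\collin(A)=4$, then \Cref{lem:set1-preparatory}(\ref{set1-4collin}) isolates $A$; if $A$ has no three collinear points, then \Cref{lem:set1-preparatory}(\ref{no3collin-leq6}) applies. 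The remaining possibility is $\collin(A)=3$. When, additionally, no ordinary $3$-line of $P$ contains three points of $A$, this is exactly \Cref{lem:set1-preparatory}(\ref{3collin-leq6}). Otherwise some ordinary $3$-line $k$ carries three points of $A$; since such a $k$ is a $3$-line we have $k\cap P\subseteq A$, so I would peel off $k$ as one of the three lines and isolate the remaining $|A|-3\le 3$ points of $A\setminus k$ with the other two lines, using the pairing/matching arguments (in the style of \Cref{3smatch-withconds} and \Cref{lem:set1-preparatory}(\ref{nobadquadruple})) to keep those lines away from $P\setminus A$.

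The main obstacle is the large range $|A|\in\{7,8\}$, where $P\setminus A$ has at most two points and the three available lines are tightly constrained. The clean sub-case is when $A$ still contains four collinear points: then $\collin(A)=4$ and \Cref{lem:set1-preparatory}(\ref{set1-4collin}) finishes. The genuine difficulty is when $\collin(A)\le 3$; since $\collin(P)=4$, every $4$-line of $P$ must then lose a point to $P\setminus A$, which by \Cref{conditions-imply-no2disjoint4collin} forces the one or two deleted points into a very rigid position (a common point of all the $4$-lines). Here I expect to use (\ref{cond:c}) decisively: it provides, for each point on a $4$-line, a secondary line with at least three $P$-points, and these secondary lines are precisely what allow one to re-cover the three surviving collinear points of a broken $4$-line without reusing the forbidden $4$-line or meeting the deleted points.

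Controlling how these secondary lines interact — ensuring that three lines altogether suffice and that all of them avoid $P\setminus A$ — is the crux of the argument. I would handle it by a careful finite case analysis on the position of the deleted one or two points relative to $L_1,L_2,L_3$, leaning on \Cref{conditions-imply-unique-Li} to pin down the covering configuration and on \Cref{cond:d} to rule out the degenerate situation in which too many secondary $3$-lines concentrate through a single point off a $4$-line. My expectation is that every such configuration reduces, after re-routing via the secondary lines of (\ref{cond:c}), to a covering of $A$ by three admissible lines, completing the proof that $\F_3$ shatters $P$.
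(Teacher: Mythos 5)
Your overall skeleton matches the paper's: induct over $|A|$, dispose of $|A|\le 4$ and $|A|=9$ directly, use \Cref{lem:set1-preparatory}(\ref{set1-4collin}) when $\collin(A)=4$, and use \Cref{lem:set1-preparatory}(\ref{no3collin-leq6}) and (\ref{3collin-leq6}) for the intermediate sizes. Those parts are correct. But the submission has a genuine gap: the two places where the real work happens are left as declared intentions rather than arguments. For $|A|\in\{7,8\}$ with $\collin(A)\le 3$ you write ``I expect to use (\ref{cond:c}) decisively'' and ``my expectation is that every such configuration reduces\dots''; no construction is actually given. The paper handles $|A|=8$ by an explicit half-page argument (producing three ordinary $3$-lines through the surviving points of the $4$-line, locating $b_4,b_5$ on them via \Cref{cond:d}, and exhibiting three concrete isolating lines), and it handles $|A|=7$ by first proving a pivot you are missing: if \emph{no} ordinary $3$-line $k$ satisfies $k\cap P\subseteq A$, then $|A|\le 6$ (the paper's Claim~2). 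That observation is what forces every $7$-point $A$ into the ``ordinary $3$-line inside $A$'' case; without it your $|A|=7$ case has no entry point, and your heuristic that the deleted points must be ``a common point of all the $4$-lines'' does not follow from \Cref{conditions-imply-no2disjoint4collin} alone.

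The second gap is in your ``otherwise'' branch for $|A|=6$: when an ordinary $3$-line $k$ carries three points of $A$, you propose to ``peel off $k$'' and isolate the remaining three points of $A\setminus k$ with two lines. That requires two of those three points to pair inside $A$, which need not happen. The paper's Claim~1 confronts exactly this: when $\collin(A\setminus k)=2$ and $|A\setminus k|=3$, it may be that no two points of $A\setminus k$ pair, and the isolating triple of lines then does \emph{not} include $k$ at all --- instead one matches the three points of $k\cap P$ against the three points of $A\setminus k$ via (\ref{cond:b}), through a delicate combinatorial argument showing a system of distinct good pairs exists. Your parenthetical nod to ``pairing/matching arguments in the style of \Cref{3smatch-withconds}'' points at the right toolbox, but the strategy as you state it (keep $k$, cover $A\setminus k$ with two lines) fails in precisely the configurations that make this case hard. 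To complete the proof you would need to supply Claim~2, the explicit $|A|=8$ construction, and the full matching argument of Claim~1.
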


\begin{proof} 
We must show that each $A\subseteq P$ can be isolated by $\F_3$. Let $A\subseteq P$. Without loss of generality, we may assume that $\collin(A)<4$, otherwise the result follows by \Cref{lem:set1-preparatory}(\ref{set1-4collin}). The rest of the argument is by cases on $|A|$, for $A\subseteq P$ containing no four collinear points. If $|A|\leq 3$ then the result follows trivially, if $|A| = 4$, then the result follows from (\ref{cond:b}), and if $|A| = 9$ then the result follows from (\ref{cond:0}). For the rest of the proof, fix a line $l$ passing through four points $\{a_1,a_2,a_3,a_4\}$ and let $P\setminus l= \{b_1,b_2,b_3,b_4,b_5\}$ where $\{b_1,b_2,b_3\}$ are collinear.

Suppose $|A|=8$. Since $A$ contains no set of four collinear points, $\{a_1,a_2,a_3,a_4\}\nsubseteq A$. Without loss of generality, we may assume that $a_4\notin A$. By \Cref{cond:c}, there are distinct ordinary $3$-lines $k_1,k_2,k_3$ with $a_i\in k_i$, for $i\in[3]$. Since the $k_i$ are ordinary $3$-lines they must intersect $\{b_1,b_2,b_3\}$ at most once. So, each $k_i$ must intersect $\{b_4,b_5\}$ non-trivially. Without loss of generality, we assume that $b_4\in k_1\cap k_2$. By \Cref{cond:d}, this implies that $b_5\in k_3$. Since $\{b_1,b_2,b_3\} \subseteq P\setminus l$ is a set of three collinear points, \Cref{lem:set1-preparatory}(\ref{O3line-intersect-3collin}) implies that each $k_i$ must intersect $\{b_1,b_2,b_3\}$ at least once. Without loss of generality, we assume that $b_3\in K_3$. Since $k_1\neq k_2$, we must have that $b_3\notin k_1\cap k_2$. So, we may assume without loss of generality $b_3\notin k_1$ and that $k_1\cap P= \{a_1, b_1, b_4\}$ which is entirely disjoint from $k_3\cap P$. Hence, lines $k_1$, $k_3$ and $l_{a_2, b_2}$ isolate $A$ (since none of these lines pass through $a_4$).

For the remainder of the proof, suppose that $|A|\in\{5,6,7\}$ and, as always, $\collin(A)<4$.

\textit{Claim 1.} If there is an ordinary $3$-line $k$ such that $P\cap k \subseteq A$, then $\F_3$ shatters $A$.

\textit{Proof of Claim 1.} If $\collin(A\setminus k) = 3$, fix a line $k'$ passing through $3$ points in $A\sm k$. If $|k'\cap P|>3$, $|k'\cap P|=4$. By \Cref{lem:set1-preparatory}(\ref{O3line-intersect-3collin}), in this case, $k'\cap P \sub A$. If $|k'\cap P|=3$, it is trivial that $k'\cap P \sub A$. So, in either case, a subset $A'\sub A$ of size $6$ is isolated by two lines $k,k'$. It follows immediately from this that $A$ may be isolated by $\F_3$ since $|A|\leq 7$. 

Hence, we may assume that $\collin(A\setminus k) = 2$. Note that $|A\setminus k|\leq 4$ and $|P\setminus k|=6$. If $|A\setminus k|=4$, then the conditions of \Cref{4avoid2} must apply, and hence in this case we are done. If $|A\setminus k|\leq 2$, then we are done trivially. So, we may assume that we are precisely in a situation where $|A\setminus k|=3$. For any $c\in P\setminus A$, we may assume that $\collin((A\setminus K)\cup \{c\}) = 3$, for if $A\setminus k$ is contained in some $A'\subseteq P$ satisfying the conditions of \Cref{4avoid2}, the result again follows.

By \Cref{lem:set1-preparatory}(\ref{O3line-intersect-3collin}) $\collin(P\setminus k)\leq 3$, so we must have that the line through each pair of points in $A\setminus k$ passes through exactly one point in $P\setminus A$. Since these lines are distinct, we may label them as follows: $k\cap P:= \{x_1,x_2,x_3\}$, $A\setminus k:= \{y_1,y_2,y_3\}$ and $P\setminus A:= \{z_{12},z_{13},z_{23}\}$ where $z_{ij}$ lies on $l_{y_i,y_j}$. By condition (\ref{cond:b}) applied to $\{x_i,y_1,y_2,y_3\}$, for each $i$ there is $j$ such that $x_i, y_j$ pair inside $A$. 

First, we show that each $y_j$ pairs inside $A$ with some $x_i$. Suppose for contradiction that $y_1$ does not pair with any $x_i$. Then for each $i$, $l_{y_1,x_i}\cap \{z_{12},z_{13},z_{23}\} \neq \es$. Now, $k$ is an ordinary $3$-line, so for any $p\in P$ $\collin(\{x_1,x_2,x_3,p\}) = 3$. So, possibly after renumbering, $\{x_1,y_1,z_{12}\}$, $\{x_2, y_1, z_{13}\}$, $\{x_3, y_1,z_{23}\}$ are sets of collinear points. This implies that $\{x_1,y_1,z_{12}, y_2\}$, $\{x_2,y_1,z_{13}, y_3\}$ are sets of collinear points. But then $\{x_3,y_1,z_{23}\}$ lie on an ordinary 3-line $k'$ with respect to $l'$, where $l'$ is the line passing through $\{x_1,y_1,z_{12}, y_2\}$. This is an ordinary 3-line that does not intersect $\{x_2,z_{13}, y_3\}$, a set of three collinear points in $P\setminus l'$, which contradicts \Cref{lem:set1-preparatory}(\ref{O3line-intersect-3collin}). Of course, we may repeat the same argument with any $y_j$ in place of $y_1$. It follows that each $y_j$ pairs inside $A$ with some $x_i$. 

Note that if $y_1, y_2$ pair inside $A$ only with say $x_1$, then the set $\{x_2,x_3,y_1,y_2\}$ does not satisfy condition (\ref{cond:b}), so, possibly after renumbering, we have that $x_1, y_1$ and $x_2,y_2$ pair inside $A$. If $x_3,y_3$ pair inside $A$, then we're done, so suppose not. Then $y_3$ must pair inside $A$ with $x_1$ or $x_2$ and $x_3$ must pair inside $A$ with $y_1$ or $y_2$. If $y_3$ pairs inside $A$ with both $x_1$ and $x_2$, then we are done, so suppose that $y_3$ pairs inside $A$ with $x_1$ and not $x_2$. If $x_3$ pairs with $y_1$ we are done, so suppose otherwise and assume $x_3$ pairs inside $A$ with $y_2$. Now if $x_2$ does not pair with $y_1$, the set $\{y_1,y_3,x_2,x_3\}$ will contradict condition (\ref{cond:b}), so we must have that $x_2, y_1$ pair inside $A$, and we are done. \hfill$\blacksquare_{\textit{Claim }1}$

\textit{Claim 2.} If there is no ordinary $3$-line $k$ such that $P\cap k \subseteq A$, then $|A|\leq 6$.

\textit{Proof of Claim 2.} Observe that there is some $x\in l\cap P$ which is not in $A$.

By (\ref{cond:c}), there are at least three distinct lines $k_1,k_2,k_3\in O_3(l)\cup O_4(l)$ intersecting $l$ at each $y\in (l\cap P)\setminus \{x\}$. By assumption, for each $i$, $k_i\cap P$ is not contained in $A$. By \Cref{cond:d}, each $z\in P\setminus l$ is contained in at most two of the $k_i$. So, $k_1 \cap k_2 \cap k_3 \cap P =\es$. Thus, in order that each $k_i\cap P$ is not contained in $A$, there must be two further points in addition to $x$ that are in $P\sm A$. 

\hfill$\blacksquare_{\textit{Claim }2}$

If $A$ satisfies the assumption of Claim $2$, then $|A|\in\{5,6\}$, and such sets can be shattered by $\F_3$, by \Cref{lem:set1-preparatory}(\ref{no3collin-leq6}) and \Cref{lem:set1-preparatory}(\ref{3collin-leq6}).
\end{proof}

\begin{proposition}[Necessity]\label{necessity-A}
If $P\subseteq \mathbb{R}^2$ with $|P|=9$ is shattered by $\F_3$ and contains four collinear points then $P$ must satisfy (\ref{cond:0}), (\ref{cond:b}), and (\ref{cond:c}).
\end{proposition}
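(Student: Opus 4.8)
The plan is to assume that $P$ is shattered by $\F_3$ and to derive each of the three conditions in turn, using the contrapositive for the hard one, (\ref{cond:c}). Condition (\ref{cond:0}) is immediate: since $\F_3$ shatters $P$, \Cref{basic-fact}(\ref{n-lines}) produces three lines covering $P$, so $m_P\le 3$.

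For (\ref{cond:b}), fix any $A\sub P$ with $|A|=4$. Since $\F_3$ shatters $P$ it isolates $A$, so there are lines $L_1,L_2,L_3$ with $(L_1\cup L_2\cup L_3)\cap P=A$. Four points lying on three lines force, by pigeonhole, some $L_i$ to contain two distinct points $a,a'\in A$; then $l_{a,a'}=L_i$ and $l_{a,a'}\cap P=L_i\cap P\sub(L_1\cup L_2\cup L_3)\cap P=A$. Hence $a,a'$ pair inside $A$, which is exactly (\ref{cond:b}).

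The substance is (\ref{cond:c}), which I would prove by contradiction. Suppose there is a line $l$ with $|l\cap P|\ge 4$ and a point $x\in l\cap P$ lying on no line $l'\ne l$ with $|l'\cap P|\ge 3$. First I record the consequences of shattering. By \Cref{basic-fact}(\ref{no-n+1-collin}) we have $\collin(P)\le 4$, so $l$ is a $4$-line, say $l\cap P=\{x,a_2,a_3,a_4\}$, and $|P\sm l|=5$. Next, each member of the family $\{A\cup(l\cap P):A\sub P\sm l\}$ is isolated by $\F_3$, and since each such target contains the four collinear points of $l\cap P$, \Cref{lemma for line uniqueness} forces $l$ into every isolating cover; the remaining two lines then isolate $A$ inside $P\sm l$. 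Thus $\F_2$ shatters $P\sm l$, so by \Cref{prop:F_2-shattering} the set $P\sm l$ is covered by two lines with no four collinear, whence $\collin(P\sm l)=3$. Finally, the hypothesis on $x$ says $x$ is collinear with no two points of $P\sm l$, so $x$ lies on neither of the two covering lines of $P\sm l$.

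With this structure in hand the aim is to exhibit a subset of $P$ that $\F_3$ cannot isolate, contradicting shattering. The key reduction is that, because $x$ lies on no $3$-rich line other than $l$, any isolating cover of a set $A$ with $x\in A$ but $\{a_2,a_3,a_4\}\not\sub A$ cannot use $l$ and must place $x$ on a line meeting $P$ in at most one further point; equivalently, such an $A$ is isolable only if $A\sm\{x\}$, after deleting at most one point, can be covered by two lines avoiding $P\sm A$. I would take $A$ of the form $\{x\}\cup(P\sm l)$ or $\{x\}\cup(P\sm\{x,a_i\})$ and show this fails. The main obstacle — where the real work lies — is that isolability of such an $A$ depends on the fine incidence pattern of $P\sm l$. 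I would therefore run a case analysis on the configuration of $P\sm l$ (by \Cref{prop:F_2-shattering} it is either a $3$-line together with a disjoint pair, or two $3$-lines meeting in a common point) and on which of $a_2,a_3,a_4$ lie on its covering lines, since such an incidence can create a second $4$-line and is precisely what might let $x$ be absorbed cheaply into a cover. In each of these finitely many cases one checks that some subset of size $6$ or $7$ containing $x$ and omitting a point of $l$ cannot be isolated without a $3$-rich line through $x$. Carrying out this non-isolability check uniformly across the cases is the crux; the reductions above are what reduce each case to a short, elementary verification.
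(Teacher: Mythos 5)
Your treatment of (\ref{cond:0}) and (\ref{cond:b}) is complete and correct: the pigeonhole argument that an isolating triple of lines for a $4$-element set must put two of its points on one line, which is then $l_{a,a'}$ and meets $P$ only inside $A$, is exactly the content behind the paper's one-line justification. Your reductions for (\ref{cond:c}) are also sound and run parallel to the paper's: the contradiction hypothesis forces $l$ to be a $4$-line, forces $l$ into every cover isolating a superset of $l\cap P$ (via \Cref{lemma for line uniqueness}), hence $\F_2$ shatters $P\sm l$ and \Cref{prop:F_2-shattering} pins down its structure; and any set $A$ containing $x$ but not all of $l\cap P$ can only be isolated by spending a whole line on $x$ plus at most one other point.

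The gap is that the decisive step is announced but never carried out. Saying ``in each of these finitely many cases one checks that some subset of size $6$ or $7$ containing $x$ and omitting a point of $l$ cannot be isolated'' is precisely where the entire content of the paper's proof lives, and it is not a routine verification. In the paper's Case I (a unique $4$-line), one has to use isolability of $P\sm\{a_4\}$ to force two disjoint ordinary $3$-lines through $a_2$ and $a_3$, pin down (up to symmetry) which $b_j$'s they contain, and then show that isolating $P\sm\{a_2\}$ and $P\sm\{a_3\}$ simultaneously forces either a second $4$-line or two ordinary $3$-lines that necessarily intersect in $P$ --- a contradiction that emerges only from the interaction of several target sets, not from any single one. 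In Case II (two intersecting $4$-lines) the paper needs the specific pair of targets $B\cup\{b_4,b_5\}$ and $B\cup\{b_4\}$ with $B=\{a_1,a_2,b_1,b_2,a_4\}$, and the argument that the first forces $\collin(\{a_4,b_4,b_5\})=3$ while the second then fails. Your proposal names candidate targets of roughly the right shape but gives no argument that any particular one fails to be isolable, and it is not evident that the sets you name ($\{x\}\cup(P\sm l)$, or $P\sm\{a_i\}$ taken singly) suffice without the cross-referencing between several targets that the paper uses. As written, the proof of (\ref{cond:c}) is therefore incomplete.
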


\begin{proof}
Let $P\subseteq \mathbb{R}^2$ with $|P|=9$, and assume that $P$ is shattered by $\mathcal{F}_3$. Assume further that $P$ contains a set of $4$ collinear points. We note that (\ref{cond:b}) must hold, for otherwise, there would be some $A\subseteq P$ of size $4$ which is not isolated by $\F_3$. We also observe that (\ref{cond:0}) holds, for otherwise $P$ would not be isolated. 

It only remains to show that (\ref{cond:c}) holds. To this end, let $l$ be a line such that $|P\cap l| = 4$. Let $l\cap P = \{a_1,\dots a_4\}$ and $P\setminus l = \{b_1,..., b_5\}$. Assume towards a contradiction, that every line $k\neq l$ with $a_1 \in k$ is such that $|k \cap P| \leq 2$.  There are two cases to consider. 

\textbf{Case I:} Suppose that there is a unique set of four collinear points in $P$. Thus, the set $\{a_1,\dots,a_4\}$ is the only set of four collinear points in $P$. We know, from (\ref{cond:0}) and \Cref{basic-fact}(\ref{no-2-n+1-disjoint}), that $\collin(\{b_1,\dots,b_5\}) = 3$. So, without loss of generality, we may assume that $\{b_1, b_2, b_3\}$ are collinear. 

For each $i\in[4]$, let $A_i = P\setminus\{a_i\}$ and note that, by assumption, $A_i$ is isolated by $\F_3$. But, by assumption, every line $k\neq l$ containing $a_1$ has $|P\cap k| \leq 2$. Since we can isolate $A_4$ there must be lines $k_2, k_3$ with $a_i \in k_i$, $|k_i\cap P|=3$ and $(k_2\cap k_3)\cap P=\emptyset$. This forces, without loss of generality, $\collin(\{a_2, b_1, b_4\}) = 3$ and $\collin(\{a_3, b_2, b_5\}) = 3$ (here, we are picking an element from $\{b_1,b_2,b_3\}$ and an element from $\{b_4,b_5\}$ for each line). 

We now turn our attention to $A_3$ and $A_2$, to derive a contradiction. We will use repeatedly that, since $l$ is the only set of collinear points with size $4$, to isolate $A_i$ there must be two disjoint ordinary $3$-lines (with respect to $l$) contained in $A_i$. Assume first, that the only ordinary $3$-line passing through $a_4$ passes through $b_3$, so without loss of generality suppose that $\{a_4,b_3,b_4\}$ lie on an ordinary $3$-line. Then arguing with $A_3$ we see that there must be another ordinary $3$-line passing through $a_2$ which must pass through $b_2,b_5$, which is a contradiction, since this would witness another set of four collinear points. Hence, there must be an ordinary $3$-line passing through $a_4$ and not $b_3$. Without loss of generality, suppose that $\{a_4, b_2, b_4\}$ are collinear. Now, there can be no more ordinary $3$-lines containing $b_4$ by \Cref{cond:d}, so there is a unique ordinary $3$-line containing $a_3$. That is, the line through $\{a_3,b_2,b_5\}$. We see this since any ordinary $3$-line passing through $a_3$ must pass through $b_4$ or $b_5$, and by the observation above it cannot pass through $b_4$. But then, we cannot isolate $A_2$ since an ordinary $3$-line must pass through $b_4$, and hence $b_2$, or $b_5$. In either case, it intersects the unique ordinary $3$-line containing $a_3$.

\textbf{Case II:} Suppose there is more than one set of four collinear points in $P$. By \Cref{basic-fact}(\ref{no-2-n+1-disjoint}), in order for $\F_3$ to shatter $P$, any two sets of four collinear points must intersect. So, without loss of generality suppose both $\{a_1,...,a_4\}$ and $\{a_4, b_1, b_2, b_3\}$ are sets of collinear points. Let $k_{i,j}$ denote the line through $a_i$ and $b_j$. We know from \Cref{cond:d} that at most one of $k_{2,1},k_{2,2},k_{2,3}$ intersects $\{b_4,b_5\}$, so we may assume that $(k_{2,1}\cup k_{2,2})\cap \{b_4,b_5\}=\emptyset$. The same must be true for $k_{1,2}$ and $k_{1,1}$, by the assumption on $a_1$. Let $B = \{a_1,a_2,b_1,b_2,a_4\}$. Since $\F_3$ shatters $P$, both $B\cup \{b_4, b_5\}$ and $B\cup \{b_4\}$ can be isolated by $\F_3$. Suppose $B\cup \{b_4,b_5\}$ is isolated by $\F_3$. Then, two of the three lines that isolate $B\cup \{b_4,b_5\}$ must be either: $k_{1,1}$ and $k_{2,2}$, or $k_{1,2}$ and $k_{2,1}$. Therefore, $\collin(\{a_4,b_4,b_5\})=3$. But then, by similar reasoning, we cannot isolate $B\cup \{b_4\}$.

In either case, we have derived a contradiction, so (\ref{cond:c}) must hold, and this concludes the proof.
\end{proof}

\subsection*{Case B: \texorpdfstring{$P$}{P} does not contain 4 collinear points}
In this case, a set $P\sub \R^2$ of $9$ points satisfying condition (\ref{cond:0}) is shattered by $\F_3$ if and only if the following two conditions hold.
\begin{itemize}
    \item[(\namedlabel{cond:B}{B1})] For any choice of lines $\{L_1,\dots,L_{m_P}\}$ covering $P$ and for all $x\in P$, $x$ is contained in exactly two cross-lines with respect to $P$ and $\{L_{i_1},\dots,L_{i_{m_p}}:i_j\in[m_P],\text{ pairwise different}\}$. 
    \item[(\namedlabel{cond:C}{B2})] For any choice of lines $\{L_1, \dots ,L_{m_P}\}$ covering $P$, there exists a $y\in P$, two $y$-cross-lines $l_y,l_y^\prime$, and a cross-line $l$ with $y\notin l$ such that $l\cap l_y\neq \emptyset$, and $l\cap l_y' \neq \emptyset$, where these cross-lines are so with respect to $P$ and $\{L_1,\dots,L_{m_P}\}$.
\end{itemize}

\begin{example}\label{ex:set2}
    The following set of nine points in $\R^2$ does not contain four collinear points and satisfies $\{$(\ref{cond:0}),(\ref{cond:B}),(\ref{cond:C})$\}$.
    \begin{center}
\begin{tikzpicture}[scale=0.3]
\tikzset{dot/.style={fill=black,circle}}
{
\draw (0,0) -- (6,0);
\draw (1.5,3) -- (6,3);
\draw (3,0) -- (3,6);
\draw (0,0) -- (3,6);
\draw (0,0) -- (6,3);
\draw (3,3) -- (6,0);
\draw (6,3) -- (6,-6);
\draw (6,-6) -- (1.5,3);
\draw (6,-6) -- (3,6);
}
\node[above, right] at (0.2,0.2) {};
\node[dot] at (0,0){};
\node[above, right] at (3.2,0.2) {};
\node[dot] at (3,0){};
\node[above, right] at (6.2,0.2) {};
\node[dot] at (6,0){};
\node[above, right] at (1.7,3.2) {};
\node[dot] at (1.5,3){};
\node[above, right] at (3.2,3.2) {};
\node[dot] at (3,3){};
\node[above, right] at (6.2,3.2) {};
\node[dot] at (6,3){};
\node[above, right] at (3.2,6.2) {};
\node[dot] at (3,6){};
\node[above, right] at (6.2,-6.2){};
\node[dot] at (6,-6){};
\node[above, right] at (4.2,2.2) {};
\node[dot] at (4,2){};
\end{tikzpicture}
\end{center}
\end{example}

For the remainder of Case B, we fix (unless otherwise stated) a set $P$ of nine points, not containing any $4$ collinear points and satisfying (\ref{cond:0}), (\ref{cond:B}) and (\ref{cond:C}). The structure of this subsection closely follows that of Case A.

\begin{lemma}\label{Conditions-imply-unique-Li}
There is a unique set $S\in\F_3$ which covers $P$.
\end{lemma}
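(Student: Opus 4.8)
The plan is to show that the covering triple guaranteed by \ref{cond:0} is unique by assuming two distinct covers exist and contradicting \ref{cond:C}. First I would pin down the shape of any cover of $P$. Since we are in Case B, $P$ has no four collinear points, so any line meets $P$ in at most three points; as $|P|=9$, two lines can cover at most $8$ points (or else one of them would carry at least five collinear points), so in fact $m_P=3$ and a covering triple exists by \ref{cond:0}. Moreover any covering triple $\{L_1,L_2,L_3\}$ must meet $P$ in exactly three points per line, \emph{disjointly}, and so partitions $P$ into three collinear triples. A short distribution argument then classifies every $3$-line of $P$: if such a line shared two points with some $L_i$ it would equal $L_i$, so its three points either all lie on one $L_i$ or exactly one lies on each. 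Hence \emph{every $3$-line of $P$ is either a cover line of $\{L_1,L_2,L_3\}$ or a cross-line with respect to it}.

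Next, fix a cover $\mathcal{A}=\{L_1,L_2,L_3\}$ and suppose $\mathcal{B}=\{M_1,M_2,M_3\}$ is a second cover with $\mathcal{B}\neq\mathcal{A}$. Each $M_j$ is a $3$-line, hence by the classification either a line of $\mathcal{A}$ or a cross-line of $\mathcal{A}$. I claim all three $M_j$ are cross-lines. Indeed, if some $M_j$ equalled a cover line $L_i$, it would contain the whole triple $L_i\cap P$; but any $M_{j'}$ that is a cross-line meets that triple in a point, contradicting that $\mathcal{B}$ partitions $P$ into disjoint triples. Eliminating the mixed cases this way forces $\mathcal{B}$ to consist of three cross-lines of $\mathcal{A}$ whose point-sets partition $P$; I will call such a family a \emph{parallel class} of cross-lines.

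Now I would bring in \ref{cond:B}. Each point of $P$ lies on exactly two cross-lines of $\mathcal{A}$, and the parallel class $\mathcal{B}$ already supplies one cross-line through each point; hence the remaining cross-lines meet each point exactly once and so form a second parallel class $\mathcal{C}$ partitioning $P$. (A counting check confirms there are exactly six cross-lines, three in each class, since $2\cdot 9 = 3\cdot 6$.) The resulting incidence pattern is rigid: two cross-lines lying in the same class are point-disjoint, whereas a cross-line of $\mathcal{B}$ and a cross-line of $\mathcal{C}$ meet in exactly one point of $P$, because each is a transversal selecting one point from each triple of $\mathcal{A}$ and two shared points would force the two lines to coincide.

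Finally I would contradict \ref{cond:C}. Fix any $y\in P$; its two cross-lines with respect to $\mathcal{A}$ are one member $l_y$ of $\mathcal{B}$ and one member $l_y'$ of $\mathcal{C}$. Any further cross-line meeting $l_y$ at a point of $P$ must lie in $\mathcal{C}$ (since two cross-lines of $\mathcal{B}$ are disjoint), while any cross-line meeting $l_y'$ at a point of $P$ must lie in $\mathcal{B}$; no single cross-line can do both. Thus the configuration produced by the second cover violates \ref{cond:C}, contradicting our standing hypotheses on $P$. Hence no second cover exists and $S$ is unique. The main obstacle is the third paragraph: extracting from \ref{cond:B} the rigid two-parallel-class incidence structure, which is precisely the $\mathrm{AG}(2,3)$-type grid of three simultaneous covers that \ref{cond:C} is engineered to forbid. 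Once that structure and its disjointness/transversality pattern are established, the clash with \ref{cond:C} is immediate.
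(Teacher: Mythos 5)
Your proof is correct and follows essentially the same route as the paper: reduce to the case where a second cover must consist of three pairwise disjoint cross-lines of the first, and then play conditions (B1) and (B2) against each other to get a contradiction. The only cosmetic difference is the final step --- you assemble all six cross-lines into two parallel classes and show (B2) fails outright for every $y$, whereas the paper takes the witness $y$ supplied by (B2) and produces a point lying on three distinct cross-lines, contradicting (B1).
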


\begin{proof}
Fix a choice of lines $L_1,L_2,L_3$ such that $(L_1\cup L_2\cup L_3)\cap P=P$. Since no four points are collinear, $|L_i\cap P|=3$ for each $i\in \{1,2,3\}$. So, the sets $L_i\cap P$ are pairwise disjoint. Let $L_1\cap P=\{a_1,a_2,a_3\}$, $L_2\cap P=\{b_1,b_2,b_3\}$ and $L_3\cap P=\{c_1,c_2,c_3\}$ (so that $P=\{a_1,a_2,a_3,b_1,b_2,b_3,c_1,c_2,c_3\}$). 

Suppose for contradiction that there is another choice $L_1',L_2',L_3'$ such that $\{L_1,L_2,L_3\}\neq \{L_1',L_2',L_3'\}$. Note that if $L_i=L_j'$ for any $i,j\in \{1,2,3\}$, then $\{L_1,L_2,L_3\}= \{L_1',L_2',L_3'\}$. This follows by two applications of \Cref{lemma for line uniqueness}. So, we may assume that $\{L_1,L_2,L_3\}\cap \{L_1',L_2',L_3'\}= \es$. Without loss of generality, we may suppose that $L_i'\cap P = \{a_i,b_i,c_i\}$ for each $i\in \{1,2,3\}$.

Consider $y\in P$ as mentioned in (\ref{cond:C}). Without loss of generality, we may suppose that $y=a_1$, since the situation is symmetrical. By (\ref{cond:B}), $a_1$ is contained in exactly two cross-lines with respect to $\{L_1,L_2,L_3\}$. $L_1'$ is one such cross-line. We may assume without loss of generality that another one, $k$, passes through $\{a_1,b_2,c_3\}$ (or symmetrically $\{a_1,b_3,c_2\}$). By (\ref{cond:C}), there is a cross-line, $l$, with $l\cap L_1'\neq \es$, $l\cap k\neq \es$ and $a_1\notin l$. This implies that $l\neq k$. Since $l$ intersects $L_1'$ non-trivially, $l\notin \{L_2',L_3'\}$. Since $l\cap k\neq \es$, either $b_2\in l$ or $c_3\in l$. In either case, this point would be an element of three distinct cross-lines, contradicting (\ref{cond:B}).
\end{proof}

In virtue of \Cref{Conditions-imply-unique-Li}, we fix an enumeration of the unique three lines $L_1,L_2,L_3$ covering $P$.

\begin{definition}
Given $A\subseteq P$ with $|A|=4$, we say that $A$ is a \emph{pairing four} if some two elements of $A$ pair inside $A$.
\end{definition}

\begin{remark}
    Let $P$ be a set of nine points and $A\subseteq P$ with $|A| = 4$. Then $\F_3$ isolates $A$ if and only if $A$ is not a pairing four.
\end{remark}

In fact, we have the following lemma.
\begin{lemma}\label{no-non-pairing-fours}
If $A\subseteq P$ has $|A| = 4$, then $A$ is a pairing four.
\end{lemma}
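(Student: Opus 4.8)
The plan is to prove the contrapositive: assuming $A \subseteq P$ with $|A| = 4$ is \emph{not} a pairing four, I will derive a contradiction with the standing assumptions of Case B (namely that $P$ is a $9$-point set with no four collinear points, satisfying (\ref{cond:0}), (\ref{cond:B}), (\ref{cond:C}), covered uniquely by $L_1, L_2, L_3$). Saying $A$ is not a pairing four means that for every pair of distinct $a, a' \in A$, the line $l_{a,a'}$ meets $P \setminus A$; equivalently, each of the $\binom{4}{2} = 6$ lines through two points of $A$ carries at least one of the five points of $P \setminus A$. The heart of the argument is a counting/pigeonhole clash between these six ``escaping'' lines and the severe restriction imposed by (\ref{cond:B}), which says every point of $P$ lies on exactly two cross-lines.

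First I would set up the incidence bookkeeping. Since no four points of $P$ are collinear (Case B hypothesis) and $|P \setminus A| = 5$, each of the six lines $l_{a,a'}$ contains exactly one point of $P \setminus A$ (it cannot contain two, else that line would have four $P$-points). This gives a map from the six pairs of $A$ to the five points of $P \setminus A$, so by pigeonhole at least two distinct pairs land on the same external point $q$. I would analyze how the four points of $A$ distribute across $L_1, L_2, L_3$ (the unique cover): because $|L_i \cap P| = 3$ and $|A| = 4$, the four points occupy the three lines in a pattern with sizes summing to $4$, so at least one $L_i$ meets $A$ in exactly two points, and in fact the possible distribution types (e.g. $2{+}1{+}1$ or $2{+}2{+}0$) are few. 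For a pair lying inside a single $L_i$, the line $l_{a,a'}$ \emph{is} $L_i$, so it automatically meets $P \setminus A$ (since $|L_i \cap P| = 3 > |L_i \cap A|$); the genuinely constraining pairs are the ``transversal'' ones whose two points lie on different $L_i$'s.

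The key step is then to translate ``$A$ is not a pairing four'' into a statement about cross-lines and invoke (\ref{cond:B}). A transversal pair $(a, a')$ with $a \in L_i$, $a' \in L_j$ $(i \neq j)$, together with the forced unique external point $q$ on $l_{a,a'}$, produces a line through points on multiple $L$'s; I would examine when such lines are cross-lines (recall a cross-line meets every $L_i \cap P$ in exactly one point) and count, for a suitable $P$-point, how many cross-lines it must lie on. The non-pairing condition forces enough of these transversal lines to pass through common external points that some point of $P$ is forced onto a third cross-line, directly violating (\ref{cond:B}). I would run this through the small number of distribution types of $A$ across $L_1, L_2, L_3$, handling each by the same cross-line-counting contradiction, possibly also calling on (\ref{cond:C}) and \Cref{lemma for line uniqueness} to pin down that the relevant lines are distinct and genuinely cross-lines.

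The main obstacle I anticipate is the case analysis on how $A$ and the five external points sit relative to $L_1, L_2, L_3$: the pigeonhole collision (two pairs sharing an external point $q$) can occur in several configurations, and I must verify in each that the resulting incidences force a point onto three cross-lines rather than merely reshuffling the two allowed ones. Controlling this requires carefully using that $q$ is the \emph{unique} external point on each of its two lines (no-four-collinear) so that the two pairs through $q$ involve four distinct $A$-points spread across the $L_i$ in a way that manufactures the extra cross-line. Making sure no degenerate sub-case slips through — particularly distinguishing which of the six pairs are transversal versus internal to a single $L_i$ — is where the bulk of the careful (though elementary) work lies.
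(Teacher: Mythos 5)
Your overall strategy is sound and is essentially the paper's: both proofs reduce ``$A$ is not a pairing four'' to a statement about cross-line incidences and then count against condition (\ref{cond:B}), with a case split on how $A$ distributes over the unique cover $L_1,L_2,L_3$ (the paper anchors the count at a single point of $A$ rather than running over all six pairs, and your pigeonhole on the five external points is true but not actually load-bearing --- the contradiction comes from counting cross-lines through points of $P$, not from two lines sharing an external point). Your bookkeeping is also correct: under the non-pairing assumption the distribution $3{+}1{+}0$ is impossible, each transversal line picks up exactly one external point, and that point must lie on the third $L_i$, so every transversal non-pairing line is a cross-line.

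There is, however, one concrete place where your stated closing mechanism fails. You claim the contradiction always takes the form ``some point of $P$ is forced onto a third cross-line.'' That works for the $2{+}1{+}1$ distribution (say $A=\{a_1,a_2,b_1,c_1\}$: the three lines $l_{a_1b_1}$, $l_{a_2b_1}$, $l_{b_1c_1}$ are pairwise distinct cross-lines through $b_1$). But in the $2{+}2{+}0$ distribution, say $A=\{a_1,a_2,b_1,b_2\}$, the four transversal cross-lines $l_{a_ib_j}$ give each of $a_1,a_2,b_1,b_2$ exactly two cross-lines and each $c_k$ at most two (since two lines through the same $a_i$ or the same $b_j$ cannot share a $c_k$), so no point is ever pushed onto a third cross-line. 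The violation of (\ref{cond:B}) there is from below: the two cross-line slots of $a_1$ and $a_2$ are exhausted by lines missing $b_3$, so every $a_3$-cross-line must pass through $b_3$, leaving $a_3$ (equivalently $b_3$) with at most one cross-line. This is exactly how the paper closes its Case I. Since (\ref{cond:B}) demands \emph{exactly} two cross-lines per point, your plan survives once you allow the contradiction to be ``too few'' as well as ``too many''; as written, a reader following your mechanism literally would get stuck in the $2{+}2{+}0$ case.
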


\begin{proof}
To fix notation, let $P = \{a_1,a_2,a_3,b_1,b_2,b_3,c_1,c_2,c_3\}$ where $L_1\cap P= \{a_1,a_2,a_3\}$, $L_2\cap P=\{b_1,b_2,b_3\}$ and $L_3\cap P=\{c_1,c_2,c_3\}$.

Suppose first that $|A|=4$. Suppose without loss of generality that $a_1\in A$. If there is $x\in A\setminus \{a_1\}$ that pairs with $a_1$ in $A$ then we are done. So, suppose not. Then for each $x\in A\setminus \{a_1\}$, there is a line $l$ with $x,a_1\in l_x$ and $(P\setminus A)\cap l \neq \emptyset$. Such an $l_x$ must be an $a_1$-cross-line or be $L_1$. Since any such line $l_x$ passes through no more than three points (including $a_1$), $|(A\setminus \{a_1\})= l_x|\leq 1$. By (\ref{cond:B}), there are exactly two $a_1$-cross-lines. Since no four points are collinear, $l_x \neq l_{x'}$ whenever $x\neq x'$. So, since $|A\sm \{a_1\}|=3$, the $a_1$-cross-lines and $L_1$ each contain exactly one point in $A\setminus \{a_1\}$.
Without loss of generality, we may assume that the $a_1$-cross-lines pass through $\{a_1,b_1,c_1\}$ and $\{a_1,b_2,c_2\}$. We consider two cases, the rest are symmetric.

\noindent\textbf{Case I:} $A$ contains two elements of $L_1$ and two elements of one of $L_2$ or $L_3$. Without loss of generality, we assume that $A=\{a_1,a_2,b_1,b_2\}$. Suppose for contradiction that both $b_1,b_2$ are in $a_2$-cross-lines. By (\ref{cond:B}) and the fact that each of $b_1,b_2$ would be contained in an $a1$- and $a_2$-cross-line (two cross-lines in total), any $a_3$-cross-line must pass through $b_3$. Thus, there is only one $a_3$-cross-line, contradicting (\ref{cond:B}). Thus, one of $b_1,b_2$ pairs with $a_2$ in $A$ and we may isolate $A$.

\noindent\textbf{Case II:} $A$ contains two elements from $L_1$ and one element from each of $L_2, L_3$. Without loss of generality, we assume $A=\{a_1,b_1,c_2,a_2\}$. If $c_2,b_1,a_2$ are collinear, we are done since no four points are collinear. So, any $b_1$-cross-line intersects $\{c_2,a_2\}$ at most once. Since $\{a_1,b_1,c_1\}$ is a $b_1$-cross-line, by (\ref{cond:B}), at most one of $a_2,c_2$ is in a $b_1$-cross-line. Thus, $b_1$ pairs with at least one of $a_2,c_2$. 
\end{proof}

\begin{definition}
    Let $P\subseteq\R^2$ be a set of nine points. We say that $P$ is an \emph{$\mathsf{X}$-configuration} if there is a labelling of $P$ by $\{a_1,a_2,a_3,b_1,b_2,b_3,c_1,c_2,c_3\}$ such that exactly the following non-trivial collinearities occur. $\{a_1,a_2,a_3\}$, $\{b_1,b_2,b_3\}$, $\{c_1,c_2,c_3\}$, $\{a_1,b_1,c_2\}$, $\{a_1,b_2,c_3\}$, $\{a_2,b_1,c_1\}$, $\{a_2,b_3,c_3\}$, $\{a_3,b_2,c_1\}$, and $\{a_3,b_3,c_2\}$.
\end{definition}

\begin{example}\label{X-configuration}
This diagram shows an $\mathsf{X}$-configuration.

\begin{center}
\begin{tikzpicture}[scale=0.8]
\tikzset{dot/.style={fill=black,circle}}
{
\draw (0,0) -- (6,0);
\draw (1,1) -- (4,1);
\draw (0,2) -- (6,0);
\draw (0,0) -- (2,2);
\draw (0,0) -- (6,2);
\draw (2,0) -- (0,2);
\draw (2,0) -- (6,2);
\draw (6,0) -- (0,2);
\draw (6,0) -- (2,2);
\draw (0,2) -- (6,2);
}

\node[above, right] at (-0.7,0.2) {$a_1$};
\node[dot] at (0,0){};
\node[above, right] at (1.9,0.4) {$a_2$};
\node[dot] at (2,0){};
\node[above, right] at (6.2,0.2) {$a_3$};
\node[dot] at (6,0){};
\node[above, right] at (0.3,1.2) {$b_1$};
\node[dot] at (1,1){};
\node[above, right] at (2.6,1.4) {$b_2$};
\node[dot] at (3,1){};
\node[above, right] at (4.2,1) {$b_3$};
\node[dot] at (4,1){};
\node[above, right] at (0.2,2.2) {$c_1$};
\node[dot] at (0,2){};
\node[above, right] at (2.2,2.2) {$c_2$};
\node[dot] at (2,2){};
\node[above, right] at (6.2,2.2) {$c_3$};
\node[dot] at (6,2){};

\end{tikzpicture}      
\end{center}
    
\end{example}

\begin{remark}
    If $P$ is an $\mathsf{X}$-configuration, then it cannot satisfy condition (\ref{cond:C}).
\end{remark}

\begin{proposition}[Sufficiency] \label{3linesleft'}
$P$ is shattered by $\F_3$.
\end{proposition}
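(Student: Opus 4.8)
The plan is to mirror the structure of Case A: first read off the rigid combinatorial skeleton forced by the hypotheses, then verify that every $A\subseteq P$ is isolated by arguing on $|A|$. By \Cref{Conditions-imply-unique-Li} the covering lines $L_1,L_2,L_3$ are unique, each is a $3$-line, and $P$ is partitioned as $L_1\cap P=\{a_1,a_2,a_3\}$, $L_2\cap P=\{b_1,b_2,b_3\}$, $L_3\cap P=\{c_1,c_2,c_3\}$. Since no four points are collinear, every line meeting $P$ in at least three points is either one of the $L_i$ or a cross-line of the form $\{a_i,b_j,c_k\}$; by (\ref{cond:B}) each point lies on exactly two cross-lines, so a double count ($9$ points, each on $2$ cross-lines, each cross-line carrying $3$ points) shows there are exactly six cross-lines, and each point lies on its axis line together with precisely two cross-lines. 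I would record the further rigidity that the projections of the six cross-lines onto the pairs $(a_i,b_j)$, $(a_i,c_k)$, $(b_j,c_k)$ are simple $2$-regular bipartite graphs, hence each is a single $6$-cycle; this is the combinatorial object on which the whole argument runs.

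Next I would reformulate isolation dually: $\F_3$ isolates $A$ if and only if $A$ can be covered by at most three lines whose union avoids $B:=P\setminus A$; call such a line \emph{$A$-admissible}. The easy cases dispose of most of the range. For $|A|\le 3$ one covers each point of $A$ by a generic $1$-line missing all other points of $P$; for $|A|=4$ the set is a pairing four by \Cref{no-non-pairing-fours}, so it is isolated by the remark preceding that lemma; and $|A|=9$ is exactly (\ref{cond:0}). This leaves the genuine cases $5\le|A|\le 8$, equivalently $1\le|B|\le 4$, which I would treat by the size of the complement.

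For $|B|=1$, say $B=\{a_1\}$, the covering must avoid $a_1$; I would choose a cross-line through $a_2$ and a cross-line through $a_3$ whose $b$-points are the two $b$'s lying on $a_1$'s cross-lines (the $6$-cycle structure guarantees these two cross-lines can be chosen through different $a$-points), so that the unique leftover $b$-point is the one \emph{not} on any cross-line through $a_1$; then the $2$-line joining that $b$-point to the leftover $c$-point is automatically $A$-admissible, since a line through $b_3$ and a $c$-point passes through $a_1$ only if $\{a_1,b_3,c\}$ is a cross-line, which it is not. The remaining possibility, that the two chosen cross-lines are forced to share a $c$-point, is precisely the coincidence excluded by (\ref{cond:C}). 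For $|B|\in\{2,3,4\}$ the aim is the same: partition $A$ into at most three blocks, each spanned by an $A$-admissible axis line, cross-line, or $2$-line, using the $6$-cycle incidences to steer each chosen $3$-line away from the (now larger) forbidden set $B$.

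The main obstacle is uniformly guaranteeing that the required admissible lines exist simultaneously, and this is exactly where (\ref{cond:C}) is essential. The obstruction to covering is the $\mathsf{X}$-configuration: there the six cross-lines are arranged so that every attempt to avoid a forbidden point forces the shared-$c$ phenomenon above, and indeed by the remark following \Cref{X-configuration} such a configuration violates (\ref{cond:C}). I therefore expect the real content to be a translation of (\ref{cond:C}) into the statement that the cross-line skeleton is not the $\mathsf{X}$-configuration, after which a finite check --- most delicately in the case $|A|=5$, where four points must be avoided and there is least room to maneuver --- shows every subset admits an isolating cover. An attractive alternative, which I would pursue if the case analysis becomes unwieldy, is to show that (\ref{cond:B}) and (\ref{cond:C}) pin down the cross-line incidence structure up to isomorphism (three $6$-cycle projections, none of $\mathsf{X}$-type, as realised by \Cref{ex:set2}), reducing sufficiency to verifying shatterability of a single canonical configuration, where the symmetry group cuts the number of subsets to be checked dramatically.
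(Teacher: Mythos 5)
Your setup (the unique covering triple from \Cref{Conditions-imply-unique-Li}, exactly six cross-lines with each point on exactly two of them, and the reduction of isolation to finding at most three lines covering $A$ and avoiding $P\setminus A$) is correct and matches the skeleton of the paper's argument, as does your dispatch of $|A|\le 4$ and $|A|=9$. The genuine gap is that the substantive range $5\le |A|\le 8$ is not actually proved. For $|A|\in\{5,6,7\}$ you give no argument at all: ``the aim is the same'' and ``a finite check \dots shows every subset admits an isolating cover'' is a plan, not a proof, and these cases are exactly Claims~2 and~3 of the paper's proof, which require \Cref{4avoid2}, repeated appeals to \Cref{no-non-pairing-fours}, and a real case split on whether $A$ contains three collinear points. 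For $|A|=8$ your recipe hinges on the assertion that its only failure mode --- the chosen cross-lines through $a_2$ and $a_3$ being forced to share a $c$-point --- ``is precisely the coincidence excluded by (\ref{cond:C}).'' That is not justified, and at the level of the incidence data your argument uses it is false: one can exhibit a six-cross-line pattern satisfying (\ref{cond:B}) in which the cross-lines through $a_2,b_1$ and through $a_3,b_2$ share $c_1$ while (\ref{cond:C}) still holds (witnessed by $y=a_2$); the set $P\setminus\{a_1\}$ is still isolable there, but only by discarding your prescribed pair and choosing a different, disjoint pair of cross-lines. The paper's Claim~1 is precisely the analysis of which alternative pair to take when the first choice collides, and it reaches the $\mathsf{X}$-configuration contradiction only at the end of one sub-branch; that work cannot be replaced by a one-line appeal to (\ref{cond:C}).

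Two further cautions. First, the $6$-cycle observation, while correct (each bipartite projection of the cross-line set is simple and $2$-regular on $3+3$ vertices, hence a single $6$-cycle), does not carry the information you need: the $\mathsf{X}$-configuration of \Cref{X-configuration} has the same three $6$-cycle projections, so (\ref{cond:C}) lives in how the projections interlock, not in any single one. Second, your proposed alternative --- showing that (\ref{cond:B}) and (\ref{cond:C}) determine the incidence structure up to isomorphism and then checking one canonical configuration --- is viable (it is essentially what Case~III of \Cref{iso3} establishes later in the paper), but as written it is offered as a contingency rather than executed, so it does not close the gap either.
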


\begin{proof}
As in \Cref{no-non-pairing-fours}, we fix notation. Let $P = \{a_1,a_2,a_3,b_1,b_2,b_3,c_1,c_2,c_3\}$ where $L_1\cap P= \{a_1,a_2,a_3\}$, $L_2\cap P=\{b_1,b_2,b_3\}$ and $L_3\cap P=\{c_1,c_2,c_3\}$.

We must show that each $A\subseteq P$ can be isolated by $\F_3$. Let $A\subseteq P$. Trivially, if $|A|\leq 3$, then the result follows. By (\ref{cond:0}), the same is true if $|A|=9$. By \Cref{no-non-pairing-fours}, $A$ may be isolated if $|A|=4$.

\textit{Claim 1.} If $|A|=8$, then $A$ may be isolated by $\mathcal{F}_3$.

\textit{Proof of claim 1.} Without loss of generality, let $A=P\setminus \{a_1\}$. By (\ref{cond:C}) there are two $a_2$-cross-lines $l_2,l_2'$ and two $a_3$-cross-lines, $l_3,l_3'$, all of which are distinct. Without loss of generality, after some relabelling, we have that $l_3\cap P=\{a_3,b_3,c_3\}$ and $l_3'\cap P=\{a_3,b_2,c_2\}$. At least one of the $a_2$-cross-lines does not intersect both of these sets, otherwise $\{b_2,b_3,c_2,c_3\}$ would not be a pairing four, contradicting \Cref{no-non-pairing-fours}.

So, perhaps after renumbering, we may pick an $a_2$-cross-line $l_2$ disjoint in $P$ from $l_3$. If $l_2 \cap l_3' \cap P =\es$ as well, we are done since $A\setminus (l_2\cup l_3)$ must be entirely contained in $l_3'$ and thus we can isolate $A$ using $l_2,l_3,l_3'$ (none of these lines pass through $a_1$ since $P$ does not contain four collinear points). So, suppose otherwise: without loss of generality suppose that $c_2\in l_2$, so that $l_2\cap P=\{a_2,b_1,c_2\}$. By (\ref{cond:B}), $c_2\notin l_2'$. So, if $l_2'$ intersects both $l_3$ and $l_3'$ in $P$, we have that $b_2,c_3\in l_2'$. In this case, $\{a_2,a_3,c_2,c_3\}$ is not a pairing four, contradicting \Cref{no-non-pairing-fours}. So, this cannot be the case. By the same reasoning as above, we are done if $l_2'$ is disjoint in $P$ from both $l_3$ and $l_3'$. We are left with two cases to consider.

\noindent\textbf{Case I:} $l_2'$ intersects $l_3'$ in $P$. Now, $l_2'$ must pass through one of $b_2,b_3$, since $b_1 \in l_2$, and does not intersect $l_3$. Since $b_3\in l_3$, we must have that $b_2\in l_2'$. Thus, since $c_3\in l_3$ and $\{b_2,c_2\} \sub l_3'$, $l_2'\cap P= \{a_2,b_2,c_1\}$. We may therefore isolate $A$ using $l_3,l_2,l_2'$.

\noindent\textbf{Case II:} $l_2'$ intersects $l_3$ in $P$. Symmetrically to Case I, we have that $l_2'\cap P= \{a_2,b_3,c_1\}$. Now, if $a_1,b_2,c_1$ are not collinear, then we are done, isolating $A$ using $l_2,l_3,l_{b_2 c_1}$. If they are collinear, then, by (\ref{cond:B}), the $a_1$-cross-lines must be as follows: $l_1=\{a_1,b_2,c_1\}$, $l_1'=\{a_1,b_1,c_3\}$. This forces $P$ to be an $\mathsf{X}$-configuration in which (\ref{cond:C}) fails. So, we are done.\hfill$\blacksquare_{\textit{Claim }1}$

\textit{Claim 2.} If $5\leq |A|\leq 7$ and there are three points in $A$ collinear, $A$ may be isolated by $\mathcal{F}_3$.

\textit{Proof of claim 2.} If $|A|\leq5$, the claim is trivial. If there are two disjoint sets of three points collinear in $A$ we are also done. So, if $|A|=7$ we are done by \Cref{4avoid2}. It remains to consider the case that $|A|=6$ and there are not two disjoint sets of three collinear points.

If some $L_i\cap P$ (without loss of generality suppose $i=1$) is contained in $A$, then we may assume that there are $x,y\in A$ with $x\in L_2$ and $y\in L_3$. Then we may isolate $A$ using $L_1$, $l_{xy}$ and a line through the remaining point (note that $l_{xy}$ intersects with at most some element of $L_1\cap P$, by assumption. 

So, we may suppose that this is not the case, and that there is a cross-line $l$ with $l\cap P\subseteq A$ (three collinear points in $P$ must lie on some $L_i$ or a cross-line). Without loss of generality, suppose that $l\cap P= \{a_1,b_1,c_1\}$, $A\setminus l= \{a_2,b_2,c_2\}$ and that $a_2,b_2,c_2$ do not lie on a common cross-line. We are done unless the following sets lie on cross-lines: $\{a_2,b_2,c_3\}$, $\{a_2,b_3,c_2\}$, $\{a_3,b_2,c_2\}$. In this case, by (\ref{cond:B}), we see that $\{a_1,b_3,c_3\}$ must also lie on a cross-line (there are two $a_1$-cross-lines and neither may pass through $b_2$ or $c_2$). But then $\{b_2,b_3,c_2,c_3\}$ is not a pairing four, contradicting \Cref{no-non-pairing-fours}.\hfill$\blacksquare_{\textit{Claim }2}$

\textit{Claim 3.} If $5\leq |A|\leq 7$ and there are no three points in $A$ collinear, $A$ may be isolated by $\mathcal{F}_3$.

\textit{Proof of claim 3.} It is easy to see that there are no sets of size $7$ with no three points collinear. So, suppose first that $|A|=6$ and assume $a_1\in A$. We may suppose that $\{a_1,b_1,c_1\}$ and $\{a_1,b_2,c_2\}$ lie in cross-lines. Since there are no three points in $A$ collinear, $L_1\cap P \nsubseteq A$. So, without loss of generality, we may assume that $a_3\notin A$. Since $A$ may contain at most two points from each of $L_2,L_3$ and the $a_1$-cross-lines, we may further assume that $b_1,c_2\notin A$ and $a_2,b_2,b_3,c_1,c_3\in A$. Given this we must have that $\{a_2,b_3,c_1\}$ and $\{a_2,b_2,c_3\}$ are sets of non-collinear points. It follows that the following define the $a_2$-cross-lines: $\{a_2, b_1, c_3\}$ and $\{a_2, b_3, c_2\}$. This leaves two cases, since $a_3$ lies on exactly two cross-lines.

\noindent\textbf{Case I:} $\{a_3,b_3,c_1\}$ and $\{a_3,b_2,c_3\}$ lie on cross-lines. We may check that in this case $P$ is an $\mathsf{X}$-configuration, and therefore (\ref{cond:C}) fails.

\noindent\textbf{Case II:} $\{a_3,b_3,c_3\}$ and $\{a_3,b_2,c_1\}$ lie on cross-lines. Given this full characterisation of cross-lines we may see which elements of $A$ pair inside $A$, namely (at least): $\{a_1,c_3\}$, $\{a_2,b_2\}$ and $\{b_3, c_1\}$. This allows us to isolate $A$. 

Now suppose that $|A|=5$. Since no three points in $A$ collinear, we must choose one point from one $L_i$ and 2 points from each of the other two $L_i$. Suppose that $a_1\in A$ and $a_2,a_3\notin A$. Then, assuming (without loss of generality) that $\{a_1,b_1,c_1\}$ and $\{a_1,b_2,c_2\}$ lie on cross-lines, we must have (without loss of generality) that $b_1,c_2\notin A$ so that $b_2,c_1,b_3,c_3\in A$. Now, either $\{c_1,b_2\}$ or $\{c_1,b_3\}$ pair inside $A$ (otherwise $c_1$ is in three cross-lines). For the same reason, $a_1$ pairs with $c_3$ in $A$. So, we are done in either case.\hfill$\blacksquare_{\textit{Claim }3}$

This exhausts all cases, so $A$ may be isolated by $\mathcal{F}_3$ for any $A\subseteq P$.
\end{proof}

\begin{proposition}[Necessity]\label{necessity-B}
If $P\subseteq \mathbb{R}^2$ with $|P|=9$ is shattered by $\F_3$ and does not contain four collinear points then $P$ must satisfy (\ref{cond:0}), (\ref{cond:B}), and (\ref{cond:C}).
\end{proposition}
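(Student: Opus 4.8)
The plan is to prove the necessity of (\ref{cond:0}), (\ref{cond:B}) and (\ref{cond:C}) by a combination of easy counting and a careful case analysis mirroring \Cref{necessity-A}. Assume $P\subseteq\R^2$ with $|P|=9$ is shattered by $\F_3$ and contains no four collinear points. First, (\ref{cond:0}) is immediate: since all of $P$ must be isolated, $P$ must lie on three lines, so $m_P\leq 3$ (in fact, since no four points are collinear, each covering line meets $P$ in exactly three points, so $m_P=3$). Throughout we fix a covering $\{L_1,L_2,L_3\}$ with $L_i\cap P$ of size $3$; by \Cref{Conditions-imply-unique-Li}-type reasoning these line-triples are the only covers, but for necessity we must argue directly from shatterability and cannot presuppose the conditions.

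Next I would establish (\ref{cond:B}), that every $x\in P$ lies on exactly two cross-lines. The key leverage is the following: for each $x$ and any fixed covering $\{L_1,L_2,L_3\}$, consider the $8$-point set $A=P\setminus\{x\}$. To isolate $A$ by three lines, and since $P$ has no four collinear points, the three isolating lines can each meet $P$ in at most three points; removing just $x$ from a $9$-point set forces these three lines to partition $P\setminus\{x\}$ into three disjoint triples, each triple being three collinear points lying on a cross-line or on some $L_i$. I would count cross-lines through $x$ via a double-counting / pairing argument: each cross-line through $x$ pairs $x$ with one point on each of the other two lines, and the constraint that all $8$-subsets $P\setminus\{x\}$ are isolable, combined with \Cref{no-non-pairing-fours} (every $4$-set is a pairing four), pins down that there are at least two and at most two such cross-lines. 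The ``at most two'' direction is where a three-cross-line configuration at $x$ would force some four-element subset to fail to be a pairing four, contradicting shatterability; the ``at least two'' direction comes from needing two disjoint triples among $P\setminus\{x\}$ to isolate the $8$-set.

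Finally, for (\ref{cond:C}) I would argue by contradiction: suppose there is a covering for which no point $y$ admits two $y$-cross-lines $l_y,l_y'$ together with a cross-line $l\not\ni y$ meeting both. Using the labelling $P=\{a_i,b_i,c_i\}$ with $L_1,L_2,L_3$ the coordinate lines, I expect this negation to force the cross-line incidences to be so rigid that $P$ becomes an $\mathsf{X}$-configuration (the \emph{Remark} preceding \Cref{3linesleft'} notes precisely that an $\mathsf{X}$-configuration fails (\ref{cond:C})). I would then exhibit a specific subset $A\subseteq P$ — most naturally an $8$-set $P\setminus\{a_1\}$ as in Claim 1 of \Cref{3linesleft'}, or a suitable $6$-set — which cannot be isolated in an $\mathsf{X}$-configuration, contradicting that $\F_3$ shatters $P$. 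This shows shatterability is incompatible with the failure of (\ref{cond:C}).

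The main obstacle I anticipate is the (\ref{cond:B}) step, specifically proving that \emph{exactly} two cross-lines (neither fewer nor more) pass through each point, uniformly over \emph{every} choice of covering lines. The ``exactly two'' count is delicate because it must be derived purely from the isolability of the various $8$-point and smaller subsets together with the no-four-collinear hypothesis, without circularly invoking the conditions being proved; handling the interaction between cross-lines and the covering lines $L_i$ (a triple on some $L_i$ is collinear but not via a cross-line) requires care. The (\ref{cond:C}) step is conceptually cleaner once one identifies that its failure collapses $P$ to an $\mathsf{X}$-configuration, reducing the problem to checking non-isolability of one explicit subset in that highly symmetric configuration.
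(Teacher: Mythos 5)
Your overall strategy matches the paper's: derive (\ref{cond:0}) immediately, prove (\ref{cond:B}) by showing every point lies on exactly two cross-lines using isolability of the $8$-sets $P\setminus\{x\}$ and the $4$-sets, and prove (\ref{cond:C}) by showing its failure forces an $\mathsf{X}$-configuration in which some $P\setminus\{x\}$ cannot be isolated. The ``at most two'' half of (\ref{cond:B}) (a $3$-node forces a non-pairing four) and the whole of the (\ref{cond:C}) step are exactly the paper's arguments, and your appeal to ``every $4$-set is a pairing four'' is legitimate here since it follows directly from shatterability rather than from the sufficiency-direction \Cref{no-non-pairing-fours}.

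The genuine gap is the lower bound in (\ref{cond:B}): you assert that each $x$ lies on \emph{at least} two cross-lines because isolating $P\setminus\{x\}$ ``needs two disjoint triples among $P\setminus\{x\}$.'' But those two disjoint $3$-point cross-lines by construction \emph{avoid} $x$; they say nothing about cross-lines \emph{through} $x$, so this one-line justification does not establish the claim. This is precisely where the paper spends most of its effort: Claim~1 of its proof shows there are at least six distinct cross-lines in total, via a case analysis on configurations with only four or only five cross-lines (each ruled out by exhibiting an $x$ for which the two disjoint isolating cross-lines cannot exist), and only then does the count ``six cross-lines, each with three points, nine points each on at most two'' force exactly two through every point. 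Your proposal neither supplies this case analysis nor an alternative route to the lower bound, and the ``double-counting / pairing argument'' you invoke is left unspecified at exactly the point you yourself flag as the main obstacle. Until that count is actually carried out, the necessity of (\ref{cond:B}) — and hence of (\ref{cond:C}), whose proof presupposes (\ref{cond:B}) when it forces the $\mathsf{X}$-configuration — is not established.
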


\begin{proof}
Suppose that $P$ is shattered by $\mathcal{F}_3$ and does not contain four collinear points. It is again trivial that (\ref{cond:0}) holds, so we may partition $P$ into the following sets of collinear points where no point from one set is collinear with any two from another: $A:=\{a_1,a_2,a_2\}$, $B:=\{b_1,b_2,b_3\}$, and $C:=\{c_1,c_2,c_3\}$ (assume these lie on $L_1,L_2,L_3$ respectively). If $x$ is a point in $P$ then to isolate $P\sm \{x\}$ we must find lines $l_x$ and $l_x'$ with $x\notin l_x\cup l_x'$, $l_x\cap l_x'\cap P=\emptyset$, $|l_x\cap P|=|l_x'\cap P|=3$ and $P\sm (l_x\cup l_x')$ not a set of collinear points - call this requirement ($\dag$). We see that this is not possible if $L_i \in \{l_x,l_x'\}$ for some $i\in \{1,2,3\}$. Thus, we must have that $l_x,l_x'$ are cross-lines with respect to $\{L_1,L_2,L_3\}$.

\textit{Claim 1.} There are six distinct cross-lines.

\textit{Proof of Claim 1.} It is clear that each point in $P$ must be an element of a cross-line. For example, choose the point $a_1$. Then we may apply the above argument with $x=a_2$ and we see that $a_1$ is in $l_{a_2}$ or $l_{a_2}'$. Therefore, we easily see that we must have at least four cross-lines. Indeed, if there are only three cross-lines and all points are in some cross-line, we have that the complement of any two cross-lines in $P$ is a set of three collinear points. So, requirement ($\dag$) fails. So, we go progressively by cases.

\noindent\textbf{Case I:} Suppose we have only four cross-lines. Then we may suppose (without loss of generality) that $a_1$ is in two cross-lines, $l_1,l_1'$ while $a_2,a_3$ are each in just one, $l_2$ and $l_3$ respectively. Now, we must have that $\collin(P\setminus (l_2\cup l_3))\leq 3$, by requirement ($\dag$), so there are two cases to consider.

\textbf{Subcase I(a):} $l_j\cap l_1 \cap P \neq \es$ and $l_j\cap l_1'\cap P\neq \es$ for some $j\in \{2,3\}$. Suppose without loss of generality that $j=2$. But then we may let $x=a_3$ and note that the requirement ($\dag$) fails.

\textbf{Subcase I(b):} Without loss of generality, $l_1\cap l_2 \cap P \neq \es$ and $l_1'\cap l_3\cap P \neq \es$. Then choose $x\in l_1\cap l_2 \cap P$. Requirement ($\dag$) fails for this choice since the only two cross-lines not containing $x$ are not disjoint in $P$.

So, there are at least five cross-lines.

\noindent\textbf{Case II:} Suppose we have only five cross-lines. There are two subcases to consider.

\textbf{Subcase II(a):} Suppose some element of $P$ is a 3-node. Without loss of generality, suppose that $a_1$ is a 3-node. Then each point in $C$ is in an $a_1$-cross-line. 

Suppose for contradiction that $c_1$ is a 3-node. We may suppose without loss of generality that $\collin(\{a_1,b_1,c_1\})=3$ and therefore that $\collin(\{a_1,b_2,c_1\})=\collin(\{a_1,b_3,c_1\})=2$. But then $\{a_1,c_1,b_2,b_3\}$ is not a pairing four, since each of $b_2,b_3$ are in an $a_1$cross-line and a distinct $c_1$-cross-line. So, no point in $C$ is a 3-node.

Therefore, we have without loss of generality that $c_1,c_2$ are 2-nodes and $c_3$ is a 1-node. Now, each element of $B$ must be in a $c_1$- or $c_2$-cross-line. We see this by supposing otherwise, so that (without loss of generality) $b_3$ is not in either a $c_1$- or a $c_2$-cross-line. Then $b_1,b_2$ are contained in two such cross-lines each and $\{c_1,c_2,b_1,b_2\}$ is not a pairing four, which is a contradiction. So, letting $l$ be the $c_3$-cross-line, we may assume that $l\cap B=\{b_1\}$ and that $b_1$ is in a $c_1$-cross-line. Since $a_1$ is a 3-node, we must have that $l\cap A=\{a_1\}$ (since some cross-line passes through $a_1$ and $c_1$, so it must be the unique one). We therefore see that $a_1$ is contained in the $c_1$-cross-line that does not pass through $b_1$. But then we may let $x=c_2$ and conclude that the requirement ($\dag$) fails, since every $c_1$-cross-line intersects the unique $c_3$-cross-line.

\textbf{Subcase II(b):} Suppose there is no 3-node, so that each of $A$,$B$ and $C$ consists of two 2-nodes and one 1-node. Suppose $a_1,a_2$ are 2-nodes and $a_3$ is a 1-node. To avoid failure of the requirement ($\dag$) for $x=a_1$, we see that each of the $a_1$-cross-lines must be intersected by some $a_2$- or $a_3$-cross-line (not necessarily the same one).

Suppose first that no $a_2$- or $a_3$-cross-line intersects both $a_1$-cross-lines. Without loss of generality we may assume that cross-lines pass through the following sets: $\{a_1,b_1,c_1\}$, $\{a_1,b_2,c_2\}$. On these assumptions, we must have (without loss of generality) that cross-lines pass through $\{a_2,b_3,c_2\}$ and $\{a_2,b_1,c_3\}$. Again using the assumption, we must therefore have that a cross-line passes through either $\{a_3,b_3,c_1\}$ or $\{a_3,b_2,c_3\}$. In either case, the requirement ($\dag$) fails if we set $x=a_1$. We see this since in either case, there is only one $a_2$-cross-line that is disjoint from the $a_3$-cross-line in $P$ and the complement of these two lines is an $a_1$-cross-line.

So, now suppose that some cross-line intersects both $a_1$-cross-lines. 

Arguing as in Subcase I(a) (in particular, concluding that ($\dag$) fails for $x=a_2$), we see that this cross-line cannot be the unique $a_3$-cross-line, so without loss of generality we may assume that the following set lies on a cross-line: $\{a_2,b_2,c_1\}$. To avoid producing subset of size four that is not pairing, we must have that a cross-line passes through $\{a_2,b_3,c_3\}$ as well. (If $\collin(\{a_2,b_3,c_2\})=3$, then $\{a_1,a_2,c_1,c_2\}$ is not a pairing four, and if $\collin(\{a_2,b_1,c_3\})=3$, then $\{a_1,a_2,b_1,b_2\}$ is a non-pairing four. These are the only other options for the second $a_2$-cross-line.) Now consider where the $a_3$-cross-line, call it $l$, may pass. It may not intersect $b_2$ or $c_1$ since there are no 3-nodes. It may not pass through both $b_1,c_2$ since this would render $\{b_1,b_2,c_1,c_2\}$ not a pairing four. So, if it passes through $b_1$ it must pass through $c_3$ and if it passes through $b_3$ it must pass through $c_2$ (note that $b_3,c_3$ are already on an $a_2$-cross-line). In the first case taking $x=c_3$ and in the second case taking $x=b_3$ forces the requirement ($\dag$) to fail. In the first case, with $x=c_3$, we see that any $c_1$-cross-line intersects the unique $c_2$-cross-line. In the second case, with $x=b_3$, we see that any $b_2$-cross-line intersects the unique $b_1$-cross-line.

So, there must be at least six cross-lines. \hfill$\blacksquare_{\textit{Claim }1}$

We then show that there is no 3-node, from which we may conclude (\ref{cond:B}). Suppose for contradiction that $a_1$ is a 3-node. Then without loss of generality we may assume that $a_2$ is a 2-node or a 3-node, so that $b_1,b_2$ (without loss of generality) are each contained in $a_2$-cross-lines. But then $\{a_1,a_2,b_1,b_2\}$ is not a pairing four, which is a contradiction. So, (\ref{cond:B}) holds.

Now assume that (\ref{cond:0}), (\ref{cond:B}) hold but (\ref{cond:C}) fails. Then for any $y\in P$ there is no cross-line that intersects both $y$-cross-lines once. So, we may assume without loss of generality the following sets lie on cross-lines: $\{a_1,b_1,c_1\}$, $\{a_1,b_2,c_2\}$, $\{a_2,b_3,c_2\}$, $\{a_2,b_1,c_3\}$ (if we had $\{a_2,b_3,c_3\}$ lying on a cross-line, the other $a_2$-cross-line would grant (\ref{cond:C}). By our assumption, we may assume that $\{a_3,b_2,c_3\}$, $\{a_3,b_3,c_1\}$ also lie on cross-lines. Since there may be no more cross-lines, this set is an $\mathsf{X}$-configuration, as in \Cref{X-configuration}. In this case, requirement ($\dag$) fails for any $x\in P$. So, (\ref{cond:C}) holds as well.
\end{proof}

As an easy consequence of either Case A or Case B we obtain the following corollary.

\begin{corollary}\label{vc-3-lines}
    $\VC(\F_3) = 9$.
\end{corollary}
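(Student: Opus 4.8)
The final statement to prove is \Cref{vc-3-lines}, namely that $\VC(\F_3) = 9$.

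\medskip

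The plan is to combine the upper and lower bounds that have already been established in the preceding development. For the upper bound $\VC(\F_3) \leq 9$, I would simply invoke the \hyperref[sec:vc-3-lines]{opening proposition of this section}, which shows directly that no set $P \subseteq \R^2$ with $|P| = 10$ can be shattered by $\F_3$: any such $P$ would be forced onto three lines, one of which meets $P$ in four points, and removing that line would exhibit a set shattered by $\F_2$ of size greater than $\VC(\F_2) = 5$, contradicting \Cref{vc-2-lines}.

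\medskip

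For the lower bound $\VC(\F_3) \geq 9$, it suffices to exhibit a single set $P$ of nine points that is shattered by $\F_3$. Here I would appeal to the sufficiency results proved in the two cases of this section: \Cref{ex:set1} gives an explicit configuration of nine points containing four collinear points and satisfying the axioms $\{(\ref{cond:0}),(\ref{cond:b}),(\ref{cond:c})\}$ of Case A, and \Cref{3linesleft} then guarantees this set is shattered by $\F_3$. (Alternatively, \Cref{ex:set2} together with \Cref{3linesleft'} furnishes such a set via Case B; this is exactly what the phrase ``as an easy consequence of either Case A or Case B'' in the corollary's lead-in anticipates.) Either witness establishes that a shattered set of size $9$ exists, so $\VC(\F_3) \geq 9$.

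\medskip

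Combining the two bounds yields $\VC(\F_3) = 9$. There is no real obstacle here: the corollary is genuinely immediate once the machinery of the section is in place, since all the substantive work — the upper bound proposition and the sufficiency directions of both cases, together with the concrete examples witnessing a shatterable $9$-point set — has already been carried out. The only mild care needed is to confirm that the displayed example does satisfy the stated axioms, but this is a routine finite check already asserted in \Cref{ex:set1} (respectively \Cref{ex:set2}).
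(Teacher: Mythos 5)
Your proposal is correct and matches the paper's own proof: the upper bound is quoted from the opening proposition of the section, and the lower bound comes from the explicit nine-point configurations of \Cref{ex:set1} (or \Cref{ex:set2}) together with the sufficiency directions \Cref{3linesleft} and \Cref{3linesleft'}. No gaps.
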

\begin{proof}
    We already know that $\VC(\F_3)\leq 9$, so, by the previous results in this section, it suffices to show that there exists a set $P\subseteq\R^2$ of nine points satisfying either of the following sets of axioms.\begin{enumerate}
        \item[(A)] $P$ has four collinear points and satisfies (\ref{cond:0}), (\ref{cond:b}), and (\ref{cond:c}).
        \item[(B)] $P$ has no four points collinear and satisfies (\ref{cond:0}), (\ref{cond:B}), and (\ref{cond:C}).
    \end{enumerate}
    
    Such sets were produced in \Cref{ex:set1,ex:set2}, respectively.
\end{proof}

\section{Counting Shatterable Sets}\label{sec:counting}

As discussed in \Cref{sec:shatter-isomorphism}, determining the exact values of $s_k(X,\mathcal{S})$, for a fixed set-system $(X,\mathcal{S})$, is, in general, a difficult problem. In this section, we prove \Cref{thm:classification-intro}, namely that for the family of lines in $\mathbb{R}^2$, $s_{2} = 2$ (\Cref{iso2}) and $s_3=5$ (\Cref{iso3}). An explicit list of representatives for each isomorphism type can be found in \Cref{app:pictures}. First, to facilitate notation, we specialise the general notation of set-system structures introduced in \Cref{sec:shatter-isomorphism} to this case. 

Let $P=\{p_0,\dots,p_{n-1}\}$ be a finite set of points in $\mathbb{R}^2$. As introduced in \Cref{sec:terminology-notation}, we write $\Scal_{P}^2 = \{l_{a,b}:a,b\in P, a\neq b\}$ for the set of all lines which go through (at least) two points in $P$. Working with the lines in $\mathcal{S}_P^2$ together with, for each $p\in P$, a unique line $l_p$ such that $l_p\cap P = \{p\}$ is precisely working with the quotient $\sim_P$, as described in the beginning of \Cref{sec:shatter-isomorphism}. For the moment, to simplify notation, we may ignore the lines $\{l_p:p\in P\}$, as they play no part in the argument below. 

Define a language $\mathcal{L}_P$ with unary predicates $K_i$, for each line $K_i\in\Scal_P^2$. We can view the tuple $(P;K_0,\dots,K_{m-1})$ as an $\mathcal{L}_P$-structure, by interpreting the relation symbols in the obvious manner.  

\begin{proposition}\label{iso2} 
There are two isomorphism types of sets of size five shattered by $\mathcal{F}_2$. 
\end{proposition}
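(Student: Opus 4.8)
The strategy is to use \Cref{prop:F_2-shattering} to pin down the geometric shape of any shatterable five-point set, and then read off the possible isomorphism types of the $\mathcal{L}_P$-structure $(P;K_0,\dots,K_{m-1})$. By \Cref{prop:F_2-shattering}, a set $P$ of five points is shattered by $\F_2$ if and only if (i) $P$ is covered by two lines and (ii) $P$ contains no four collinear points. Condition (i) forces $\collin(P)\geq 3$, so the first step is to enumerate the ways five points can be distributed across two covering lines $L_1,L_2$ subject to $\collin(P)=3$ (by (ii), we cannot have $\collin(P)\geq 4$, and by (i) together with $|P|=5$ we cannot have $\collin(P)=2$). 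The two candidate distributions are: the \emph{balanced} case, where one covering line carries exactly three $P$-points and the other carries exactly two, and we ask whether the point of intersection $L_1\cap L_2$ lies in $P$ or not.

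First I would argue that, up to relabelling, exactly two \emph{collinearity patterns} arise. Writing the points as three collinear points $\{p_0,p_1,p_2\}$ on $L_1$ and the remaining two points $\{p_3,p_4\}$, condition (ii) prevents $p_3,p_4$ from being collinear with two of the $p_i$. The distinction between the two types is whether $L_1\cap L_2$ is a $P$-point: in one type the two lines meet at one of the three collinear points (so some $p_i$ lies on both $L_1$ and $L_2$, making it collinear with $p_3,p_4$), and in the other type the meeting point of the covering lines is not in $P$ (so $\{p_3,p_4\}$ is disjoint from $L_1$ except possibly through incidental lines in $\mathcal{S}_P^2$). I would verify that in each type there are no \emph{further} forced collinearities beyond $\{p_0,p_1,p_2\}$ and the one line through $\{p_3,p_4\}$, and that condition (ii) is automatically respected, so both patterns are genuinely realised by shatterable sets (indeed both can be drawn explicitly).

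Next I would translate each collinearity pattern into its $\mathcal{L}_P$-structure. The relations $K_i$ record exactly which triples (or pairs) of points are collinear, i.e. which lines of $\mathcal{S}_P^2$ contain which points, so two shatterable five-point sets are shatter-isomorphic precisely when their collinearity patterns agree up to a permutation of $P$. Thus I would count the collinearity patterns: compute, for each of the two candidate configurations, the list of maximal collinear subsets (one $3$-line plus several $2$-lines), and check that the two resulting incidence structures are \emph{not} isomorphic (they differ in whether the $3$-line shares a point with the distinguished $2$-line through $p_3,p_4$, which is an isomorphism invariant of the $\mathcal{L}_P$-structure). This gives the lower bound $s_2\geq 2$; the enumeration in the first two steps gives the matching upper bound $s_2\leq 2$.

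\textbf{Main obstacle.} The delicate step is the careful enumeration ensuring \emph{completeness}: I must be sure that every shatterable configuration falls into one of the two patterns, that no accidental collinearity among the $2$-lines produces a third distinct incidence type or violates (ii), and that the two patterns I isolate are truly non-isomorphic as $\mathcal{L}_P$-structures rather than merely as point sets. Concretely, the work is in showing that the position of the fifth point relative to the $3$-line and the covering configuration is the only invariant, and that the number of $2$-lines (equivalently $|\mathcal{S}_P^2|$) together with the incidence pattern of the unique $3$-line is a complete invariant distinguishing the two cases.
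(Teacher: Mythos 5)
Your proposal is correct and follows essentially the same route as the paper: the paper's dichotomy (a unique set of three collinear points versus exactly two such sets meeting in a $P$-point) is equivalent to your dichotomy on whether the two covering lines intersect inside $P$, and both arguments then read off the isomorphism type from the collinearity pattern recorded by the $\mathcal{L}_P$-structure. The only caution is a phrasing slip in your enumeration (a $3{+}2$ split with the intersection point in $P$ would give only four points; the two genuine configurations are $3{+}2$ with the intersection outside $P$ and $3{+}3$ sharing one $P$-point), but this does not affect the substance.
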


\begin{proof}
Suppose $P\subseteq \mathbb{R}^2$ has $|P|=5$ and $P$ is shattered by $\F_2$. Referring to \Cref{prop:F_2-shattering}, we see that there is a set of three collinear points, $\{a_1,a_2,a_3\}\subseteq P$, lying on a line, $L_1$. There are then two cases to consider.

\textbf{Case I:} Suppose there is a unique set of three collinear points in $P$. Then $\mathcal{L}_P=\{K_1\}$. $(P, L_1)$ is an $\mathcal{L}_P$-structure as above. Suppose that $P'\subseteq \mathbb{R}^2$ has $|P'|=5$, is shattered by $\F_2$ and has a unique set of three points collinear, $\{a_1',a_2',a_3'\}$, lying on a line, $L_1'$. Clearly, $\mathcal{L}_P=\mathcal{L}_{P'}$. Further, a bijection $\phi: P\rightarrow P'$ with $\phi(a_i)=a_i'$ for each $i\in \{1,2,3\}$ is an isomorphism of $\mathcal{L}_P$-structures. 

\textbf{Case II:} There are at least two distinct sets of three collinear points in $P$. Clearly, there are exactly two distinct sets of three collinear points that intersect at exactly one point. Arguing as above, this is enough to determine an isomorphism class.

A set of five points $P$ which is shattered by $\F_2$ must satisfy either Case I or Case II. So, if $P$ and $P'$ satisfy the same case, they are isomorphic. Evidently, if $P$ and $P'$ satisfy different cases, they are not isomorphic (in particular, $\mathcal{L}_P\neq \mathcal{L}_{P'}$). By \Cref{app:pictures}, these isomorphism classes are non-empty. This establishes the result.
\end{proof}

\begin{proposition}\label{iso3} 
There are five isomorphism types of sets of nine points that are shattered by $\F_3$.
\end{proposition}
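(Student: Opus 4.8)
The plan is to mirror the structure of the proof of \Cref{iso2}, but now organised around the two cases (Case A and Case B) from \Cref{sec:vc-3-lines}, which correspond to whether $P$ contains four collinear points. By \Cref{conditions-imply-unique-Li} and \Cref{Conditions-imply-unique-Li}, any shatterable $P$ of size nine has a \emph{unique} covering triple of lines $\{L_1,L_2,L_3\}$, so the collinearity data of $P$ is entirely determined by recording, for each point, which of the covering lines it lies on together with the pattern of cross-lines (equivalently, which triples $\{a_i,b_j,c_k\}$, or in Case A which ordinary $3$-lines, are collinear). The shatter-isomorphism type is exactly the isomorphism type of the $\mathcal{L}_P$-structure $(P;K_0,\dots,K_{m-1})$ whose predicates are the lines of $\mathcal{S}_P^2$, so the whole problem reduces to classifying these incidence patterns up to relabelling.

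First I would dispose of Case A. Here $P$ has a (unique, by \Cref{conditions-imply-no5collin}) $4$-line $L_1$ with $L_1\cap P=\{a_1,\dots,a_4\}$, and the remaining five points split across $L_2,L_3$ with $\collin(P\setminus L_1)=3$. The classification then comes down to enumerating, up to symmetry, the admissible patterns of ordinary $3$-lines forced by (\ref{cond:c}) and constrained by \Cref{cond:d} (each point of $P\setminus L_1$ lies on at most two ordinary $3$-lines) and by \Cref{conditions-imply-no2disjoint4collin} (no second disjoint $4$-line). I would carefully count how many incidence structures on these nine points satisfy (\ref{cond:0}), (\ref{cond:b}), (\ref{cond:c}) up to relabelling fixing the covering triple; I expect this to yield a small, explicit list.

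Next I would treat Case B, where no four points are collinear, so $|L_i\cap P|=3$ for each $i$ and $P=\{a_1,a_2,a_3,b_1,b_2,b_3,c_1,c_2,c_3\}$. By (\ref{cond:B}) every point lies on exactly two cross-lines, so by Claim~1 of \Cref{necessity-B} there are exactly six cross-lines, each meeting each $L_i$ once, i.e. the cross-lines form a collection of transversals $\{a_i,b_j,c_k\}$ realising a partial structure that is, combinatorially, a set of permutations of $\{1,2,3\}$. The condition that each point lies on exactly two cross-lines means each of $a_i,b_j,c_k$ appears in exactly two of the six transversals, and (\ref{cond:C}) rules out the single ``bad'' symmetric pattern, namely the $\mathsf{X}$-configuration. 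I would enumerate the ways to choose six such transversals up to the symmetries permuting the three lines and permuting the three points on each line, discard the $\mathsf{X}$-configuration, and verify each surviving type is realisable (the pictures in the appendix provide realisations).

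Finally I would combine the counts: the types in Case A and Case B are pairwise non-isomorphic because the presence or absence of a $4$-line is an isomorphism invariant (it is detectable from the $\mathcal{L}_P$-structure as a predicate $K_i$ with $|K_i\cap P|=4$), and within each case the enumeration above gives distinct $\mathcal{L}_P$-isomorphism types. The total must come to five. The main obstacle I anticipate is the combinatorial enumeration in Case B: making the symmetry reduction rigorous so that no admissible transversal-pattern is double-counted or overlooked, and cleanly identifying exactly which patterns are excluded by (\ref{cond:C}) versus merely forced to coincide under relabelling. A convenient bookkeeping device is to encode the six cross-lines as two permutations in $S_3$ (sending $a_i$-indices to $b$- and $c$-indices), reducing the count to an orbit count under the relevant symmetry group, with the $\mathsf{X}$-configuration corresponding to the one totally symmetric orbit that violates (\ref{cond:C}).
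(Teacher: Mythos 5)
Your overall strategy coincides with the paper's: split according to whether $P$ contains four collinear points, use the uniqueness of the covering triple (\Cref{conditions-imply-unique-Li}, \Cref{Conditions-imply-unique-Li}) to reduce the shatter-isomorphism type to an incidence pattern of ordinary $3$-lines (Case A) or cross-lines (Case B), and check non-emptiness against \Cref{app:pictures}. The treatment of Case B is essentially complete as sketched: six cross-lines, each point a $2$-node by (\ref{cond:B}), the $\mathsf{X}$-configuration excluded by (\ref{cond:C}), leaving a single type.

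The genuine gap is in Case A, where the proposal stops at ``I would carefully count \dots\ I expect this to yield a small, explicit list'' and then asserts ``the total must come to five.'' The substance of the paper's proof is precisely this count: one must show that the admissible patterns split into exactly four types, distinguished first by whether the $4$-line $l_A$ meets a second $4$-line (the paper's Cases I and II), and then by how the ordinary $3$-lines forced by (\ref{cond:c}) distribute over the two points off $l_A\cup l_B$ subject to \Cref{cond:d} (Subcases I(a) vs.\ I(b), and II(a) vs.\ II(b)). Nothing in your outline commits to, let alone establishes, the number four here, so the final total of five is assumed rather than derived. A second, subtler omission: your reduction claims the $\mathcal{L}_P$-structure is determined by the covering lines together with the ordinary $3$-lines, but this requires verifying in each subcase that no \emph{other} line meets $P$ in three points --- e.g.\ in Subcase I(a) the paper must rule out a line through $b_3,b_4,b_5$ (which is neither a covering line nor ordinary with respect to $l_A$) using the pairing condition (\ref{cond:b}). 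Without that exhaustiveness check, two sets with the same pattern of ordinary $3$-lines could a priori fail to be shatter-isomorphic because of an extra collinearity, so the enumeration would not classify isomorphism types even if carried out.
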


\begin{proof}
Suppose $P$ is a set of nine points shattered by $\F_3$ then we construct an exhaustive set of five cases for $\mathcal{L}_P$ and the associated $\mathcal{L}_P$-structure. We then show that, if $P$ and $P'$ are shattered by $\F_3$, and thus contain four collinear points and satisfy $\{$(\ref{cond:0}), (\ref{cond:b}), (\ref{cond:c})$\}$ or do not contain four collinear points and satisfy $\{$(\ref{cond:0}), (\ref{cond:B}), (\ref{cond:C})$\}$, then if $P$ and $P'$ satisfy the same case, they are isomorphic. We then show that if $P$ and $P'$ satisfy different cases, they are not isomorphic. Finally, we show that there is at least one such $P$ satisfying each case.

\textit{Claim 1.} There are at most four isomorphism classes of sets $P\subseteq \mathbb{R}^2$ that have $|P|=9$, contain four collinear points and satisfy $\{$(\ref{cond:0}), (\ref{cond:b}), (\ref{cond:c})$\}$. These are given by Subcases I(a), I(b), II(a) and II(b).

\textit{Proof of Claim 1.} By assumption $P$ contains four collinear points, say $A:=\{a_1,a_2,a_3,a_4\}$. Let $P\setminus A:= \{b_1,...,b_5\}$ and assume that $\{b_1,b_2,b_3\}$ are collinear. These sets give us lines $l_A$ and $l_B$. We now argue by cases.

\noindent\textbf{Case I:} $|l_B\cap P|=3$. So by condition (\ref{cond:c}), we have ordinary 3-lines $k_i$ corresponding to each $a_i$, that by \Cref{lem:set1-preparatory}(\ref{O3line-intersect-3collin}) must pass through some $b_1,b_2,b_3$ and some $b_4,b_5$. By \Cref{cond:d} each of the $b_j$, for $j\in \{1,2,3,4,5\}$, must be in at most two $k_i$, for $i\in \{1,2,3,4\}$. Therefore, each of $b_4,b_5$ are in exactly two such $k_i$. By \Cref{cond:d} again, there are the following two possibilities, up to renumbering.

\textbf{Subcase I(a):} Each of $b_1,b_2$ are in two $l_i$ are $b_3$ is in none. Suppose without loss of generality that the following is the case: $k_1\cap P= \{a_1,b_1,b_5\}$, $k_2\cap P= \{a_2,b_1,b_4\}$, $k_3\cap P= \{a_3,b_2,b_5\}$, and $k_4\cap P= \{a_4,b_2,b_4\}$.

Now, if $P$ satisfies the assumptions of I(a), the lines passing through at least three points in $P$ are the following: $l_A, l_B, k_1,k_2,k_3,k_4$. Indeed, suppose $l$ is a line passing through at least three points in $P$. Then, if $l\cap \{a_1,a_2,a_3,a_4\}\neq \es$, then $l=l_A$ or $l\in O_{\geq3}(l_A)$. If $l\in O_{\geq3}(l_A)$, then $l\cap \{b_4,b_5\}\neq \es$. So, by \Cref{cond:d}, $l\in \{k_1,k_2,k_3,k_4\}$. If $l\cap \{a_1,a_2,a_3,a_4\}= \es$, then $l=l_B$ or $|l\cap \{b_1,b_2,b_3\}|=1$. If $|l\cap \{b_1,b_2,b_3\}|=1$, then $\{b_4,b_5\}\subseteq l$. So, since $l\cap \{a_1,a_2,a_3,a_4\}= \es$, $l\cap \{b_1,b_2,b_3\}=\{b_3\}$. But, in this case, $\{b_4,b_5,b_1,b_2\}$ is not a pairing four, which contradicts (\ref{cond:b}). 

So, $\mathcal{L}_P=\{K_1,...,K_6\}$ and $(P,l_A, l_B, k_1,k_2,k_3,k_4)$ is an $\mathcal{L}_P$-structure as in the definition. Suppose $P'\subseteq \mathbb{R}^2$ has $|P|=9$, is shattered by $\F_3$, and satisfies the assumptions of I(a). Then we may label elements of $P'$ analogously: $P'=\{a_1',a_2',a_3',a_4',b_1',b_2',b_3',b_4',b_5'\}$. The lines passing through at least three points may be labelled analogously too: $l_A', l_B', k_1',k_2',k_3',k_4'$. So, $\mathcal{L}(P)=\mathcal{L}(P')$. Then, the map $\phi: P\rightarrow P'$ defined by $\phi(a_i)=a_i'$ for each $i\in \{1,2,3,4\}$ and $\phi(b_j)=b_j'$ for each $j\in \{1,2,3,4,5\}$ is an isomorphism of $\mathcal{L}_P$-structures, since the labelling determines exactly which lines each point is on.

\textbf{Subcase I(b):} Each of $b_1,b_3$ are in one $k_i$ and $b_2$ is in two. Suppose without loss of generality that the following is the case: $k_1\cap P= \{a_1,b_1,b_4\}$, $k_2\cap P= \{a_2,b_2,b_5\}$, $k_3\cap P= \{a_3,b_2,b_4\}$, and $k_4\cap P= \{a_4,b_3,b_5\}$.

Now, if $P$ satisfies the assumptions of I(b), the lines passing through at least three points in $P$ are the following: $l_A, l_B, k_1,k_2,k_3,k_4$. The proof proceeds exactly as above, except that the case $\{b_4,b_5\}\subseteq l$ and $l\cap \{a_1,a_2,a_3,a_4\}= \es$ is ruled out immediately, since any $l$ with $\{b_4,b_5\}\subseteq l$ is such that $l\in O_{\geq3}(l_A)$ in this case. Clearly, we may conclude that this subcase determines an isomorphism class exactly as in the previous subcase. 

\noindent\textbf{Case II:} $|l_B\cap P|=4$. In this case, by \Cref{basic-fact}(\labelcref{no-2-n+1-disjoint}), we may assume that $l_B\cap P= \{a_1,b_1,b_2,b_3\}$.  Then by condition (\ref{cond:c}) we must have $k_i\in O_{\geq3}(l_A)$ corresponding to $a_i$ for each $i\in \{2,3,4\}$. Each of these $l_i$ must pass through at least one of $b_4,b_5$, so we have the following cases, that, by (\ref{cond:d}), are all we need to consider up to renumbering.

\textbf{Subcase II(a):} $b_4$ is in two $k_i$ and $b_5$ is in one. In particular, $|k_i\cap \{b_4,b_5\}|=1$ for each $i\in \{2,3,4\}$. We may suppose without loss of generality that $b_4\in k_2\cap k_3$ and $b_5\in k_4$. We may assume without loss of generality that $k_2\cap P= \{a_2,b_4,b_1\}$ and $k_3\cap P= \{a_3, b_4,b_2\}$. By \Cref{cond:d}, $b_1,b_2\notin k_4$. So, $k_4\cap P= \{a_4,b_5,b_3\}$. 

Now, if $P$ satisfies the assumptions of II(a), the lines passing through at least three points in $P$ are the following: $l_A, l_B,k_2,k_3,k_4$. Indeed, suppose $l$ is a line passing through at least three points in $P$. If $l\cap \{a_1,a_2,a_3,a_4\}\neq \es$, then $l=l_A$ or $l\in O_{\geq3}(l_A)$. If $l\in O_{\geq3}(l_A)$, then $l\cap \{b_1,b_2,b_3,b_4\}\neq \es\}$. Since all of these points are in two of $l_B,k_2,k_3,k_4$, by \Cref{cond:d}, $l\in \{l_B,k_2,k_3,k_4\}$. If $l\cap \{a_1,a_2,a_3,a_4\}= \es$, then $l\cap \{a_1,b_2,b_3,b_4\} \neq \es$. So, we may carry out the same reasoning again.

As in previous cases, this labelling determines an isomorphism between structures satisfying the assumptions of II(a). 

\textbf{Subcase II(b):} Both $b_4$ and $b_5$ are in two $k_i$. In particular, $|k_j\cap \{b_4,b_5\}|=2$ for some $j\in \{2,3,4\}$. Further, $|k_i\cap \{b_4,b_5\}|=1$ for each $i\in \{2,3,4\}\setminus \{j\}$. We may suppose without loss of generality that $b_4\in k_2\cap k_3$ and $b_5\in k_2\cap k_4$. We may assume without loss of generality that $k_2\cap P\supseteq \{a_2,b_4,b_5\}$, $k_3\cap P= \{a_3,b_4,b_1\}$ and $k_4\cap P= \{a_4,b_5,b_2\}$. By \Cref{O3line-intersect-3collin}, $k_2\notin O_3(l_A)$ since in that case $(k_2\cap \{b_1,b_2,b_3\})\cap P =\es$. So, $k_2\in O_4(l_A)$. Therefore, $k_2\cap \{b_1,b_2,b_3\}\neq \es$. By \Cref{cond:d}, $b_3\in k_2$, so $k_2\cap P= \{a_2,b_4,b_5,b_3\}$. 

By an almost identical argument as the one for Subcase II(a), the lines passing through at least three points in $P$ are: $l_A, l_B,k_2,k_3,k_4$.

As in previous cases, this labelling determines an isomorphism between structures satisfying this subcase.
\hfill$\blacksquare_{\textit{Claim }1}$

\textit{Claim 2.} There is at most one isomorphism class of sets $P\subseteq \mathbb{R}^2$ that have $|P|=9$ do not contain four collinear points and satisfy $\{$(\ref{cond:0}), (\ref{cond:B}), (\ref{cond:C})$\}$. This is given by Case III.

\textit{Proof of Claim 2.} We now place ourselves in the following case.

\textbf{Case III:} $P$ does not contain four collinear points and satisfies $\{$(\ref{cond:0}), (\ref{cond:B}), (\ref{cond:C})$\}$. Then we may choose $L_1,L_2,L_3$ with $L_1\cap P = \{a_1,a_2,a_3\}$, $L_2\cap P = \{b_1,b_2,b_3\}$ and $L_3\cap P = \{c_1,c_2,c_3\}$ (these sets pairwise disjoint). We may fix exactly six distinct cross-lines: $l_{a_1}, l_{a_1}', l_{a_2}, l_{a_2}', l_{a_3}, l_{a_3}'$ such that $l_{a_i}, l_{a_i}'$ are $a_i$-cross-lines for each $i\in \{1,2,3\}$. Suppose without loss of generality that $l_{a_1}\cap P= \{a_1,b_1,c_1\}$ and $l_{a_1}'\cap P= \{a_1,b_2,c_2\}$. We may also suppose without loss of generality that $a_1$ witness the $y\in P$ from (\ref{cond:C}). Without loss of generality, therefore, we may assume that $l_{a_2}\cap P= \{a_2,b_2,c_1\}$. Then we follow the same reasoning as in the proof of \Cref{necessity-B}. Since all fours are pairing in $P$, we see that $l_{a_2}'\cap P =\{a_2,b_3,c_3\}$. From here, again since all fours are pairing, $l_{a_3}\cap P =\{a_3,b_3,c_2\}$ and $l_{a_3}'\cap P =\{a_3,b_1,c_3\}$.

Clearly, since every element of $P$ is a 2-node, the lines passing through at least three points in $P$ are exactly: $L_1, L_2, L_3, l_{a_1}, l_{a_1}', l_{a_2}, l_{a_2}', l_{a_3}, l_{a_3}'$. 

As in previous cases, this labelling determines an isomorphism between structures satisfying this subcase.\hfill$\blacksquare_{\textit{Claim }2}$

\textit{Claim 3.} If $A, A'\subseteq \mathbb{R}^2$ are such that $|A|=|A'|=9$, both are shattered by $\F_3$, and each satisfies different cases from I(a), I(b), II(a), II(b) and III, then $A$ and $A'$ are not isomorphic.

\textit{Proof of Claim 3.} If $P, P'\subseteq \mathbb{R}^2$ are such that $|P|=|P'|=9$, both are shattered by $\F_3$, and each satisfy different cases from I(a), I(b), II(a), II(b) and III, then $P$ and $P'$ are not isomorphic.

\textit{Proof of Claim 3.} This is clear by inspecting the conditions.  \hfill$\blacksquare_{\textit{Claim }3}$

Since each of these five isomorphism classes are non-empty by \Cref{app:pictures}, it follows that there are exactly five isomorphism classes, as required.
\end{proof}

\section{Higher-Dimension Analogues of \texorpdfstring{\Cref{thm:vc-intro}}{Theorem A}}\label{sec:higher-arity}

As discussed in the Introduction, we conclude the main body of this paper by showing that \Cref{thm:vc-intro} can easily be transferred to higher dimensions. First, we introduce some notation.

\textbf{Notation.} Let $n\in\Nbb_{>0}$ and $m\in\Nbb$, where $m\leq n$. We write $\Abb_{n,m}$ for the set of $m$-dimensional affine subsets of $\Rbb^n$.
\[
    \Abb_{n,m}:= \{V\sub \R^n: V\text{ is affine, }\dim(V)=m\}.
\]

In this notation, the set of all points in $\Rbb^2$ is just $\Abb_{2,0}$ and the set $L$ of all lines in $\Rbb^2$ just $\Abb_{2,1}$. In particular, the family $\F_k$ that we have considered in the previous sections of this paper (for $k=2,3$) is precisely $\Abb_{2,1}^{\cup k}$.

The main result in this section is the following.

\begin{theorem}\label{thm:linear-alg}
    Let $n\in\Nbb_{\geq 2}$ and $k\in\Nbb$. 
    \[
        \VC\left(\Abb_{n,n-2},\Abb_{n,n-1}^{\cup k}\right)= \VC\left(\F_k\right).
    \]
\end{theorem}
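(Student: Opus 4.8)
The plan is to prove $\VC\left(\Abb_{n,n-2},\Abb_{n,n-1}^{\cup k}\right)= \VC(\F_k)$ by exhibiting a shatter-isomorphism (in the sense of \Cref{def:shatter-isomorphic}) between the relevant set systems and invoking \Cref{lem:k-fold-unions}. The core observation I would exploit is that both set systems are governed purely by the \emph{unique line property}, which for $\Abb_{n,n-2}$ and $\Abb_{n,n-1}$ takes the following form: given two distinct $(n-2)$-dimensional affine subspaces $V_1, V_2$ that are contained in a common hyperplane, there is a unique hyperplane $H\in\Abb_{n,n-1}$ with $V_1, V_2\subseteq H$. This is the exact analogue of ``two distinct points determine a unique line'', with $(n-2)$-subspaces playing the role of points and hyperplanes playing the role of lines.

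First I would set up the correspondence carefully. Fix a hyperplane $\Pi\cong\R^{n-1}$ in $\R^n$ and, inside $\Pi$, fix an affine $2$-plane $\pi\cong\R^2$. Every point $p\in\pi$ extends (within $\Pi$, say) to a unique $(n-2)$-dimensional affine subspace $V_p$ of $\R^n$ in a chosen transversal family, and every line $\ell\subseteq\pi$ extends to a unique hyperplane $H_\ell$; the construction should be arranged so that $p\in\ell$ in $\pi$ if and only if $V_p\subseteq H_\ell$ in $\R^n$. This gives an injection from the points and lines of $\R^2$ into $\Abb_{n,n-2}$ and $\Abb_{n,n-1}$ respectively that \emph{preserves and reflects the incidence relation}. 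The key point to verify is that the induced map on set-system structures is a shatter-isomorphism: a finite configuration $P\subseteq\R^2$ and its image $\{V_p : p\in P\}$ have, up to relabelling of the set-system language, the same point-line (equivalently, subspace-hyperplane) incidence pattern, and hence isomorphic $\CL$-structures. By \Cref{lem:k-fold-unions}, $P$ is shattered by $\F_k = \Abb_{2,1}^{\cup k}$ if and only if its image is shattered by $\Abb_{n,n-1}^{\cup k}$, which immediately yields $\VC\left(\Abb_{n,n-2},\Abb_{n,n-1}^{\cup k}\right)\geq \VC(\F_k)$.

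For the reverse inequality I would argue that every shatterable configuration in $\Abb_{n,n-2}$ essentially arises this way, so that no ``extra'' shattering is available in higher dimension. The cleanest route is to show that if a finite family $\CV\subseteq\Abb_{n,n-2}$ is shattered by $\Abb_{n,n-1}^{\cup k}$, then the relevant incidence data of $\CV$ (which $(n-2)$-subspaces are jointly contained in a common hyperplane) obeys the same axioms as a planar point-line configuration, so $\CV$ is shatter-isomorphic to a set of points in $\R^2$. Concretely, the only combinatorial feature used by the $\F_k$-shattering arguments in Sections \ref{sec:vc-2-lines} and \ref{sec:vc-3-lines} is the unique line property together with the notion of collinearity, and both transfer verbatim to the hyperplane-containment relation on $(n-2)$-subspaces. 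Thus any $\CV$ shattered by $\Abb_{n,n-1}^{\cup k}$ has $|\CV|\leq \VC(\F_k)$, giving $\VC\left(\Abb_{n,n-2},\Abb_{n,n-1}^{\cup k}\right)\leq \VC(\F_k)$ and hence equality.

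The main obstacle will be making the reverse direction fully rigorous, specifically verifying that the subspace-hyperplane incidence geometry of $\Abb_{n,n-2}$ in $\R^n$ does not admit configurations whose incidence structure is strictly richer than what is realisable by points and lines in $\R^2$. In the plane, three points are collinear precisely when they share a line; I must confirm that the analogous statement for $(n-2)$-subspaces --- that three of them lie in a common hyperplane iff they are pairwise co-hyperplanar in a consistent way --- behaves identically, with no higher-dimensional degeneracies (for instance, ensuring that ``many subspaces in a common hyperplane'' is the only source of the incidences the shattering arguments care about). Provided this incidence dictionary is faithful, the abstract machinery of shatter-isomorphisms reduces the entire higher-dimensional statement to the two-dimensional results already established, and no new combinatorial work is needed.
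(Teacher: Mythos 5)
Your forward direction (embedding a planar configuration into $\Abb_{n,n-2}$ so that incidences are both preserved and reflected) is sound, and it is exactly the direction the paper dismisses as trivial. The gap is in the reverse inequality. You propose to show that every finite $\CV\sub\Abb_{n,n-2}$ shattered by $\Abb_{n,n-1}^{\cup k}$ is shatter-isomorphic to a set of points in $\R^2$, but you give no mechanism for producing such a planar configuration, and you yourself flag this as ``the main obstacle'' without resolving it. The difficulty is real: the incidence geometry of $(n-2)$-subspaces and hyperplanes satisfies only the \emph{uniqueness} half of the unique line property. Two distinct $(n-2)$-dimensional affine subspaces need not lie in any common hyperplane at all (already in $\R^3$, two skew lines lie in no common plane), so the set-system structure of a general $\CV$ realises strictly fewer traces than any planar point set of the same size and cannot be shatter-isomorphic to one in the sense of \Cref{def:shatter-isomorphic}. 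Showing that this phenomenon is absent from, or harmless for, \emph{shattered} families is precisely the content of the hard direction, and ``the incidence dictionary is faithful'' is an assertion, not an argument. Your fallback --- that the planar proofs in \Cref{sec:vc-2-lines,sec:vc-3-lines} use only uniqueness of joins and hence transfer verbatim --- would at best handle $k\leq 3$; \Cref{thm:linear-alg} is stated for all $k$, where $\VC(\F_k)$ is not even known, so no route that re-runs the planar arguments can work in general.

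The missing idea is the paper's generic hyperplane section. Given a finite $A\sub\Abb_{n,n-2}$, one fixes an $(n-1)$-dimensional direction $U$ containing no direction space of a member of $A$ and shows that for all but finitely many translates $U'$ of $U$, the map $V\mapsto V\cap U'$ sends $A$ injectively to a family of $(n-3)$-dimensional affine subspaces of $U'\cong\R^{n-1}$ in such a way that the traces cut by hyperplanes are preserved (in particular, any hyperplane containing at least two members of $A$ induces the same trace after sectioning). This transports shattering by $\Abb_{n,n-1}^{\cup k}$ to shattering by $\Abb_{n-1,n-2}^{\cup k}$, and iterating $n-2$ times lands in the planar case, yielding the upper bound for every $k$ simultaneously via \Cref{lem:k-fold-unions}. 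Without this, or some equally concrete reduction, your proposal does not close.
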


Before proving the theorem above, a rather important clarification is needed. Indeed, the  reader may have noticed that $(\Abb_{n,n-2},\Abb_{n,n-1})$, and in general $(\Abb_{n,n-2},\Abb_{n,n-1}^{\cup k})$, is not formally a set system, in the sense defined in \Cref{sec:intro}. This is because $\Abb_{n,n-1}$ is not a subset of $\pow(\Abb_{n,n-2})$. However, this is only a technical matter, which can be easily (and intuitively) remedied by identifying $\Abb_{n,n-1}$ with the subset of $\pow(\Abb_{n,n-2})$ which consists only of sets of $(n-2)$-dimensional affine subspaces of $\Rbb^n$ whose union is an $(n-1)$-dimensional affine subspace of $\Rbb^n$.


\begin{proof}
    Recall that for a set system $(X,\Scal)$ and $Y\subseteq X$ we define an equivalence relation $\sim_Y$ on $\Scal$.
    \[S\sim_A S' \text{ if and only if } S\cap A = S'\cap A,\]
    for each $S,S'\in\Scal$. In light of \Cref{lem:k-fold-unions}, to prove \Cref{thm:linear-alg} it suffices to show the following claims.
    \begin{enumerate}
        \item For all finite $A\subseteq \Abb_{n,{n-2}}$ there is a finite $B\subseteq P$ such that $(A,\Abb_{n,n-1}/_{\sim_A)}$ and $(B,L/_{\sim_B})$ are shatter isomorphic.
        \item For every finite $B\subseteq P$ there is a finite $A\subseteq\Abb_{n,n-2}$ such that $(A,\Abb_{n,n-1}/_{\sim_A})$ and $(B,L/_{\sim_B})$ are shatter isomorphic.
    \end{enumerate}
    Clearly, the second point is trivial, so we may only focus on the first. 

    Let $A\subseteq \Abb_{n,n-2}$ be a finite set of $(n-2)$-dimensional affine subsets of $\Rbb^n$. By definition, $A$ is of the form $\{S_1+p_1,\dots,S_m+p_m\}$, where $S_i\subseteq\Rbb^n$ is an $(n-2)$-dimensional subspace and $p_i\in\Rbb^n$, for $i\leq m$. Let $U\subseteq\Rbb^n$ be an $(n-1)$-dimensional subspace such that $S_m\not\subseteq U$ (the existence of such a $U$ follows from an easy linear algebra argument). It is easy to see that $S_i+p_i\not\subseteq U$, for each $i\leq m$. On the other hand, we must have that $(S_i+p_i)\cap U\neq \emptyset$, for each $i\leq m$. 
    
    \emph{Claim.}
        There is a translation $U'=U+q$ of $U$ such that the following conditions hold.
        \begin{enumerate}
            \item For each $(S+p)\in A$ we have that $\es \subsetneq (S+p)\cap U' \subsetneq U'$.
            \item If $S+p, S'+p'\in A$ are distinct, then $(S+p)\cap U' \neq (S'+p') \cap U'$. 
            \item For any $V\in \Abb_{n,n-1}$, if $|\{S+p\in A: (S+p)\sub V\}| \geq 2$, then:
            \[
                \{S+p\in A: (S+p)\cap U' \sub V\} = \{S+p\in A: (S+p)\sub V\}.
            \]
        \end{enumerate}

Given the claim above, let $B_{n-1}\subseteq\Abb_{n-1,n-3}$ be the set $B_{n-1} = \{(a+S)\cap U'\}$. It is straightforward to check that the conditions in the claim ensure that we have a shatter-isomorphism between $(A,\Abb_{n,n-1}/_{\sim_A})$ and $(B_{n-1},\Abb_{n-1,n-2}/_{\sim_{B_{n-1}}})$. Repeating the same argument $n-2$ times, we get the result. Thus, it suffices to prove the claim. 

\emph{Proof of Claim.}
    First, we observe that if $U'$ is any translation of $U$, then, for any $S+p\in A$ we have that $S+p\not\subseteq U'$ but $(S+p)\cap U'\neq \emptyset$. This means that condition (1) in the claim will automatically hold for any translation of $U$. Thus, it suffices to show that conditions (2) and (3) hold in some translation of $U$. We will, in fact, show that they each hold in all but finitely many translations of $U$. For (2), suppose that for some $q\in\Rbb^n$ and distinct $S+p,S'+p'\in A$ we have that $(S+p)\cap (U+q) = (S'+p')\cap (U+q)$. A direct dimension calculation shows that this $q$ must be unique. Thus, each pair of distinct elements of $A$ determines at most one translate of $U$ which does not satisfy (2). Since $A$ is finite, it follows that (2) is satisfied by all but finitely many translates of $U$. Similarly, we may show that at most finitely many translates of $U$ do not satisfy (3). Thus, the claim follows. \phantom\qedhere
    {\hfill$\blacksquare_{\textit{Claim}}$ }\qedsymbol

\end{proof}

\appendix
\section{The proof of \texorpdfstring{\Cref{lem:set1-preparatory}}{Proposition}}\label{pf:set1-preparatory}

\prelim*

\begin{proof}[Proof of (\ref{O3line-intersect-3collin})]
Let $l\cap P = \{a_1,a_2,a_3,a_4\}$, $P\setminus l = \{b_1,b_2,b_3,b_4,b_5\}$. Take a set of three collinear points in $P\setminus l$: without loss of generality, let this set be $\{b_1,b_2,b_3\}$. Suppose, without loss of generality that $a_1\in k$. For contradiction, suppose that $K\cap \{b_1,b_2,b_3\}=\es$. So, since $|k\cap P|=3$, $K\cap P = \{a_1,b_4,b_5\}$.

\textit{Claim 1.} There is some $i \in \{2,3,4\}$ and $l_i \in O_4(l)$ such that $l_i \cap l = \{a_i\}$.

\textit{Proof of claim 1.} By (\ref{cond:c}), there are $l_i \in O_{\geq 3}(l)$ such that $a_i \in l_i$ for each $i\in \{2,3,4\}$. By \Cref{cond:d}, there are at most two distinct elements of $O_{\geq 3}(l)$ containing $b_4$ (and similarly $b_5$). So, since $\{b_4,b_5\}\subseteq k$, there is $i \in \{2,3,4\}$ such that $\{b_4,b_5\}\cap l_i =\es$. But then $|l_i\cap \{b_1,b_2,b_3\}|\geq 2$. Since $b_1,b_2,b_3$ are collinear, $\{b_1,b_2,b_3\}\subseteq l_i$. This establishes the claim. 
\hfill$\blacksquare_{\textit{Claim }1}$

So, assume, without loss of generality, that $l_4\in O_4(l)$. Applying (\ref{cond:c}), this time with respect to $l_4$, there are $l_j' \in O_{\geq3}(l_i)$ such that $b_j \in l_j'$ for each $j\in \{1,2,3\}$. Note that since there are no five points collinear in $P$, $l_j' \neq l$ for each $j \in \{1,2,3\}$. So, $|l_j'\cap \{a_1,a_2,a_3,a_4\}| \leq 1$. So, $l_j'\cap \{b_4,b_5\} \neq \es$ for each $j$.

Take $j\in \{1,2,3\}$. If $|l_j'\cap \{b_4,b_5\}|=1$, then $|l_j'\cap \{a_1,a_2,a_3,a_4\}|=1$, so, $l_j'\in O_{\geq3}(l)$. Since $\{b_4,b_5\}\sub k\in O_{\geq3}(l)$, each of $b_4,b_5$ lies on at most one such $l_j'$, using the same reasoning as above (otherwise, one of $b_4,b_5$ would lie on three distinct elements of $O_{\geq3}(l)$, contradicting \Cref{cond:d}. So, there are at most two such $l_j'$.

Thus, there is some $j$ such that $\{b_4,b_5\}\subseteq l_j'$. So, $b_j\in l_j'=K$. But this means that $K\notin O_3(l)$. This is a contradiction.
\end{proof}

\begin{proof}[Proof of (\ref{nobadquadruple})]
Suppose otherwise: for $A=\{a_1,...,a_4\}$ lying on a line $l$, $B=\{b_1,b_2,b_3\}$ and $P\setminus (A\cup B)=\{b_4,b_5\}$ assume $(a_1,a_2,b_1,b_2)$ is a bad quadruple. 

Then each of $l_{a_1b_1}, l_{a_1b_2}, l_{a_2b_1},l_{a_2b_2}$ are in $O_{\geq3}(l)$ and intersect $P\setminus (A\cup B)$ non-trivially. 

\textit{Claim 1.} These lines are pairwise distinct. 

\textit{Proof of Claim 1.} If $l_{a_1b}=l_{a_2b'}$ for any $b,b'\in \{b_1,b_2\}$, then $\{a_1,a_2,a_3,a_4,b\}$ is a set of five collinear points. So, $l_{a_1b}\neq l_{a_2b'}$ for any $b,b'\in \{b_1,b_2\}$. 

Further, if for some $a\in \{a_1,a_2\}$ (without loss of generality choosing $a=a_1$) we have that $l_{a_1b_1}=l_{a_1b_2}$, then we must have that (choosing $b_4\in \{b_4,b_5\}$ without loss of generality) $\{a_1,b_1,b_2,b_4\}$ is a set of collinear points (since each line intersects $P\setminus (A\cup B)$). So, since there are no five points in $P$ collinear, $b_4\notin l_{a_2b_1}\cup l_{a_2b_2}$. But then $b_5\in l_{a_2b_1}\cap l_{a_2b_2}$, so $l_{a_2b_1}=l_{a_2b_2}$. So, $\{b_1,b_2\} \sub l_{a_2b_1}$ and $b_4\in l_{a_2b_1}$ after all, which is a contradiction.\hfill$\blacksquare_{\textit{Claim }1}$

It follows that $b_1,b_2$ and some $b\in \{b_4,b_5\}$ are each contained in at least two lines that are elements of $O_{\geq3}(l)$. Without loss of generality, suppose $b=b_4$. By condition (\ref{cond:c}), there are lines $l_3,l_4\in O_{\geq3}(l)$ passing through $a_3,a_4$ respectively. By \Cref{cond:d}, $(l_3\cup l_4)\cap \{b_1,b_2,b_4\}=\es$, so that $\{b_3,b_5\}\subseteq l_3\cap l_4$. This implies that $l_3=l_4$. In this case, there would be a set of five collinear points, which is a contradiction.    
\end{proof}

\begin{proof}[Proof of (\ref{set1-4collin})]
    Follows immediately from (\ref{cond:0}) and \Cref{prop:F_2-shattering}.
\end{proof}

\begin{proof}[Proof of (\ref{no3collin-leq6})]
We go by cases.

\noindent\textbf{Case I:} Suppose that there are two distinct sets of four collinear points in $P$ : $\{a_1,...,a_4\}$ and $\{a_1,b_1,b_2,b_3\}$ (note that they must intersect by \Cref{basic-fact}). Then we consider two subcases.

\textbf{Subcase I(a):} $a_1\in A$, so we may without loss of generality suppose that $a_1,a_2,b_1\in A$, and that $a_3,a_4,b_2,b_3\notin A$ (since no three points in $A$ are collinear). We may then further assume that the remaining points in $A$, $b_4,b_5$, are in $A$, since $|A|\geq 5$. Now, we take the following lines to isolate $A$: $l_{a_2b_1}, l_{a_1b_4}$ and a line intersecting only $b_5$. These lines do not intersect $\{a_3,a_4,b_2,b_3\}$ since there are no five points in $P$ collinear.

\textbf{Subcase I(b):} Suppose $a_1\notin A$. We know that $|A\cap \{a_1,...,a_4,b_1,b_2,b_3\}|\in \{3,4\}$ since no three points in $A$ are collinear and $|A|\geq 5$.

So, \textit{first assume that $|A\cap \{a_1,...,a_4,b_1,b_2,b_3\}|= 4$}. Without loss of generality, assume that $a_3,a_4,b_2,b_3 \in A$ and that $a_1,a_2,b_1\notin A$. 

If $b_4,b_5\in A$, then we must have that three points in $A$ are collinear. We see this because by condition (\ref{cond:c}) there are lines $l_3,l_4\in O_{\geq3}(l)$, where $l$ is the line passing through $a_1,a_2,a_3,a_4$, with $a_i\in l_i$ for each $i\in \{3,4\}$. Both $l_3,l_4$ must intersect $\{b_1,b_2,b_3\}$ by (\ref{O3line-intersect-3collin}). By \Cref{cond:d}, at most one of these two lines passes through $b_1$. So, we may assume without loss of generality that $l_4$ does not pass through $b_1$. Therefore, $|l_4\cap \{b_4,b_5,b_2,b_3\}|\geq 2$ and $a_4\in l_4$, so $|l_4\cap A|\geq 3$. 

We may therefore suppose without loss of generality that $b_5\notin A$. Since $|A|\geq 5$, we may suppose that $b_4\in A$. But then either $b_5\notin l_{a_3b_2}\cup l_{a_4b_3}$ or $b_5\notin l_{a_3b_3}\cup l_{a_4b_2}$ (indeed, suppose that $b_5\in l_{a_i,b_j}$, then if $i'=i$ or $j'=j$, $b_5\notin l_{a_{i'}b_{j'}}$) since no five points in $P$ are collinear). So, we may isolate $A$ using one of these pairs of lines and a line intersecting only $b_4$. 

It remains to consider the case that $|A\cap \{a_1,...,a_4,b_1,b_2,b_3\}|=3$. We may assume without loss of generality that $a_3,a_4,b_3,b_4,b_5 \in A$ and that $a_1,a_2,b_1,b_2\notin A$. 

Now given (\ref{O3line-intersect-3collin}), we see that whenever $k\in O_3(l)$, $k\in O_3(l')$ where $l'$ is the line passing through $a_1,b_1,b_2,b_3$. This implies that the following is true.
\begin{itemize}
    \item[\namedlabel{star}{($\star$)}] Whenever $k\in O_{\geq3}(l')$, $k=l$ or $k\in O_{\geq3}(l)$ (and vice versa reversing $l$ and $l'$).
\end{itemize}

Now for $i\in \{3,4\}$ and $j\in \{4,5\}$, if $a_i$ does not pair with $b_j$ inside $A$, then $a_i,b_j$ must lie on a line that is in $O_{\geq3}(l)$. Hence, by observation \ref{star}, $a_i,b_j$ must lie on a line that is in $O_{\geq3}(l')$. So, by \Cref{cond:d}, since $l\in O_4(l')$, for each $i\in \{3,4\}$ there is $j_i\in \{4,5\}$ such that $a_i$ pairs with $b_{j_i}$ in $A$. If we may pick $j_1\neq j_2$, we may isolate $A$ using $l_{a_1b_{j_1}}, l_{a_2b_{j_2}}$ and a line intersecting only $b_3$. 

So suppose that we cannot. Then we may assume without loss of generality that $b_5$ does not pair inside $A$ with either $a_3$ or $a_4$ (if both $b_4,b_5$ do pair inside $A$ with one of $a_3, a_4$, then we may pick $j_1, j_2$ distinct). Note that in this case $a_3,a_4$ both pair with $b_4$ inside $A$. Now, $b_5$ is on two distinct lines $l_3,l_4\in O_{\geq3}(l)$ where $a_3\in l_3$ and $a_4 \in l_4$, so by \Cref{cond:d} and \ref{star}, we must have either that $b_5$ pairs with $b_3$ in $A$ or $b_3\in l_3\cup l_4$. But in the latter case there are three collinear points in $A$, so we have that $b_5$ pairs with $b_3$ inside $A$. Therefore, we may isolate $A$ using $l_{b_3b_5}, l_{a_3b_4}$ and a line intersecting only $a_4$.  

\noindent\textbf{Case II:} Suppose that $|A|=6$ and there is a unique set of four collinear points in $P$ , $\{a_1,...,a_4\}$. Denote the remainder by $\{b_1,...b_5\}$, assuming as before that $b_1,b_2,b_3$ are collinear and that neither of $b_4,b_5$ are collinear with these three. Then since no three points in $A$ are collinear, we may suppose without loss of generality that $A=\{a_1,a_2,b_1,b_2,b_4,b_5\}$. By condition (\ref{cond:b}) we have that some $b_i,b_j$ pair inside $A$ for $i,j\in \{1,2,4,5\}$. Since we may not have that $i,j\in \{1,2\}$, we need consider only the following subcases.

\textbf{Subcase II(a):} Suppose $b_4,b_5$ pair inside $A$. Then we note that if $m,n\in \{1,2\}$, $a_m,b_n$ pair inside $A$ since there are no five points collinear and a unique set of four points collinear. This will hold in general as long as $b_4,b_5\in A$ and future use of this fact in the proof will be referred to by \namedlabel{starstar}{$(\star\star)$}. So, $a_1,b_1$ and $a_2,b_2$ pair inside $A$ and we may isolate $A$ using $l_{a_1b_1},l_{a_2b_2},l_{b_4b_5}$. 

\textbf{Subcase II(b):} Suppose $b_4,b_5$ do not pair inside $A$, then without loss of generality we may suppose that $b_4,b_1$ pair inside $A$. Then note that $b_5$ may fail to pair with $a_1$ (respectively, $a_2$) in $A$ only if $b_5,b_3,a_1$ (respectively, $b_5,b_3,a_2$) collinear. So, since there are no five points collinear in $A$, $b_5$ must pair with either $a_1$ or $a_2$. So, without loss of generality we may assume that $a_1,b_5$ pair inside $A$. By \ref{starstar}, $a_2,b_2$ pair inside $A$, so may isolate $A$ using $l_{a_1b_5},l_{a_2b_2},l_{b_1b_4}$.

\noindent\textbf{Case III:} Then if $|A|=5$, keeping the notation from Case II and still with a unique set of four collinear points, we have the following subcases.

\textbf{Subcase III(a):} Suppose $|\{b_4,b_5\}\cap A|=1$, so without loss of generality we have that $A=\{a_1,a_2,b_1,b_2,b_4\}$. Now if $a_1,b_1$ and $a_2,b_2$ each pair inside $A$, we are done, so we may suppose otherwise. Without loss of generality, we may suppose that $a_1,b_1$ do not pair inside $A$. So, since there is a unique set of four points collinear, we may suppose that $b_5\in l_{a_1b_1}$. But then, again since there is a unique set of four points collinear, $b_5\notin l_{a_1b_2}$ and $b_5\notin l_{a_2b_1}$. So, we may isolate $A$ in this case too. 

\textbf{Subcase III(b):} Suppose $|\{b_1,b_2,b_3\}\cap A|=1$, so we may assume without loss of generality that $A=\{a_1,a_2,b_1,b_4,b_5\}$. By (\ref{nobadquadruple}) $\{a_1,a_2,b_4,b_5\}$ is not a bad quadruple with respect to $\{a_1,a_2,a_3,a_4\}$ and $\{b_1,b_4,b_5\}$. So, we may assume that (without loss of generality) $a_1,b_4$ pair inside $A$. Further, $b_1,a_2$ must pair inside $A$ by \ref{starstar}, so we may isolate $A$ using $l_{a_1b_4},l_{a_2b_1}$ and a line intersecting only $b_5$.

\textbf{Subcase III(c):} Suppose $|\{a_1,a_2,a_3,a_4\}\cap A|=1$, so that without loss of generality we may assume that $A=\{a_1,b_1,b_2,b_4,b_5\}$. By condition (\ref{cond:b}), we may assume that some $b_i,b_j$ for distinct $i,j\in \{1,2,4,5\}$ pairs inside $A$. But since it can't be that $i,j\in \{1,2\}$ we may without loss of generality suppose that $1\neq i,j$. But, by \ref{starstar}, $b_1,a_1$ pair inside $A$. Thus, we may isolate $A$ using $l_{a_1b_1},l_{b_ib_j}$ and a line through the remaining point.

These cases are evidently exhaustive.
\end{proof}

\begin{proof}[Proof of (\ref{3collin-leq6})]
Suppose initially that $|A|=6$.

    We split cases according to whether the following property holds or not. Set
   $$T: \text{ the line through $a_1, a_2, a_3$ contains another point of $P\setminus A$, say $q_1$.}$$

\noindent \textbf{Case I:} $T$ holds. Then we need to find a matching of $\{a_1, a_2, a_3\}$ and $\{b_1, b_2, b_3\}$ in $P$.
By (\ref{nobadquadruple}), there is no bad quadruple. So, by \Cref{3smatch-withconds} there is a matching of $\{a_1, a_2, a_3\}$ and $\{b_1, b_2, b_3\}$ in $P \setminus \{q_1\}$. But evidently such a matching is a matching in $P$ as well, since $q_1$ collinear with $a_1,a_2,a_3$.

\noindent \textbf{Case II:} $T$ does not hold. In this case, we observe that all points in $P\setminus \{a_1, a_2, a_3\}$ are contained in two lines. Indeed, if not, then by (\ref{cond:0}) we would need one line to contain three points among $P\setminus \{a_1, a_2, a_3\}$ and one of the elements in $\{a_1, a_2, a_3\}$, making the line through $\{a_1, a_2, a_3\}$ into an ordinary 3-line, contradicting (\ref{set1-4collin}).

\textbf{Subcase II(a):} Every $l_{b_i b_j}$ contains some $q_k$. In this case, we claim that $b_1, b_2, b_3$ are collinear. Indeed, since $\{b_1, b_2, b_3, q_1, q_2, q_3\}$ are contained in two lines, any line passing through three points from this set will be among those two lines. $l_{b_1 b_2}$, $l_{b_1 b_3}$ and $l_{b_1 b_2}$ are three such lines. Thus, they may not be pairwise distinct. So, $b_1, b_2, b_3$ are collinear.

Now, we are in a situation where $b_1, b_2, b_3$ are three collinear points in $A$, such that property $T$ holds for them in place of $a_1, a_2, a_3$. By swapping their role, we are done by Case I.

\textbf{Subcase II(b):} Some $l_{b_i b_j}$ does not contain any $q_k$. In this case, we can use the line through the $a_1, a_2, a_3$. We are left with three points, $b_1, b_2, b_3$, which we need to isolate from a set of six points with two lines. Take also the line $l_{b_i b_j}$. We are left with one point only and we can use the final line.

It remains to consider the case section $|A|=5$. Now, if we have a set $A$ with $|A|=5$ and a set of 3 points in $A$ collinear, $\{a_1,a_2,a_3\}$ (letting $A:=\{a_1,a_2,a_3,b_1,b_2\}$ and $P\setminus A:= \{q_1,...,q_4\}$), we see that $A\cup \{q_4\}$ satisfies the original conditions. We therefore have a matching of $\{a_1,a_2,a_3\}$ and $\{b_1,b_2,q_4\}$ in $P$. So we may isolate $A$ using the lines in the matching containing $b_1$ and $b_2$, and a final line intersecting only the remaining point in $\{a_1,a_2,a_3\}$.
\end{proof}

\newpage
\section{Classification of Small Shatterable Sets}\label{app:pictures}
We conclude our paper by giving a somewhat more concrete version of the results in \Cref{sec:counting}. Indeed, in this section, we simply provide representatives for each of the isomorphism types of sets of maximal VC-dimension that can be shattered by $\F_2$ and $\F_3$.
\subsection{Sets Shattered by \texorpdfstring{$\F_2$}{F2}}
We exhibit the following sets that satisfy each subcase in the proof of \Cref{iso2}. 

\noindent\textbf{I:}
\begin{center}
\begin{tikzpicture}[scale=0.6]
\tikzset{dot/.style={fill=black,circle}}

{
\draw (0,0) -- (4,0);
\node at (-0.6,0) {$L_1$};
}

\node[above, right] at (0.2,0.2) {$a_1$};
\node[dot] at (0,0){};
\node[above, right] at (2.2,0.2) {$a_2$};
\node[dot] at (2,0){};
\node[above, right] at (4.2,0.2) {$a_3$};
\node[dot] at (4,0){};
\node[above, right] at (1.2,2.2) {$b_1$};
\node[dot] at (1,2){};
\node[above, right] at (3.2,2.2) {$b_2$};
\node[dot] at (3,2){};

\end{tikzpicture}      
\end{center}

\noindent\textbf{II:}
\begin{center}
\begin{tikzpicture}[scale=0.6]
\tikzset{dot/.style={fill=black,circle}}

{
\draw (0,0) -- (4,0);
\node at (-0.6,0) {$L_1$};
\draw (0,0) -- (0,4);
\node at (0,4.6) {$L_2$};
}

\node[above, right] at (0.2,0.2) {$a_1$};
\node[dot] at (0,0){};
\node[above, right] at (2.2,0.2) {$a_2$};
\node[dot] at (2,0){};
\node[above, right] at (4.2,0.2) {$a_3$};
\node[dot] at (4,0){};
\node[above, right] at (.2,2.2) {$b_1$};
\node[dot] at (0,2){};
\node[above, right] at (0.2,4.2) {$b_2$};
\node[dot] at (0,4){};

\end{tikzpicture}      
\end{center}

\subsection{Sets Shattered by \texorpdfstring{$\F_3$}{F3}}
We exhibit the following sets that satisfy each subcase in the proof of \Cref{iso3} - one may check using \Cref{3linesleft} and \Cref{3linesleft'} that each of these sets can, indeed be shattered by $\F_3$. 

\noindent\textbf{I(a):}
\begin{center}
\begin{tikzpicture}
\tikzset{dot/.style={fill=black,circle}}

{
\draw[red] (4,3) -- (8,3);
\node at (2.6,3) {$L_2$};
\draw[red] (2,2) -- (8,2);
\node at (0.6,2) {$L_1$};
\draw (2,2) -- (6,4);
\node at (2,1.4) {$l_1$};
\draw (4,2) -- (4,4);
\node at (4,1.4) {$l_2$};
\draw (6,2) -- (6,4);
\node at (6,1.4) {$l_3$};
\draw (8,2) -- (4,4);
\node at (8,1.4) {$l_4$};
\draw[red] (4,4) -- (6,4);
}

\node[above, right] at (2.2,2.2) {$b_1$};
\node[dot] at (2,2){};
\node[above, right] at (4.2,2.2) {$b_2$};
\node[dot] at (4,2){};
\node[above, right] at (6.2,2.2) {$b_3$};
\node[dot] at (6,2){};
\node[above, right] at (8.2,2.2) {$b_4$};
\node[dot] at (8,2){};
\node[above, right] at (3.4,3.2) {$a_1$};
\node[dot] at (4,3){};
\node[above, right] at (6.2,3.2) {$a_2$};
\node[dot] at (6,3){};
\node[above, right] at (8.2,3.2) {$a_3$};
\node[dot] at (8,3){};
\node[above, right] at (4.2,4.2) {$a_4$};
\node[dot] at (4,4){};
\node[above, right] at (6.2,4.2) {$a_5$};
\node[dot] at (6,4){};

\end{tikzpicture}      
\end{center}
\noindent\textbf{I(b):}
\begin{center}

\begin{tikzpicture}
\tikzset{dot/.style={fill=black,circle}}

{
\draw[red] (3,3) -- (7,3);
\node at (1.6,3) {$L_2$};
\draw[red] (2,2) -- (8,2);
\node at (0.6,2) {$L_1$};
\draw (2,2) -- (4,4);
\node at (2,1.4) {$l_1$};
\draw (4,2) -- (6,4);
\node at (4,1.4) {$l_2$};
\draw (6,2) -- (4,4);
\node at (6,1.4) {$l_3$};
\draw (8,2) -- (6,4);
\node at (8,1.4) {$l_4$};
\draw[red] (4,4) -- (6,4);
}

\node[above, right] at (2.3,2.2) {$b_1$};
\node[dot] at (2,2){};
\node[above, right] at (4.3,2.2) {$b_2$};
\node[dot] at (4,2){};
\node[above, right] at (6.3,2.2) {$b_3$};
\node[dot] at (6,2){};
\node[above, right] at (8.3,2.2) {$b_4$};
\node[dot] at (8,2){};
\node[above, right] at (3.3,3.2) {$a_1$};
\node[dot] at (3,3){};
\node[above, right] at (5.3,3.2) {$a_2$};
\node[dot] at (5,3){};
\node[above, right] at (7.3,3.2) {$a_3$};
\node[dot] at (7,3){};
\node[above, right] at (4.3,4.2) {$a_4$};
\node[dot] at (4,4){};
\node[above, right] at (6.3,4.2) {$a_5$};
\node[dot] at (6,4){};

\end{tikzpicture}      
\end{center}
\noindent\textbf{II(a):}
\begin{center}
\begin{tikzpicture}[scale=0.6]
\tikzset{dot/.style={fill=black,circle}}

{
\draw[red] (2,4) -- (11,7);
\draw[red] (2,4) -- (11,1);
\draw (5,5) -- (8,2);
\draw (5,3) -- (8,6);
\draw (11,7) -- (11,1);
\draw[red] (6,4) -- (11,5);
}
\node[above, right] at (2.2,4) {$b_1$};
\node[dot] at (2,4){};
\node[above, right] at (6.2,4) {$a_4$};
\node[dot] at (6,4){};
\node[above, right] at (5.2,5.4) {$a_1$};
\node[dot] at (5,5){};
\node[above, right] at (8.2,6.4) {$a_2$};
\node[dot] at (8,6){};
\node[above, right] at (11.2,7.2) {$a_3$};
\node[dot] at (11,7){};
\node[above, right] at (5.2,3.1) {$b_2$};
\node[dot] at (5,3){};
\node[above, right] at (8.2,2.2) {$b_3$};
\node[dot] at (8,2){};
\node[above, right] at (11.2,1.2) {$b_4$};
\node[dot] at (11,1){};
\node[above, right] at (11.2,5.2) {$a_5$};
\node[dot] at (11,5){};
\end{tikzpicture}
\end{center}

\noindent\textbf{II(b):}
\begin{center}
\begin{tikzpicture}
\tikzset{dot/.style={fill=black,circle}}

{
\draw[red] (2,1) -- (2,7);
\draw[red] (2,1) -- (8,1);
\draw[red] (2,5) -- (6,1);
\draw (2,7) -- (4,1);
\draw (8,1) -- (2,3);
}
\node[above, right] at (2.1,1.2) {$a_1$};
\node[dot] at (2,1){};
\node[above, right] at (2.1,3.2) {$b_1$};
\node[dot] at (2,3){};
\node[above, right] at (2.1,5.2) {$b_2$};
\node[dot] at (2,5){};
\node[above, right] at (2.1,7.2) {$b_3$};
\node[dot] at (2,7){};
\node[above, right] at (4.2,1.2) {$a_2$};
\node[dot] at (4,1){};
\node[above, right] at (6.2,1.2) {$a_3$};
\node[dot] at (6,1){};
\node[above, right] at (8.2,1.2) {$a_4$};
\node[dot] at (8,1){};
\node[above, right] at (3.2,4.2) {$b_4$};
\node[dot] at (3,4){};
\node[above, right] at (5.2,2.2) {$b_5$};
\node[dot] at (5,2){};
\end{tikzpicture}
\end{center}

\noindent\textbf{III:}
\begin{center}
\begin{tikzpicture}[scale=0.6]
\tikzset{dot/.style={fill=black,circle}}

{
\draw[red] (0,0) -- (6,0);
\draw[red] (1.5,3) -- (6,3);
\draw (3,0) -- (3,6);
\draw (0,0) -- (3,6);
\draw (0,0) -- (6,3);
\draw (3,3) -- (6,0);
\draw (6,3) -- (6,-6);
\draw (6,-6) -- (1.5,3);
\draw[red] (6,-6) -- (3,6);
}
\node[above, right] at (0.2,0.2) {$a_1$};
\node[dot] at (0,0){};
\node[above, right] at (3.2,0.2) {$a_2$};
\node[dot] at (3,0){};
\node[above, right] at (6.2,0.2) {$a_3$};
\node[dot] at (6,0){};
\node[above, right] at (1.7,3.2) {$b_1$};
\node[dot] at (1.5,3){};
\node[above, right] at (3.2,3.2) {$b_2$};
\node[dot] at (3,3){};
\node[above, right] at (6.2,3.2) {$b_3$};
\node[dot] at (6,3){};
\node[above, right] at (3.2,6.2) {$c_1$};
\node[dot] at (3,6){};
\node[above, right] at (6.2,-6.2) {$c_3$};
\node[dot] at (6,-6){};
\node[above, right] at (4.2,2.2) {$c_2$};
\node[dot] at (4,2){};
\end{tikzpicture}
\end{center}

\bibliographystyle{alpha}
\bibliography{bibliography}

\end{document}